\definecolor{red}{rgb}{1,0,0}
\definecolor{green}{rgb}{0,1,0}
\definecolor{blue}{rgb}{0,0,1}
\definecolor{refkey}{gray}{.625}
\definecolor{labelkey}{gray}{.625}
\let\oldmarginpar\marginpar
\renewcommand\marginpar[1]{\-\oldmarginpar[\raggedleft\footnotesize #1]%
{\raggedright\footnotesize #1}}
\def\dar[#1]{\ar@<2pt>[#1]\ar@<-2pt>[#1]}
\theoremstyle{plain}
\newtheorem{prop}{Proposition}[section]
\newtheorem{lem}[prop]{Lemma}
\newtheorem{cor}[prop]{Corollary}
\newtheorem{thm}[prop]{Theorem}
\newtheorem*{prop*}{Proposition}
\newtheorem*{lem*}{Lemma}
\newtheorem*{sublem*}{Sublemma}
\newtheorem*{cor*}{Corollaire}
\newtheorem*{thm*}{Theorem}
\newtheorem*{hypo*}{Hypothesis}
\newtheorem*{question*}{Question}
\newtheorem*{conjecture*}{Conjecture}
\newtheorem*{scholum*}{Scholum}
\newtheorem{defn}[prop]{Definition}
\newtheorem*{defn*}{Definition}
\newtheoremstyle{slanted}
  {3pt}
  {3pt}
  {\slshape}
  {}
  {\bfseries}
  {.}
  {.5em}
  {}
\theoremstyle{slanted}
\newtheorem{ex}[prop]{Example}
\newtheorem*{ex*}{Example}
\newtheorem*{exs*}{Examples}
\newtheorem{rmk}[prop]{Remark}
\newtheorem*{rmk*}{Remark}
\newtheorem*{rmks*}{Remarks}
\newtheorem*{notation*}{Notation}
\theoremstyle{definition}
\newtheorem*{con*}{Construction}
\newtheorem*{note*}{Note}
\theoremstyle{remark}
\newtheorem*{warning*}{Warning}
\newtheorem*{shortnote*}{Note}
\newtheorem*{claim*}{Claim}
\newtheorem*{axiom*}{Axiom}
\DeclareMathOperator{\Ad}{Ad}
\DeclareMathOperator{\id}{id}
\DeclareMathOperator{\RealPart}{Re}
\DeclareMathOperator{\modular}{mod}
\DeclareMathOperator{\conjugate}{C}
\DeclareMathOperator{\rk}{rk}
\newcommand{\JJ}{\mathbb{J}} 
\newcommand{\CC}{\mathbb{C}}
\newcommand{\ZZ}{\mathbb{Z}}
\newcommand{\thalf}{\tfrac{1}{2}}
\newcommand{\half}{\frac{1}{2}}
\newcommand{\rond}{\circ}
\newcommand{\cc}[1]{\overline{#1}} 
\newcommand{\cinf}[1]{C^{\infty}(#1)}
\newcommand{\sections}[1]{\Gamma(#1)}
\newcommand{\XX}{\mathfrak{X}} 
\newcommand{\xto}[1]{\xrightarrow{#1}}
\newcommand{\isomorphism}{\cong}
\newcommand{\diese}{^{\sharp}}
\newcommand{\bemol}{^{\flat}}
\newcommand{\inv}{^{-1}}
\newcommand{\graded}{^{\scriptscriptstyle\bullet}}
\newcommand{\abs}[1]{\left\vert#1\right\vert}
\newcommand{\set}[1]{\left\{#1\right\}}
\newcommand{\st}{\;\text{s.t.}\;}
\newcommand{\clifford}[1]{\mathcal{C}(#1)}
\newcommand{\ba}[2]{[#1,#2]}
\newcommand{\bas}[2]{[#1,#2]_*}
\newcommand{\basH}[2]{[#1,#2]^H_*}
\newcommand{\ld}[1]{L_{#1}}
\newcommand{\ii}[1]{\iota_{#1}}
\newcommand{\duality}[2]{\langle #1 | #2\rangle}
\newcommand{\ip}[2]{(#1,#2)}
\newcommand{\lb}[2]{[#1,#2]}
\newcommand{\anchor}{a}
\newcommand{\anchors}{\anchor_*}
\newcommand{\dee}{d}
\newcommand{\dees}{\dee_*}
\newcommand{\deesH}{\dees^H}
\newcommand{\module}{\mathscr{L}}
\newcommand{\lad}{A}
\newcommand{\dal}{\lad^*}
\newcommand{\SqrtMinusOne}{i}
\newcommand{\LieDer}{L }
\newcommand{\Id}[1]{\id_{#1}}
\newcommand{\bX}{\boldsymbol{X}}
\newcommand{\bY}{\boldsymbol{Y}}
\newcommand{\bT}{\boldsymbol{T}}
\newcommand{\TX}{T\bX}
\newcommand{\TCX}{T_{\CC}\bX}
\newcommand{\TsCX}{T^*_{\CC}\bX}
\newcommand{\forms}[1]{\Omega^{#1}(\bX)}
\newcommand{\formss}[1]{\Omega^{#1}}
\newcommand{\tangent}[1]{T^{#1}\bX}
\newcommand{\cotangent}[1]{(T^{#1}\bX)^*}
\newcommand{\Toi}{\tangent{0,1}}
\newcommand{\Tio}{\tangent{1,0}}
\newcommand{\Tois}{\cotangent{0,1}}
\newcommand{\Tios}{\cotangent{1,0}}
\newcommand{\TYio}{T^{1,0}\bY}
\newcommand{\TYoi}{T^{0,1}\bY}
\newcommand{\NYio}{N^{1,0}\bY}
\newcommand{\KKK}{K}
\newcommand{\pd}{\partial}
\newcommand{\barpd}{\bar{\partial}}
\newcommand{\barpdH}{\bar{\partial}^H}
\newcommand{\TsX}{T^*\bX}
\newcommand{\EE}{\boldsymbol{E}}
\newcommand{\QAsH}{Q_{\dal_H}}
\newcommand{\QAsHHalf}{Q^{\thalf}_{\dal_H}}
\newcommand{\stdiso}{\tau}
\newcommand{\cliffact}{\cdot}
\newcommand{\lbsH}[2]{[#1,#2]_*^H}
\newcommand{\anchorsH}{\anchors^H}
\newcommand{\representation}{\nabla^H}
\newcommand{\bdee}{\breve{\dee}}
\newcommand{\bdeesH}{\bdee_*^H}
\newcommand{\DD}{\mathcal{D}}
\newcommand{\DDH}{\DD^H}
\newcommand{\altmap}{F}
\newcommand{\overlinethetasharp}{\overline{\theta}\diese}
\newcommand{\overlinethetaflat}{\overline{\theta}\bemol}
\newcommand{\overlineomegaflat}{\overline{\omega}\bemol}
\newcommand{\overlinepisharp}{\overline{\pi}\diese}
\newcommand{\Hsharp}{H\diese}
\newcommand{\overlineHsharp}{\overline{H}\diese}
\newcommand{\supperlb}[2]{\lfloor #1,#2\rfloor}
\newcommand{\dz}[1]{d z^{#1}}
\newcommand{\dzbar}[1]{d \bar{z}^{#1}}
\newcommand{\partialz}[1]{{\tfrac{\partial}{\partial{z^{#1}}}}}
\newcommand{\partialzbar}[1]{{\tfrac{\partial}{\partial{\bar{z}^{#1}}}}}
\newcommand{\anyform}{\lambda}
\newcommand{\picoeff}[1]{\pi^{#1}}
\newcommand{\thetacoeff}[2]{\theta^{#1}_{#2}}
\newcommand{\omegacoeff}[1]{\omega_{#1}}
\newcommand{\cover}{\mathcal{U}}
\newcommand{\algebroid}{B}
\newcommand{\anchorB}{\anchor_{\algebroid}}
\newcommand{\QB}{Q_{\algebroid}}
\newcommand{\QBhalf}{\QB^{\half}}
\newcommand{\dL}{\mathrm{d}_{\algebroid}}
\newcommand{\dLNonzero}{\mathrm{\widetilde{d}}_{\algebroid}}
\newcommand{\ssheaf}[1]{\mathcal{S}^{#1} }
\newcommand{\sxsheaf}[1]{\mathcal{\widetilde{S}}^{#1} }
\newcommand{\schech}[2]{\check{C}^{#1}(\cover;{\ssheaf{#2}})}
\newcommand{\sxchech}[2]{\check{C}^{#1}(\cover;{\sxsheaf{#2}})}
\newcommand{\cechdelta}{\delta}
\newcommand{\cohomology}[1]{\mathrm{H}^{#1}}
\newcommand{\tildecohomology}[1]{\mathrm{\widetilde{H}}^{#1}}
\newcommand{\homology}[1]{\mathrm{H}_{#1}}
\renewcommand{\AA}{\mathscr{A}}
\newcommand{\MC}{Maurer-Cartan }
\newcommand{\LP}{Lichnerowicz-Poisson }
\newcommand{\KB}{Koszul-Brylinski }
\newcommand{\Cech}{\v{C}ech }
\newcommand{\hFF}{\mathcal{O}_{\bX}}
\begin{document}

\title[M-C Elements on Complex Manifolds]{Geometry of Maurer-Cartan Elements \\ on Complex Manifolds}
\author[Chen]{Zhuo Chen}
\thanks{Research partially supported by NSFC grant 10871007.}
\address{Tsinghua University, Department of Mathematics}
\email{\href{mailto:zchen@math.tsinghua.edu.cn}{\texttt{zchen@math.tsinghua.edu.cn}}}
\author[Sti\'enon]{Mathieu Sti\'enon}
\address{Universit\'e Paris Diderot, Institut de Math\'ematiques de Jussieu}
\email{\href{mailto:stienon@math.jussieu.fr}{\texttt{stienon@math.jussieu.fr}}}
\author[Xu]{Ping Xu}
\thanks{Research partially supported by NSF grants DMS-0605725 and DMS-0801129.}
\address{Pennsylvania State University, Department of Mathematics}
\email{\href{mailto:ping@math.psu.edu}{\texttt{ping@math.psu.edu}}}
\subjclass[2000]{15A66, 17B63, 17B66, 53C15, 53D17, 55R65, 58A10}

\begin{abstract}
The semi-classical data attached to stacks of algebroids in the
sense of Kashiwara and Kontsevich are \MC elements on complex
manifolds, which we call extended Poisson structures as they
generalize holomorphic Poisson structures. A canonical Lie algebroid
is associated to each \MC element. We study the geometry underlying
these \MC elements in the light of Lie algebroid theory. In
particular, we extend \LP cohomology and \KB homology to the realm
of extended Poisson manifolds; we establish a sufficient criterion
for these to be finite dimensional; we describe how homology and
cohomology are related through the Evens-Lu-Weinstein duality
module; and we describe a duality on \KB homology, which generalizes
the Serre duality of Dolbeault cohomology.
\end{abstract}

\maketitle

\tableofcontents

\section{Introduction}
Due to their close connection to mirror symmetry, noncommutative 
deformations of complex manifolds have recently generated
 increasing interest~\cites{MR1855264,Bondal}.
Kashiwara-Kontsevich's stacks of algebroids are one way of substantiating the abstract concept of quantum complex manifolds (or noncommutative deformations of complex manifolds)~\cites{MR1855264,MR1384750,KS1,KS2,MR2348030,Tsygan1,Y1}.
The quantization of the sheaf of holomorphic functions $\hFF$ of a complex manifold $\bX$ may no longer produce a sheaf of algebras but, instead, 
lead to a nonabelian gerbe over the complex manifold
 $\bX$~\cites{MR2348030,Y2} or, in Kontsevich's terminology, a stack
 of algebroids.
Roughly speaking, an algebroid \textit{\`a la} Kontsevich consists of an open cover $\{U_i\}_{i\in I}$ of the complex manifold $\bX$,
a sheaf of associative unital algebras $\AA_i$ on each $U_i$,
 an isomorphism of algebras $g_{ij}:\AA_j|_{U_{ij}}\to\AA_i|_{U_{ij}}$
 for each nonempty intersection $U_{ij}$, and an invertible element
 $a_{ijk}\in \Gamma (U_{ijk}, \AA_i^{\times} )$
 for each triple intersection
 $U_{ijk}$. The isomorphisms $g_{ij}$ do not satisfy the usual cocycle condition. Instead, the equations $g_{ij}\circ g_{jk}\circ g_{ki}=\Ad_{a^{-1}_{ijk}}$ are satisfied as well as other compatibility conditions (among which a ``tetrahedron equation'').
In the terminology of~\cite{LSX:adv}, an algebroid \textit{\`a la} Kontsevich would be described as an extension of a \Cech groupoid by algebras.
A stack of algebroids can be thought of as a Morita equivalence class (see~\cite{LSX:adv}) of algebroids.
A canonical abelian category of coherent sheaves can be defined on a quantum complex manifold using its stack of algebroids description~\cites{MR1855264,MR1384750,KS1,KS2}.

It is well known that the semi-classical data attached to quantum real manifolds (i.e. star-algebras) are Poisson structures~\cites{MR0496157,MR0496158}. The cotangent bundle of a real Poisson manifold $(M,\pi)$ is endowed with a canonical Lie algebroid structure denoted by $(T^*M)_{\pi}$. This Lie algebroid structure plays a central role in Poisson geometry.
For instance, the Lichnerowicz-Poisson cohomology is simply the Lie algebroid cohomology of $(T^*M)_{\pi}$ with trivial coefficients.
Evens-Lu-Weinstein discovered a procedure for constructing a canonical module over a given Lie algebroid.
With the canonical module of $(T^*M)_{\pi}$ at hand, they interpreted Koszul-Brylinski homology as a Lie algebroid cohomology.  According to Kontsevich's formality theorem and Tsygan's chain formality theorem, the Hochschild cohomology and Hochschild homology of a star algebra are isomorphic to the Lichnerowicz-Poisson cohomology and Koszul-Brylinski homology of the underlying Poisson manifold.

In the context of complex geometry, the semiclassical data associated to
quantum complex manifolds are solutions of the Maurer-Cartan equation
in the derived global sections $R\sections{ X,\wedge^\bullet \TX[1]}$
of the sheaf of graded Lie algebras $\wedge^\bullet \TX[1]$ of polyvector fields on $\bX$,
which, according to Kontsevich's formality theorem, classify the deformations
of stacks of algebroids up to gauge transformations~\cites{MR1855264,Y1,MR2348030}.
More precisely, a Maurer-Cartan element is an
\[ H=\pi+\theta+\omega
\in\formss{0,0}(\wedge^2 \tangent{1,0})\oplus\formss{0,1}(\wedge^1
\tangent{1,0})\oplus\formss{0,2}(\wedge^0 \tangent{1,0})
\] (where $\formss{0,p}(\wedge^q
\tangent{1,0}$) denotes the space of $\wedge^q\tangent{1,0}$-valued
$(0,p)$-forms on $\bX$) satisfying the following equations:
\begin{align*}
&\barpd \omega+\ba{\omega}{\theta}=0,  &\barpd \pi + \ba{\theta}{\pi}=0,  \\
&\barpd \theta+\ba{\omega}{\pi}+\thalf\ba{\theta}{\theta}=0,  &\ba{\pi}{\pi}=0.
\end{align*}

Holomorphic Poisson bivector fields are special cases of such \MC elements, as are holomorphic $(0,2)$-forms.
For this reason, complex manifolds endowed with such a
\MC element $H$ will be called extended Poisson manifolds. In a recent paper~\cite{arXiv:0903.5065}, one of the authors studied the Koszul-Brylinski homology of holomorphic Poisson manifolds, and established a duality on it using the general theory developed by Evens-Lu-Weinstein~\cite{MR1726784}.

In this paper, in order to study the geometry of extended Poisson
manifolds, we apply the Evens-Lu-Weinstein theory to complex Lie
algebroids. Indeed, considering Maurer-Cartan elements as
Hamiltonian operators (in the sense of~\cite{MR1472888}) deforming a
Lie bialgebroid~\cite{MR1262213}, we define a complex Lie algebroid,
which mimics the role played by the cotangent Lie algebroid in real
Poisson geometry. It is not surprising that, for a holomorphic
Poisson structure, this complex Lie algebroid is the derived Lie
algebroid of the holomorphic cotangent Lie algebroid $(\TsX)_{\pi}$,
i.e. the matched pair $\Toi\bowtie (\TsX)_{\pi}^{(1,0)}$ studied
in~\cites{MR2439547,arXiv:0903.5065}. Using this complex Lie
algebroid, we introduce a Lichnerowicz-Poisson cohomology and a
Koszul-Brylinski homology for extended Poisson manifolds, and study
the relation between them. We extend the notion of coisotropic
submanifolds of holomorphic Poisson manifolds to the ``extended''
setting. We give a criterion on the ellipticity
of the complex Lie algebroid (in the sense of Block~\cite{math/0509284})
  induced by a \MC element. 
And in the elliptic case, we obtain a duality, which we
call Evens-Lu-Weinstein duality, on the Koszul-Brylinski homology
groups. As was pointed out in~\cite{arXiv:0903.5065} for the
holomorphic Poisson case, this duality generalizes the Serre duality
on Dolbeault cohomology.

Note that, modulo gauge equivalences, our extended Poisson structures and
 Yekutieli's Poisson deformations (see~\cite{Y1}) are equivalent.
 It  would be interesting to explore the connection between our results
 on Poisson homology and Berest-Etingof-Ginzburg's~\cite{MR2034924}. It
 would  also  be  interesting to
investigate if one can extend the method in this paper
to study the  Bruhat-Poisson structures of Evens-Lu on
 flag varieties  \cite{EL:flag}
and the toric Poisson structures of Caine \cite{Caine}.

\textbf{Acknowledgments.}
We would like to thank Penn State University (Chen),  ETH~Zurich (Xu) and 
Peking University (Sti\'enon and Xu) for their hospitality while work on this project was being done.
We also wish to thank many people for useful discussions and comments, including Camille Laurent-Gengoux, Giovanni Felder, Jiang-Hua Lu, Pierre Schapira and Alan Weinstein.

\section{Preliminaries}

\subsection{Lie bialgebroids}

A complex Lie algebroid~\cite{arXiv:math/0601752} consists of a complex vector bundle $\lad\to M$, 
a bundle map
$\anchor:\lad\to T_\CC M$ called anchor, and a Lie algebra bracket
$\lb{\cdot}{\cdot}$ on the space of sections $\sections{\lad}$ such
that $\anchor$ induces a Lie algebra homomorphism from
$\sections{\lad}$ to $\XX_\CC(M)$ and the Leibniz rule
\begin{equation*}\label{1} \lb{u}{fv}=\big(\anchor(u)f\big)v+f\lb{u}{v} \end{equation*}
is satisfied for all $f\in\cinf{M,\CC}$ and $u,v\in\sections{\lad}$.

It is well-known that a Lie algebroid
$(\lad,\ba{\cdot}{\cdot},\anchor)$ is equivalent to a Gerstenhaber algebra
$(\sections{\wedge\graded\lad},\wedge,\lb{\cdot}{\cdot})$~\cite{MR1675117}.
On the other hand, for a Lie algebroid structure on a vector bundle $\lad$,
there is also a degree~1 derivation $\dee$ of the graded commutative algebra
$(\sections{\wedge\graded\dal},\wedge)$ such that $\dee^2=0$.
The differential $\dee$ is given by
\begin{multline*}
\label{m2} (\dee\alpha)(u_0,u_1,\cdots,u_n)=\sum_{i=0}^n (-1)^i
\anchor(u_i) \alpha(u_0,\cdots,\widehat{u_i},\cdots,u_n) \\
+ \sum_{i<j} (-1)^{i+j}
\alpha(\ba{u_i}{u_j},u_0,\cdots,\widehat{u_i},\cdots,\widehat{u_j},\cdots,u_n).
\end{multline*}
Indeed, a Lie algebroid structure on $\lad$ is also  equivalent to
a differential graded algebra $(\sections{\wedge\graded\dal},\wedge,\dee)$.

Let $\lad\to M$ be a complex vector bundle. Assume that $\lad$ and its dual $\dal$
both carry Lie algebroid structures with anchor maps $\anchor:\lad\to
T_\CC M$ and $\anchors:\dal\to T_\CC M$, brackets on sections
$\sections{\lad}\otimes_{\CC}\sections{\lad}\to\sections{\lad}:u\otimes
v\mapsto\ba{u}{v}$ and
$\sections{\dal}\otimes_{\CC}\sections{\dal}\to\sections{\dal}:\alpha\otimes
\beta\mapsto\bas{\alpha}{\beta}$, and differentials
$\dee:\sections{\wedge^{\bullet}\dal}\to\sections{\wedge^{\bullet+1}\dal}$
and
$\dees:\sections{\wedge^{\bullet}\lad}\to\sections{\wedge^{\bullet+1}\lad}$.

This pair of Lie algebroids $(\lad,\dal)$ is a Lie bialgebroid
~\cites{MR1362125,MR1746902,MR1262213} if $\dees$ is a
derivation of the Gerstenhaber algebra $(\sections{\wedge\graded
\lad},\wedge,\ba{\cdot}{\cdot})$ or, equivalently, if $\dee$ is a
derivation of the Gerstenhaber algebra $(\sections{\wedge\graded
\dal},\wedge,\bas{\cdot}{\cdot})$. Since the bracket
$\bas{\cdot}{\cdot}$ (resp.\ $\ba{\cdot}{\cdot}$) can be recovered
from the derivation $\dees$ (resp.\ $\dee$), one is led to the
following alternative definition.

\begin{prop}[\cite{MR1675117}]
A Lie bialgebroid $(\lad,\dal)$ is equivalent to a differential Gerstenhaber algebra
structure on $(\sections{\wedge\graded\lad},\wedge,\ba{\cdot}{\cdot},\dees)$
(or, equivalently, on $(\sections{\wedge\graded \dal},\wedge,\ba{\cdot}{\cdot}_*,\dee)$).
\end{prop}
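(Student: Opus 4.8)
The plan is to obtain the equivalence by gluing together the two structure-level dictionaries already recorded in this section, and then to observe that the compatibility axioms on the two sides are literally the same condition. On the one hand, a Lie algebroid structure on $\lad$ is the same datum as the Gerstenhaber algebra $(\sections{\wedge\graded\lad},\wedge,\ba{\cdot}{\cdot})$, by the cited result of~\cite{MR1675117}. On the other hand, dualizing the equivalence recalled above between a Lie algebroid structure on a vector bundle and the square-zero degree-one differential of the exterior algebra of its dual, a Lie algebroid structure on $\dal$ is the same datum as a degree-one derivation $\dees$ of the graded commutative algebra $(\sections{\wedge\graded\lad},\wedge)$ with $\dees^2=0$; the bracket $\bas{\cdot}{\cdot}$ on $\sections{\dal}$ is recovered from $\dees$ through the Chevalley--Eilenberg formula (the displayed formula for $\dee$, with the roles of $\lad$ and $\dal$ interchanged). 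Thus, forgetting compatibility, a pair of Lie algebroid structures on $(\lad,\dal)$ is precisely the data of a Gerstenhaber bracket $\ba{\cdot}{\cdot}$ together with a square-zero degree-one $\wedge$-derivation $\dees$ on $\sections{\wedge\graded\lad}$.

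With this identification in hand, both implications are immediate. For the forward direction, I would start from a Lie bialgebroid $(\lad,\dal)$: the Lie algebroid on $\lad$ supplies the Gerstenhaber algebra, the Lie algebroid on $\dal$ supplies the differential $\dees$, and the defining axiom of a Lie bialgebroid is exactly that $\dees$ be a derivation of the Gerstenhaber bracket. Since $\dees$ is automatically a derivation of $\wedge$ (being a Lie algebroid differential), this produces a differential Gerstenhaber algebra. For the converse, I would start from a differential Gerstenhaber algebra structure on $(\sections{\wedge\graded\lad},\wedge,\ba{\cdot}{\cdot},\dees)$: the Gerstenhaber part reconstructs the Lie algebroid on $\lad$, the square-zero $\wedge$-derivation $\dees$ reconstructs the Lie algebroid on $\dal$, and the requirement that $\dees$ be a derivation of $\ba{\cdot}{\cdot}$ is the bialgebroid compatibility. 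The two parenthetical descriptions in the statement are interchanged by the symmetry of the bialgebroid axiom, which (as recalled just before the proposition) asserts that $\dees$ is a derivation of $\ba{\cdot}{\cdot}$ if and only if $\dee$ is a derivation of $\bas{\cdot}{\cdot}$.

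The one point that deserves care is the bookkeeping of degrees and signs. In the Gerstenhaber algebra the bracket lowers degree by one while $\dees$ raises it by one, so ``$\dees$ is a derivation of $\ba{\cdot}{\cdot}$'' must be read with the Koszul signs
\[ \dees\ba{P}{Q}=\ba{\dees P}{Q}+(-1)^{|P|-1}\ba{P}{\dees Q} \]
for homogeneous $P,Q\in\sections{\wedge\graded\lad}$. I would verify that this is the same sign convention under which the differential-Gerstenhaber derivation axiom is phrased, so that no spurious discrepancy arises when passing between the two descriptions, and likewise check the signs in the $\wedge$-derivation rule that encode $\dees$ as the Chevalley--Eilenberg differential of $\dal$.

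The main, and essentially only, nontrivial content is not produced here but imported: the two dictionaries (Lie algebroid $\leftrightarrow$ Gerstenhaber algebra, and Lie algebroid $\leftrightarrow$ square-zero $\wedge$-derivation), whose verification amounts to translating the anchor, the bracket, and the Jacobi and Leibniz identities into the operator identities $\dees^2=0$ and the graded Leibniz rules. Granting those, the proposition is a formal repackaging, since the bialgebroid axiom and the differential-Gerstenhaber axiom are, after the identification of $\sections{\wedge\graded\dal}$-data with derivations of $\sections{\wedge\graded\lad}$, the very same condition on the very same pair of operations.
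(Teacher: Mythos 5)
Your argument is correct and coincides with what the paper does: the paper offers no independent proof, but the paragraph immediately preceding the proposition makes exactly your observation --- that the bracket $\bas{\cdot}{\cdot}$ (resp.\ $\ba{\cdot}{\cdot}$) is recoverable from $\dees$ (resp.\ $\dee$), so the bialgebroid compatibility axiom is literally the differential-Gerstenhaber derivation axiom --- and then cites \cite{MR1675117} for the substantive dictionaries you likewise import. Your sign bookkeeping for the degree $-1$ bracket against the degree $+1$ differential is the standard convention and raises no discrepancy.
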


\subsection{Hamiltonian operators}
Let $(\lad,\dal)$ be  a complex Lie bialgebroid, and $H\in\sections{\wedge^2\lad}$.
We now replace the differential
$\dees:\sections{\wedge\graded\lad}\to\sections{\wedge^{\bullet+1}\lad}$
by a twist by $H$:
\begin{equation}
\label{eq:dH}
\deesH:\sections{\wedge^\bullet\lad}\to\sections{\wedge^{\bullet+1}\lad}, \qquad \deesH u=\dees u+\ba{H}{u}.
\end{equation}
It follows from a simple verification that if $H$ satisfies the Maurer-Cartan equation:
\begin{equation}
\label{eq:Maure} d_{*}H +\thalf [H,H]=0,
\end{equation}
then $(\deesH)^2=0$ and
$(\sections{\wedge\graded\lad},\wedge,\ba{\cdot}{\cdot},\deesH)$
is again a differential Gerstenhaber algebra. Thus
one obtains a Lie bialgebroid $(\lad,\dal_H)$.
A solution $H\in\sections{\wedge^{2}\lad}$ to Eq.~\eqref{eq:Maure} is called a
\textbf{Hamiltonian operator}~\cite{MR1472888}.
The Lie algebroid structure on $\dal_H$ can be described
explicitly: the anchor and the Lie bracket are given, respectively, by
\[ \anchorsH=\anchors+\anchor\circ\Hsharp \] and
\[ \basH{\alpha}{\beta} =\bas{\alpha}{\beta}+[\alpha,\beta]_{H} .\]
Here
\[ [\alpha,\beta]_{H} =\LieDer_{\Hsharp(\alpha)}\beta
-\LieDer_{\Hsharp(\beta)}\alpha-\dees \duality{\Hsharp(\alpha )}{\beta} ,\]
for all $\alpha,\beta\in\sections{\dal}$. We shall use $\dal_H$
to denote such a Lie algebroid and call it the $H$-twisted
Lie algebroid of $\dal$.
Thus we obtain the following
theorem, which was first proved in~\cite{MR1472888}
by a different method.

\begin{thm}
If $(\lad,\dal)$ constitutes a Lie bialgebroid, and
$H\in\sections{\wedge^2\lad}$ is a Hamiltonian operator,
then $(\lad,\dal_H)$ is a Lie bialgebroid.
\end{thm}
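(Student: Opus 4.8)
The plan is to reduce everything to the differential Gerstenhaber algebra characterization supplied by the Proposition above. Since the Lie algebroid structure on $\lad$ is untouched by the twist, the Gerstenhaber algebra $(\sections{\wedge\graded\lad},\wedge,\ba{\cdot}{\cdot})$ is exactly the same as the one attached to $(\lad,\dal)$. Hence, by that Proposition, proving that $(\lad,\dal_H)$ is a Lie bialgebroid amounts to checking that $\deesH=\dees+\ba{H}{\cdot}$ endows this fixed Gerstenhaber algebra with the structure of a differential Gerstenhaber algebra. Concretely, I would verify three things: that $\deesH$ is a degree~$1$ derivation of the wedge product, that $(\deesH)^2=0$, and that $\deesH$ is a derivation of the bracket $\ba{\cdot}{\cdot}$.

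The first point is immediate: $\dees$ is a derivation of $\wedge$ because it is the Lie algebroid differential of $\dal$, while $\ba{H}{\cdot}$ is a derivation of $\wedge$ by the biderivation axiom of the Gerstenhaber algebra on $\lad$ (with $H\in\sections{\wedge^2\lad}$, the bracket sends $\wedge^k$ into $\wedge^{k+1}$, so $\ba{H}{\cdot}$ raises the wedge-degree by one). For the second point, I would expand
\[
(\deesH)^2 u=\dees\dees u+\dees\ba{H}{u}+\ba{H}{\dees u}+\ba{H}{\ba{H}{u}}.
\]
Here $\dees\dees u=0$ since $\dees$ is a differential. Because $(\lad,\dal)$ is a Lie bialgebroid, $\dees$ is a derivation of $\ba{\cdot}{\cdot}$, and since $H$ has bracket-degree~$1$ this collapses the two middle terms to $\dees\ba{H}{u}+\ba{H}{\dees u}=\ba{\dees H}{u}$. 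The graded Jacobi identity applied to the last term gives $\ba{H}{\ba{H}{u}}=\thalf\ba{\ba{H}{H}}{u}$, so altogether $(\deesH)^2 u=\ba{\dees H+\thalf\ba{H}{H}}{u}$, which vanishes precisely by the Maurer-Cartan equation~\eqref{eq:Maure}. The third point follows the same pattern: $\dees$ is a derivation of the bracket by the bialgebroid hypothesis, and $\ba{H}{\cdot}$ is a derivation of the bracket by the graded Jacobi identity (again using that $H$ has bracket-degree~$1$, so the Koszul sign matches), whence their sum $\deesH$ is too.

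The only genuine obstacle is sign bookkeeping. The Gerstenhaber bracket carries a shifted grading, so the Leibniz and Jacobi signs must be tracked consistently, and one must check that the degree of $H$ as a bracket-element, namely $1$, places $\ba{H}{\cdot}$ in exactly the right degree to be simultaneously a derivation of $\wedge$ and of $\ba{\cdot}{\cdot}$. Once the convention is fixed, these two derivation properties of $\ba{H}{\cdot}$ are nothing but the Gerstenhaber axioms evaluated at the fixed element $H$, and the vanishing of $(\deesH)^2$ is the Maurer-Cartan equation in disguise. Having assembled the differential Gerstenhaber algebra $(\sections{\wedge\graded\lad},\wedge,\ba{\cdot}{\cdot},\deesH)$, the Proposition converts it back into the Lie bialgebroid $(\lad,\dal_H)$, which completes the argument.
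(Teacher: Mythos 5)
Your proposal is correct and follows exactly the route the paper takes: the paper reduces the claim to checking that $(\sections{\wedge\graded\lad},\wedge,\ba{\cdot}{\cdot},\deesH)$ is again a differential Gerstenhaber algebra and dismisses this as ``a simple verification,'' which is precisely the computation you carry out (derivation of $\wedge$, $(\deesH)^2=\ba{\dees H+\thalf\ba{H}{H}}{\cdot}=0$ via Maurer--Cartan, and derivation of the bracket via graded Jacobi). You have merely supplied the details the paper omits.
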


\section{Maurer-Cartan elements}

\subsection{The Lie bialgebroid stemming from a complex manifold}

We fix a complex manifold $\bX$ of complex dimension $n$
with almost complex structure $J$. We regard the tangent bundle
$\TX$ as a real vector bundle over $\bX$. The complexification of $\TX$
is denoted $\TCX$, namely: $\TCX=\TX\otimes\CC$. Similarly,
$\TsCX=\TsX\otimes\CC$. Let $\JJ:\TCX\to\TCX$ be the
$\CC$-linear extension of the almost complex structure $J$, and
$\Tio$ and $\Toi$ its $+\SqrtMinusOne$ and $-\SqrtMinusOne$
eigenbundles, respectively.
We adopt the following notations:
\[ \tangent{p,q}=\wedge^p\Tio\otimes\wedge^q\Toi ,\]
\[ \cotangent{p,q}=\wedge^p\Tios\otimes\wedge^q\Tois .\]

Consider the following two vector bundles which are obviously
mutually dual:
\begin{equation}\label{Eqt:defnAbdAbds}
\lad=\Tio\oplus\Tois ,\quad \dal=\Toi\oplus\Tios .\end{equation}
We can endow $A$ with a complex Lie algebroid structure. 
The anchor is the projection onto the first component: 
\[ a(\partialz{i})=\partialz{i} \qquad a(d\cc{z_j})=0 .\] 
The bracket of two sections of $\Tio$ is their bracket as vector fields; 
the bracket of any pair of sections of $\Tois$ is zero; and the bracket of a holomorphic vector field (i.e. a holomorphic section of the holomorphic vector bundle $\Tio$) and an anti-holomorphic 1-form (i.e. an anti-holomorphic section of the holomorphic vector bundle $\Tois$) is also zero. 
Thus \[ [\partialz{i},\partialz{j}]=0, \qquad [d\cc{z_i},d\cc{z_j}]=0, \quad \text{and} \quad [\partialz{i},d\cc{z_j}]=0 .\] 
Together with the Leibniz rule, the above three rules completely determine 
the bracket of any two arbitrary sections of $A$.
Similarly, one endows $\dal$
with a complex Lie algebroid structure as well.
It is simple to see that $(\lad,\dal)$ constitutes a Lie bialgebroid.
Indeed $A$ and $A^*$ are transversal Dirac structures of the
Courant algebroid $\TCX\oplus\TsCX$, for they are the eigenbundles 
of the generalized complex structure on $\bX$ induced by its complex manifold structure~\cites{MR2013140,GualtieriThesis}. In the sequel we will use the symbols 
\begin{equation} \Tio\bowtie\Tois \quad \text{and} \quad \Toi\bowtie\Tios \end{equation}
to refer to $\lad$ and $\dal$ when seen as Lie algebroids~\cite{MR2439547}. 

Moreover, one has
\begin{gather*}
\wedge^k \lad\cong \bigoplus_{i+j=k} \tangent{i,0} \otimes \cotangent{0,j} ,\\
\wedge^k \dal\cong \bigoplus_{i+j=k} \tangent{0,i}\otimes \cotangent{j,0} .
\end{gather*}

The Lie algebroid differentials associated to the Lie
algebroid structures on $\dal$ and $\lad$ are the
usual $\barpd$- and $\pd$-operators, respectively:
\begin{gather*}
\dees=\barpd:~ \formss{0,j} (\tangent{i,0}) \to \formss{0,j+1} (\tangent{i,0}) ,\\
\dee=\pd:~ \formss{j,0} (\tangent{0, i}) \to \formss{j+1,0} (\tangent{0, i}) .
\end{gather*}

\subsection{Extended Poisson structures}

\begin{defn}
An \textbf{extended Poisson manifold} $(\bX,H)$ is a complex manifold
$\bX$ equipped with an $H\in\sections{\wedge^2\lad}$ which is an
Hamiltonian operator with respect to $(\lad,\dal)$, i.e.
\begin{equation}\label{Eqt:ExtendedMC}
\barpd H+\thalf\ba{H}{H}=0 .\end{equation}
In this case, $H$ is called an extended Poisson structure.
\end{defn}

Any $H\in\sections{\wedge^2\lad}$ decomposes as
\[ H=\pi+\theta+\omega ,\] where
$\pi\in\sections{\tangent{2,0}}$,
$\theta\in\sections{\Tio\otimes \Tois}$
and $\omega\in\sections{\cotangent{0,2}}$.
We will use the following notations to denote
the bundle maps induced by natural contraction:
\begin{align*}
& \theta\bemol:~\Toi\to\Tio ,\\
& \theta\diese:~\Tios\to\Tois ,\\
& \pi\diese:~\Tios\to\Tio ,\\
& \omega\bemol:~\Toi\to\Tois .
\end{align*}
Note that $\theta\diese=-(\theta\bemol)^*$.

The following lemma is immediate.

\begin{lem}
An element $H=\pi+\theta+\omega$ is an extended Poisson structure 
if and only if the following equations are satisfied:
\begin{align}
&\barpd \omega+\ba{\omega}{\theta}=0, \label{eq:omega}\\
&\barpd \theta+\ba{\omega}{\pi}+\thalf\ba{\theta}{\theta}=0, \label{eq:theta}\\
&\barpd \pi + \ba{\theta}{\pi}=0, \label{eq:pi1}\\
&\ba{\pi}{\pi}=0. \label{eq:pi2}
\end{align}
\end{lem}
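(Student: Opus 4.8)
The plan is to exploit the bigrading on the exterior algebra of $\lad$. By the isomorphism $\wedge^k\lad\cong\bigoplus_{i+j=k}\tangent{i,0}\otimes\cotangent{0,j}$ recalled above, the three pieces of $H=\pi+\theta+\omega$ are precisely the components of $H\in\sections{\wedge^2\lad}$ of bidegree $(2,0)$, $(1,1)$ and $(0,2)$, where we call $(i,j)$ the bidegree of a section of $\tangent{i,0}\otimes\cotangent{0,j}$. Both $\barpd H$ and $\thalf\ba{H}{H}$ lie in $\sections{\wedge^3\lad}=\bigoplus_{i+j=3}\sections{\tangent{i,0}\otimes\cotangent{0,j}}$, whose four summands carry bidegrees $(3,0)$, $(2,1)$, $(1,2)$ and $(0,3)$. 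Since the \MC equation~\eqref{Eqt:ExtendedMC} is a single identity in this space, it is equivalent to the vanishing of each of its four bidegree components, so the whole argument reduces to sorting the terms of $\barpd H+\thalf\ba{H}{H}$ by bidegree.

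First I would record how $\barpd$ and the Gerstenhaber bracket act on bidegrees. Since $\dees=\barpd$ maps $\formss{0,j}(\tangent{i,0})$ to $\formss{0,j+1}(\tangent{i,0})$, it is bihomogeneous of bidegree $(0,1)$; hence $\barpd\pi$, $\barpd\theta$ and $\barpd\omega$ have bidegrees $(2,1)$, $(1,2)$ and $(0,3)$ respectively. For the bracket, the key observation is that it is bihomogeneous of bidegree $(-1,0)$, i.e.\ it sends a pair of bidegrees $(a,b)$ and $(c,d)$ to $(a+c-1,\,b+d)$. This follows from the explicit description of the Lie algebroid $\lad=\Tio\bowtie\Tios$: the anchor vanishes on $\Tios$ and the bracket of two sections of $\Tios$ is zero, so the bracket ``consumes'' exactly one holomorphic vector factor while leaving the antiholomorphic form degree unchanged, and the general claim then follows from the derivation (Leibniz) property of the Schouten--Gerstenhaber bracket. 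Consequently $\ba{\pi}{\pi}$ has bidegree $(3,0)$; $\ba{\theta}{\pi}$ has $(2,1)$; $\ba{\omega}{\pi}$ and $\ba{\theta}{\theta}$ both have $(1,2)$; $\ba{\omega}{\theta}$ has $(0,3)$; and $\ba{\omega}{\omega}$ would have the impossible bidegree $(-1,4)$, hence vanishes identically.

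It then remains to expand and collect. Because $\pi$, $\theta$ and $\omega$ all have degree $2$ in $\wedge\graded\lad$, the graded symmetry of the bracket gives $\ba{P}{Q}=\ba{Q}{P}$ for any two of them, whence $\ba{H}{H}=\ba{\pi}{\pi}+\ba{\theta}{\theta}+2\ba{\theta}{\pi}+2\ba{\omega}{\pi}+2\ba{\omega}{\theta}$. Grouping $\barpd H+\thalf\ba{H}{H}$ according to the four bidegrees now yields one equation per component: bidegree $(3,0)$ gives $\thalf\ba{\pi}{\pi}=0$, i.e.~\eqref{eq:pi2}; bidegree $(2,1)$ gives $\barpd\pi+\ba{\theta}{\pi}=0$, i.e.~\eqref{eq:pi1}; bidegree $(1,2)$ gives $\barpd\theta+\ba{\omega}{\pi}+\thalf\ba{\theta}{\theta}=0$, i.e.~\eqref{eq:theta}; and bidegree $(0,3)$ gives $\barpd\omega+\ba{\omega}{\theta}=0$, i.e.~\eqref{eq:omega}. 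I expect the only point demanding genuine care to be the verification of the bihomogeneity of the bracket together with the sign bookkeeping in its graded symmetry; once these are in place, the equivalence of~\eqref{Eqt:ExtendedMC} with \eqref{eq:omega}--\eqref{eq:pi2} is immediate.
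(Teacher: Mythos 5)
Your proof is correct: the paper states this lemma without proof (calling it ``immediate''), and the bidegree bookkeeping you carry out --- $\barpd$ of bidegree $(0,1)$, the Schouten bracket of bidegree $(-1,0)$ on $\wedge\graded\lad\cong\bigoplus\tangent{i,0}\otimes\cotangent{0,j}$, symmetry of the bracket on degree-$2$ elements, and the vanishing of $\ba{\omega}{\omega}$ --- is exactly the verification the authors leave to the reader. Nothing is missing.
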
\label{Lem:MCEqtSpellOut}

\begin{rmk}

When only one of the three terms of $H$ is not zero, 
we are left with one of the following three
special cases:
\begin{enumerate}
\item $H=\pi$ is an extended Poisson if and only if
 $\pi$ is a
holomorphic Poisson bivector field.
\item $H=\theta$ is an extended Poisson  if and only if
$\barpd\theta+\thalf\ba{\theta}{\theta}=0$. Moreover, if
$\overlinethetaflat\circ\theta\bemol-\id$ is invertible,
$\theta$ is equivalent to a deformed complex structure
\cite{MR2109686}.
\item $H=\omega$ is an extended Poisson if and only if $\barpd\omega=0$.
\end{enumerate}

In fact, if $\ba{\omega}{\pi}=0$, Eq.~\eqref{eq:theta} implies that
$\theta$ defines a deformed complex structure (under the assumption
that $\overlinethetaflat\circ\theta\bemol-\id$ is invertible).
Then, according to Lemma~\ref{Zurich} below,
Eq.~\eqref{eq:omega} is equivalent to
$\bar{\partial}_\theta\omega=0$,
where $\barpd_\theta=\barpd+[\theta,\cdot]$, 
and Eqs.~\eqref{eq:pi1}-\eqref{eq:pi2} mean 
that $\pi$ is a holomorphic Poisson tensor with
respect to the deformed complex structure.
\end{rmk}

\begin{cor}\label{Cor:Hminus}
If $H=\pi+\theta+\omega$ is an extended Poisson structure, then so is
\[ \lambda\pi+\theta+\lambda\inv\omega ,\] for any $\lambda\in\CC^\times$.
In particular, \[ H^{\vee}=-\pi+\theta-\omega \] is an extended Poisson structure.
\end{cor}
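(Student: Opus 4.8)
The plan is to exploit the fact that the four equations of Lemma~\ref{Lem:MCEqtSpellOut} are homogeneous with respect to the grading on $\sections{\wedge\graded\lad}$ that counts the number of $\Tio$-legs. Writing $\sections{\wedge^k\lad}=\bigoplus_{i+j=k}\formss{0,j}(\tangent{i,0})$, call $i$ the \emph{vector degree}. Then $\pi$, $\theta$ and $\omega$ have vector degrees $2$, $1$ and $0$ respectively, so the rescaling $H\mapsto\lambda\pi+\theta+\lambda\inv\omega$ is precisely the operator $\Phi_\lambda$ that multiplies a homogeneous element of vector degree $i$ by $\lambda^{i-1}$. The whole statement will then follow from the single fact that $\Phi_\lambda$ intertwines both $\barpd$ and the Schouten bracket.

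First I would record how $\barpd$ and the bracket interact with the vector degree. Since $\barpd=\dees$ adds an antiholomorphic form leg without touching the $\Tio$-part, it preserves vector degree, whence $\barpd\circ\Phi_\lambda=\Phi_\lambda\circ\barpd$. The bracket is checked on generators: $\ba{X}{Y}\in\sections{\Tio}$ for $X,Y\in\sections{\Tio}$, while $\ba{X}{\alpha}\in\sections{\Tios}$ and $\ba{\alpha}{\beta}=0$ for $\alpha,\beta\in\sections{\Tios}$; in each case the vector degree of the output is the sum of the input vector degrees minus one. Because the Gerstenhaber bracket extends as a biderivation of $\wedge$, this shows $\ba{\cdot}{\cdot}$ is homogeneous of vector degree $-1$ on all of $\sections{\wedge\graded\lad}$, and therefore $\ba{\Phi_\lambda u}{\Phi_\lambda v}=\Phi_\lambda\ba{u}{v}$ for all $u,v$ (using bilinearity to reduce to homogeneous arguments).

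With these two intertwining relations the corollary is immediate: applying $\Phi_\lambda$ to the Maurer-Cartan element and using that $\Phi_\lambda$ commutes with $\barpd$ and with the bracket gives
\begin{equation*}
\barpd(\Phi_\lambda H)+\thalf\ba{\Phi_\lambda H}{\Phi_\lambda H}=\Phi_\lambda\Big(\barpd H+\thalf\ba{H}{H}\Big)=\Phi_\lambda(0)=0 ,
\end{equation*}
so $\lambda\pi+\theta+\lambda\inv\omega=\Phi_\lambda H$ is again an extended Poisson structure, and taking $\lambda=-1$ yields $H^{\vee}=-\pi+\theta-\omega$. Equivalently, and just as quickly, one can bypass the grading language and verify Eqs.~\eqref{eq:omega}--\eqref{eq:pi2} directly: each term of a given equation carries the same net power of $\lambda$ (for instance the two nonconstant summands of Eq.~\eqref{eq:theta} both contribute $\lambda^0$, since $\ba{\lambda\inv\omega}{\lambda\pi}=\ba{\omega}{\pi}$), so the equations for $\lambda\pi+\theta+\lambda\inv\omega$ are scalar multiples of the original ones and hold automatically.

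The only point requiring any care --- and hence the ``main obstacle,'' such as it is --- is the bookkeeping in the second paragraph: one must confirm that the Schouten bracket genuinely lowers the vector degree by exactly one, so that no cross-terms of the wrong degree appear, and that the constant scalars $\lambda$ pull out of the bracket, which they do because $\ba{\cdot}{\cdot}$ is $\CC$-bilinear (the Leibniz rule contributes nothing for constant coefficients). Everything else is formal.
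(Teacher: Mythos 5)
Your proof is correct and follows essentially the same route the paper intends: Corollary~\ref{Cor:Hminus} is stated as an immediate consequence of the component equations \eqref{eq:omega}--\eqref{eq:pi2}, and your direct check that each equation is rescaled by an overall power of $\lambda$ (with the cross-term $\ba{\lambda\inv\omega}{\lambda\pi}=\ba{\omega}{\pi}$ unchanged) is exactly that argument; the grading operator $\Phi_\lambda$ is a clean repackaging of the same observation.
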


Note that Maurer-Cartan elements as deformations of Lie bialgebroids
or differential Gerstenhaber algebras were already considered by Cleyton-Poon~\cite{CP} 
in their study of nilpotent complex structures on real six-dimensional nilpotent algebras.

A natural question is: when will $(\lad,\dal_H)$ arise from a
generalized complex structure in the sense of Hitchin
\cites{MR2013140,GualtieriThesis}? Let us recall the following:
\begin{lem}(Lemma 6.1 in~\cite{arxivmath0702718})
The graph $\{H\diese\xi+\xi\in\lad\oplus \dal\}$ of $H$, 
which is clearly isomorphic to $\dal_H$ as a vector bundle, 
is the $+\SqrtMinusOne$- (or $-\SqrtMinusOne$-) eigenbundle 
of a generalized complex structure on $\bX$ if and only if
$\overlineHsharp \circ \Hsharp-\Id{\dal}$ is invertible. 
Here the map $\overlineHsharp:\lad\to\dal$ is defined by 
$\overlineHsharp(u)=\overline{\Hsharp(\overline{u})}$, $\forall u\in\lad$.
\end{lem}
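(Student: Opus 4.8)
The plan is to view the ambient bundle $\lad\oplus\dal$ as the standard complexified Courant algebroid $\TCX\oplus\TsCX$---legitimate because, as noted above, $\lad$ and $\dal$ are transversal Dirac structures of $\TCX\oplus\TsCX$ with $\dees=\barpd$---and then to apply the standard criterion of Hitchin and Gualtieri \cites{MR2013140,GualtieriThesis}: a complex subbundle $L\subseteq\TCX\oplus\TsCX$ is the $+\SqrtMinusOne$-eigenbundle of a generalized complex structure exactly when $L$ is maximal isotropic, involutive under the Courant bracket, and transversal to its conjugate in the sense that $L\cap\cc L=0$. Writing $L=\set{\Hsharp\xi+\xi:\xi\in\dal}$ for the graph of $\Hsharp\colon\dal\to\lad$, I would show that the first two properties hold for free (the statement tacitly assumes $H$ is a Hamiltonian operator, since $\dal_H$ must make sense) and reduce the third to invertibility of $\overlineHsharp\circ\Hsharp-\Id{\dal}$.

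First I would dispose of maximal isotropy. For $\xi,\eta\in\dal$ the Courant pairing gives $\ip{\Hsharp\xi+\xi}{\Hsharp\eta+\eta}=\thalf\big(\duality{\xi}{\Hsharp\eta}+\duality{\eta}{\Hsharp\xi}\big)=\thalf\big(H(\xi,\eta)+H(\eta,\xi)\big)=0$, which vanishes because $H\in\sections{\wedge^2\lad}$ is skew; since $\rk L=\rk\dal=\thalf\rk(\lad\oplus\dal)$, the isotropic subbundle $L$ is maximal. For involutivity I would invoke the Dirac-structure interpretation of Hamiltonian operators \cite{MR1472888}: the graph of $\Hsharp$ is closed under the Courant bracket precisely when $\barpd H+\thalf\ba{H}{H}=0$, and its restricted bracket is then the Lie algebroid structure of $\dal_H$. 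As $H$ is an extended Poisson structure this equation holds, so $L$ is a complex Dirac structure.

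It then remains to analyze $L\cap\cc L$, and the crux of the argument is to identify $\cc L$. Complex conjugation on $\TCX\oplus\TsCX$ interchanges $\Tio\leftrightarrow\Toi$ and $\Tios\leftrightarrow\Tois$, hence swaps the two Lagrangian halves $\lad=\Tio\oplus\Tois$ and $\dal=\Toi\oplus\Tios$. Thus for $\xi\in\dal$ we have $\cc\xi\in\lad$ and $\cc{\Hsharp\xi}\in\dal$; setting $u=\cc\xi$, the very definition $\overlineHsharp(u)=\cc{\Hsharp(\cc u)}$ yields $\cc{\Hsharp\xi+\xi}=u+\overlineHsharp(u)$, so that $\cc L=\set{u+\overlineHsharp(u):u\in\lad}$ is the graph of $\overlineHsharp\colon\lad\to\dal$. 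An element of $\TCX\oplus\TsCX$ lies in $L\cap\cc L$ iff $\Hsharp\xi+\xi=u+\overlineHsharp(u)$ for some $u\in\lad$, $\xi\in\dal$; matching $\lad$- and $\dal$-components forces $u=\Hsharp\xi$ and $\xi=\overlineHsharp(u)=(\overlineHsharp\circ\Hsharp)(\xi)$, i.e.\ $(\overlineHsharp\circ\Hsharp-\Id{\dal})(\xi)=0$. Hence $L\cap\cc L\cong\ker(\overlineHsharp\circ\Hsharp-\Id{\dal})$, and $L\cap\cc L=0$ if and only if $\overlineHsharp\circ\Hsharp-\Id{\dal}$ is injective, equivalently---being an endomorphism of a finite-rank bundle---invertible. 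When this holds, $\cc L$ is the $-\SqrtMinusOne$-eigenbundle and the resulting endomorphism of $\TCX\oplus\TsCX$ is real, so $L$ genuinely comes from a generalized complex structure; both implications follow.

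I expect the one delicate point to be the bookkeeping that identifies $\cc L$ with the graph of $\overlineHsharp$: one must keep track of the fact that conjugation exchanges $\lad$ and $\dal$, so that the conjugate of the graph of a map $\dal\to\lad$ is the graph of a map $\lad\to\dal$---which is exactly the normalization $\overlineHsharp(u)=\cc{\Hsharp(\cc u)}$ built into the statement. Everything else is routine: isotropy and the intersection are elementary linear algebra, and involutivity is handed to us by the Maurer-Cartan equation through the Courant-algebroid/Dirac-structure formalism.
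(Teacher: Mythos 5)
Your argument is correct and is essentially the intended one: the paper itself gives no proof of this lemma (it is quoted verbatim from Lemma~6.1 of~\cite{arxivmath0702718}), and the expected argument is exactly your reduction to the Hitchin--Gualtieri criterion, with isotropy from skew-symmetry of $H$, involutivity from the Maurer--Cartan equation via the Liu--Weinstein--Xu Dirac-structure picture, and the identification of $\cc{L}$ with the graph of $\overlineHsharp\colon\lad\to\dal$ (using that conjugation swaps $\lad$ and $\dal$) so that $L\cap\cc{L}\cong\ker(\overlineHsharp\circ\Hsharp-\Id{\dal})$. No gaps.
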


Again we let $H=\pi+\theta+\omega$ be an extended Poisson structure
on $\bX$. Relatively to the direct sum decompositions \label{eq3} of $\lad$ and $\dal$, the endomorphisms $\Hsharp$ and $\overlineHsharp$ 
are represented by the block matrices
\[ \Hsharp= \left( \begin{matrix} \theta\bemol & \pi\diese \\ 
\omega\bemol & \theta\diese \end{matrix} \right) 
\quad \text{and} \quad 
\overlineHsharp= \left( \begin{matrix} \overlinethetaflat & \overlinepisharp \\ 
\overlineomegaflat & \overlinethetasharp \end{matrix} \right) .\]
In turn, we have
\begin{equation}
\label{Eqt:overlineHsharpcircHsharp}
\overlineHsharp  \Hsharp=\left(
\begin{matrix}
  \overlinethetaflat \circ\theta\bemol+ \overlinepisharp\circ\omega\bemol
  & \overlinethetaflat\circ \pi\diese+\overlinepisharp\circ\theta\diese \\
  \overlineomegaflat\circ \theta\bemol +\overlinethetasharp\circ
  \omega\bemol  & \overlineomegaflat\circ\pi\diese+
  \overlinethetasharp\circ \theta\diese \end{matrix} \right).
\end{equation}

\begin{prop} Given an extended Poisson manifold $(\bX,H)$, let $\lad=\Tio\bowtie\Tois$. 
Then $\dal_H$ is the $(\pm\SqrtMinusOne)$-eigenbundle of a generalized
complex structure if and only if $\overlineHsharp\Hsharp-\Id{\dal}$ is invertible.
\end{prop}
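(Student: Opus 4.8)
The plan is to reduce the statement to the Lemma quoted just above (Lemma~6.1 of~\cite{arxivmath0702718}) by realizing the twisted algebroid $\dal_H$ concretely as the graph of $H$ inside the complexified standard Courant algebroid. Recall that $\lad\oplus\dal$, regrouped as $(\Tio\oplus\Toi)\oplus(\Tios\oplus\Tois)=\TCX\oplus\TsCX$, is the standard Courant algebroid of $\bX$, in which $\lad$ and $\dal$ already sit as the two transversal Dirac structures attached to the complex structure. I would begin by fixing this identification together with its canonical symmetric pairing $\langle u+\alpha,\,v+\beta\rangle=\thalf(\duality{\alpha}{v}+\duality{\beta}{u})$, for $u,v\in\lad$ and $\alpha,\beta\in\dal$.

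Next I would exhibit $\dal_H$ as the graph $L=\set{\Hsharp\xi+\xi : \xi\in\dal}\subset\lad\oplus\dal$. The assignment $\xi\mapsto\Hsharp\xi+\xi$ is a vector-bundle isomorphism $\dal\to L$, since projecting onto the $\dal$-summand recovers $\xi$; this is the identification $L\cong\dal_H$ already recorded in the quoted Lemma. Two observations make $L$ a complex Dirac structure. First, because $H$ is a bivector one has $\duality{\eta}{\Hsharp\xi}=-\duality{\xi}{\Hsharp\eta}$, so $L$ is isotropic, and having half the rank of $\TCX\oplus\TsCX$ it is maximal isotropic. Second, the extended Poisson hypothesis enters precisely here: the \MC equation~\eqref{Eqt:ExtendedMC} is exactly the condition for the graph of $H$ to be involutive under the Courant bracket, so that $L$ is integrable and the vector-bundle isomorphism $\dal_H\cong L$ promotes to an isomorphism of Lie algebroids carrying $\anchorsH$ and $\basH{\cdot}{\cdot}$ to the Courant structure restricted to $L$.

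Once $\dal_H$ is identified with the complex Dirac structure $L$, the question becomes purely one about $L$: a maximal isotropic, involutive complex subbundle of $\TCX\oplus\TsCX$ is the $+\SqrtMinusOne$- (resp.\ $-\SqrtMinusOne$-) eigenbundle of a generalized complex structure if and only if it is transversal to its conjugate, that is $L\cap\cc{L}=0$, equivalently $L\oplus\cc{L}=\TCX\oplus\TsCX$. Since complex conjugation interchanges $\lad$ and $\dal$, one computes $\cc{L}=\set{u+\overlineHsharp u : u\in\lad}$, the graph of $\overlineHsharp:\lad\to\dal$; intersecting the two graphs, an element $\Hsharp\xi+\xi$ lies in $\cc{L}$ iff $\xi=\overlineHsharp\Hsharp\,\xi$. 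Hence $L\cap\cc{L}=0$ if and only if $\overlineHsharp\Hsharp-\Id{\dal}$ is injective, and by rank this is the same as invertibility. This is the content of the quoted Lemma applied to the graph $L$, and the two signs $\pm\SqrtMinusOne$ correspond to taking $L$ or $\cc{L}$ as the chosen eigenbundle.

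The step I expect to be the main obstacle is the second one: checking that the vector-bundle isomorphism $\dal_H\cong L$ is compatible with the brackets, i.e.\ that the twisted anchor $\anchorsH=\anchors+\anchor\circ\Hsharp$ and bracket $\basH{\cdot}{\cdot}$ defined in the previous section coincide with the projection and the Courant bracket on the graph $L$. This is where the \MC equation does the real work, supplying the integrability that turns the purely algebraic picture into a genuine generalized complex structure; the transversality computation above and the final appeal to the quoted Lemma are then routine.
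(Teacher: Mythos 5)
Your proposal is correct and follows essentially the same route as the paper: the paper simply observes that $\dal_H$ is (isomorphic to) the graph $\set{\Hsharp\xi+\xi}$ and invokes the quoted Lemma~6.1 of the Generalized Moser Lemma paper, exactly as you do. The extra work you supply (isotropy of the graph, the identification $\cc{L}$ with the graph of $\overlineHsharp$, the transversality computation, and the role of the \MC equation as integrability) is a correct unpacking of that cited lemma rather than a different argument.
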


\begin{ex}
If $H=\pi$ (i.e. H is a holomorphic Poisson bivector field) or $H=\omega$, 
it is clear that $\overlineHsharp\Hsharp$ is zero. Hence, in these two situations, 
the extended Poisson structure on $\bX$ is actually a generalized complex structure.
\end{ex}

Here is a simple example of extended Poisson structure, 
which does not arise from a generalized complex structure.
\begin{ex}\label{Ex:realtorus} 
Consider the torus $\mathbf{T}=\CC/(\ZZ+\SqrtMinusOne\ZZ)$ with its standard complex structure. 
Let $z$ be the standard coordinate on $\mathbf{T}$. Obviously, any
\begin{equation}\label{Eq:thetafddzdbarz}
\theta=f(z,\bar{z})\tfrac{d}{dz}\wedge d\bar{z},
\end{equation} 
where $f$ is a smooth $\CC$-valued function, 
is an extended Poisson structure.
In this case, $\overlineHsharp\Hsharp=\abs{f}^2\Id{}$.
Hence $\dal_\theta$ does not stem from a generalized complex structure provided that $\abs{f}=1$.
\end{ex}

\subsection{Elliptic Lie algebroids}
As in~\cite{math/0509284}, we say that a complex Lie algebroid $\algebroid$ is \textbf{elliptic} 
if $\RealPart\circ\anchorB:\algebroid\to\TX$ is surjective. Here $\anchorB:\algebroid\to\TCX$ is
the anchor map of $\algebroid$ and $\RealPart:~\TCX\to\TX$ is the projection onto the real part.

\begin{thm}[\cite{math/0509284}]\label{extremely}
If $\algebroid$ is  an elliptic Lie algebroid over a compact complex
manifold $\bX$, and $E$ a finite rank complex vector
bundle with a $B$-action as in \cite{MR1726784},
then all cohomology groups
$\cohomology{\bullet}(B,E)$ are finite dimensional.
\end{thm}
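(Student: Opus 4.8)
The plan is to show that the cochain complex computing $\cohomology{\bullet}(\algebroid,E)$ is an elliptic complex of differential operators on the compact manifold $\bX$, and then to invoke Hodge theory for elliptic complexes.

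Recall first that $\cohomology{\bullet}(\algebroid,E)$ is by definition the cohomology of $\big(\sections{\wedge\graded\algebroids\otimes E},\dL^{E}\big)$, where $\dL^{E}$ is the Lie algebroid differential with coefficients determined by the flat $\algebroid$-connection encoding the $\algebroid$-action on $E$ (as in~\cite{MR1726784}). The explicit Koszul-type formula for $\dL^{E}$ shows that the $\algebroid$-action contributes only zeroth-order terms, so the principal symbol of $\dL^{E}$ equals that of $\dL$ tensored with $\Id{E}$. Reading off the leading part of $(\dL f)(u)=\anchorB(u)f$, one finds that for a real covector $\xi\in\TsX$ at a point $x\in\bX$ the principal symbol $\sigma_{\xi}(\dL^{E})$ is $\Id{E}$ tensored with exterior multiplication by $\eta_{\xi}:=\anchorB^{*}(\xi)\in\algebroids_{x}$, where $\xi$ is extended $\CC$-linearly to $\TCX$ and $\anchorB^{*}:\TsCX\to\algebroids$ is the transpose of the anchor.

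Next I would analyze this symbol. At each nonzero $\xi$ the symbol sequence is the Koszul complex given by wedging with the single element $\eta_{\xi}$; choosing any $v\in\algebroid_{x}$ with $\duality{\eta_{\xi}}{v}=1$ produces a contracting homotopy $\iota_{v}$, so the sequence is exact whenever $\eta_{\xi}\neq 0$. It therefore remains to check that $\eta_{\xi}\neq 0$ for every nonzero real $\xi$, and this is exactly where the ellipticity hypothesis is used. Since $\anchorB$ is $\CC$-linear, $\anchorB(\SqrtMinusOne\, u)=\SqrtMinusOne\,\anchorB(u)$, so as $u$ ranges over $\algebroid_{x}$ the vectors $\RealPart(\anchorB u)$ sweep out both the real and the imaginary parts of the image of $\anchorB$. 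Hence $\duality{\xi}{\anchorB u}=0$ for all $u$ if and only if the real covector $\xi$ annihilates the image of $\RealPart\circ\anchorB$; the assumed surjectivity of $\RealPart\circ\anchorB:\algebroid\to\TX$ then forces $\xi=0$. Thus ellipticity of $\algebroid$ in the sense of Block is equivalent to ellipticity of the complex $\big(\sections{\wedge\graded\algebroids\otimes E},\dL^{E}\big)$.

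Finally, fix Hermitian metrics on $\algebroid$ and $E$ together with a smooth density on $\bX$, form the formal adjoint $(\dL^{E})^{*}$ and the associated Laplacian $\Delta=\dL^{E}(\dL^{E})^{*}+(\dL^{E})^{*}\dL^{E}$, which is a self-adjoint elliptic operator on the compact manifold $\bX$. Standard elliptic theory then yields a Hodge decomposition with finite-dimensional spaces of harmonic sections in each degree and isomorphisms $\cohomology{k}(\algebroid,E)\cong\ker\Delta$ in degree $k$; since $\wedge\graded\algebroids$ is a bounded complex, all the groups $\cohomology{\bullet}(\algebroid,E)$ are finite dimensional. The only genuinely delicate step is the symbol computation together with the real-versus-complex bookkeeping in the previous paragraph, which pins down that Block's ellipticity condition is precisely what makes the complex elliptic; granting that, finite dimensionality is the usual consequence of compactness and Hodge theory.
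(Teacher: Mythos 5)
Your argument is correct: the symbol of $\dL^{E}$ at a real covector $\xi$ is wedging by $\anchorB^{*}(\xi)$, the Koszul-complex exactness for $\anchorB^{*}(\xi)\neq 0$ is right, the identification of Block's condition (surjectivity of $\RealPart\circ\anchorB$) with nonvanishing of $\anchorB^{*}(\xi)$ for all real $\xi\neq 0$ is exactly the needed real-versus-complex bookkeeping, and Hodge theory on the compact $\bX$ then gives finite dimensionality. The paper itself offers no proof --- it quotes the theorem from Block's reference --- and your proof is precisely the standard argument that source is based on, so there is nothing to compare beyond noting that you have correctly supplied the omitted details.
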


It is therefore natural to ask when $\dal_H$ is elliptic.
An easy calculation shows the following:

\begin{prop}\label{Vienna} 
Let $a_*^H$ denote the anchor of $\dal_H$ and $\conjugate:\Toi\to\Tio$ 
the complex conjugation. 
The bundle maps $\RealPart\rond a_*^H$ and 
\begin{equation}\label{Paris}
\altmap=(\conjugate+\theta\bemol)\oplus\pi\diese :\Toi\oplus\Tios\to\Tio ,
\end{equation}
and the isomorphism of real vector bundles $\RealPart:\Tio\to\TX$ fit into the commutative diagram 
\begin{equation}\label{Diagram:alternateMapanchorRealCommute}
\xymatrix{ & \Toi \oplus \Tios \ar[dl]_{\altmap}\ar[dr]^{\RealPart\rond a_*^H} \\
\Tio \ar[rr]^{\RealPart} & & \TX. } \end{equation}
As a consequence, $\dal_H$ is an elliptic Lie algebroid
if and only if $\altmap$ is surjective.
\end{prop}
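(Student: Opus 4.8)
The plan is to establish the commutativity of the triangle~\eqref{Diagram:alternateMapanchorRealCommute} by computing the anchor $\anchorsH$ explicitly, and then to read off the ellipticity criterion from the fact that $\RealPart\colon\Tio\to\TX$ is an isomorphism of real vector bundles.

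First I would make the two anchors concrete. The anchor $\anchors$ of $\dal=\Toi\bowtie\Tios$ is the projection $\Toi\oplus\Tios\to\Toi$ followed by the inclusion $\Toi\hookrightarrow\TCX$, because the cotangent summand $\Tios$ has trivial anchor (dually to $\anchor(d\cc{z_j})=0$ for $\lad$); likewise $\anchor$ sends $\lad=\Tio\oplus\Tois$ to $\TCX$ by projecting onto $\Tio$. Feeding the block form of $\Hsharp$ into $\anchor\circ\Hsharp$ thus retains only its top row $(\theta\bemol,\pi\diese)$, which lands in $\Tio$. Writing a section of $\dal$ as $\xi=v+\eta$ with $v\in\sections{\Toi}$ and $\eta\in\sections{\Tios}$, and recalling $\anchorsH=\anchors+\anchor\circ\Hsharp$, this gives
\[ \anchorsH(\xi)=v+\theta\bemol(v)+\pi\diese(\eta)\in\Toi\oplus\Tio=\TCX, \]
whereas $\altmap(\xi)=\conjugate(v)+\theta\bemol(v)+\pi\diese(\eta)\in\Tio$ by the definition of $\altmap$.

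The crux---and the only step carrying any content---is the observation that $\RealPart$ is invariant under complex conjugation: from $\RealPart(w)=\thalf(w+\cc{w})$ one gets $\RealPart(v)=\RealPart(\cc{v})=\RealPart(\conjugate(v))$ for every $v\in\Toi$. The two expressions above differ only in their leading terms, $v$ versus $\conjugate(v)=\cc{v}$, which are complex conjugate to one another, while the remaining terms $\theta\bemol(v)+\pi\diese(\eta)$ agree. Applying $\RealPart$ and using its conjugation-invariance on the leading term therefore yields $\RealPart\circ\anchorsH=\RealPart\circ\altmap$, i.e.\ the commutativity of~\eqref{Diagram:alternateMapanchorRealCommute}; note that $\altmap$ lands in $\Tio$, where $\RealPart$ restricts to the isomorphism $\RealPart\colon\Tio\to\TX$.

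Finally the ellipticity statement is formal. By definition $\dal_H$ is elliptic exactly when $\RealPart\circ\anchorsH$ is surjective onto $\TX$, and by the identity just proved this map equals $\RealPart\circ\altmap$. Since $\RealPart\colon\Tio\to\TX$ is an isomorphism, $\RealPart\circ\altmap$ is surjective if and only if $\altmap$ is, which is the claim. I do not expect a genuine obstacle here: the entire argument is bookkeeping of the block decompositions of $\Hsharp$ and $\anchors$, pivoting on the single fact that passing from the $\Toi$-valued term $v$ of the anchor to the $\Tio$-valued term $\conjugate(v)$ leaves real parts unchanged.
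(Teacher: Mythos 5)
Your proof is correct and is exactly the ``easy calculation'' the paper alludes to without writing out: compute $\anchorsH=\anchors+\anchor\circ\Hsharp$, observe that only the top row $(\theta\bemol,\pi\diese)$ of the block matrix for $\Hsharp$ survives the anchor of $\lad$, and use that $\RealPart(v)=\RealPart(\cc{v})$ to trade the $\Toi$-component $v$ for $\conjugate(v)\in\Tio$. The reduction of ellipticity to surjectivity of $\altmap$ via the isomorphism $\RealPart\colon\Tio\to\TX$ is also as intended.
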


\begin{ex} 
When $H=\pi$, or $\omega$, it is clear that $\dal_H$ is elliptic. On the other hand, if we consider the torus $\bT$ endowed with the bivector field $\theta$ of Example~\ref{Ex:realtorus}, the Lie algebroid $\dal_H$ is elliptic if and only if $f$ is not identically $1$.
\end{ex}

\subsection{Poisson cohomology}

\begin{defn}
Given an extended Poisson manifold $(\bX,H)$, the cohomology of
the Lie algebroid $\dal_H$ is called the \textbf{Poisson cohomology}
of the extended Poisson structure, and denoted $\cohomology{\bullet}
(\bX,H)$. In other words, it is the cohomology of the cochain complex:
\begin{equation}\label{eq:H}
\cdots \xto{\barpdH} \sections{\wedge^{k}\lad} \xto{\barpdH} \sections{\wedge^{k+1}\lad} \xto{\barpdH} \cdots
,\end{equation}
where $\sections{\wedge^k\lad}=\oplus_{i+j=k}\formss{0, j}(\tangent{i,0})$ and $\barpdH=\barpd+\ba{H}{\cdot}$.
\end{defn}

Poisson cohomology is also called tangent cohomology by Kontsevich~\cite{MR2062626}.

As an immediate consequence of Theorem~\ref{extremely} and Proposition~\ref{Vienna}, we have
\begin{cor}
If $H$ is an extended Poisson structure on a compact complex
manifold $\bX$ and the map $\altmap$ (given by Eq.~\eqref{Paris})
is surjective, then all Poisson cohomology groups are finite
dimensional.
\end{cor}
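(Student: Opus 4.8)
The plan is to recognize the Poisson cohomology $\cohomology{\bullet}(\bX,H)$ as a particular instance of the Lie algebroid cohomology governed by Theorem~\ref{extremely}, and then simply to chain together Proposition~\ref{Vienna} and Theorem~\ref{extremely}. Nothing new needs to be proved; the work is entirely one of matching the two frameworks so that the cited results apply verbatim.

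First I would observe that, by definition, $\cohomology{\bullet}(\bX,H)$ is precisely the cohomology of the Lie algebroid $\dal_H$ with coefficients in the trivial line bundle $E=\CC$, equipped with the trivial $\dal_H$-action. Indeed, the cochain complex $\bigl(\sections{\wedge^\bullet\lad},\barpdH\bigr)$ of Eq.~\eqref{eq:H} is nothing but the standard Lie algebroid cochain complex $\bigl(\sections{\wedge^\bullet(\dal_H)^*},\dee_{\dal_H}\bigr)$: one has $(\dal_H)^*=\lad$ as a vector bundle, and the Lie algebroid differential of $\dal_H$ is exactly $\deesH=\barpd+\ba{H}{\cdot}=\barpdH$, as recorded in Eq.~\eqref{eq:dH}. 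Next I would invoke Proposition~\ref{Vienna}: the hypothesis that the bundle map $\altmap$ of Eq.~\eqref{Paris} is surjective is equivalent to the ellipticity of $\dal_H$. Since $\bX$ is compact by assumption and $E=\CC$ is a finite rank (rank one) complex vector bundle carrying a $\dal_H$-action, the hypotheses of Theorem~\ref{extremely} are met with $B=\dal_H$. That theorem then yields the finite dimensionality of all the groups $\cohomology{\bullet}(\dal_H,\CC)=\cohomology{\bullet}(\bX,H)$, which is the desired conclusion.

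Since the result is an immediate consequence of the two cited statements, there is no substantive analytic obstacle left to overcome here: Block's elliptic theory, packaged in Theorem~\ref{extremely}, has already done the heavy lifting of establishing finiteness for elliptic Lie algebroids over compact complex manifolds. The only point that genuinely needs care is the bookkeeping in the first step, namely confirming that ``the cohomology of the Lie algebroid $\dal_H$'' appearing in the definition of Poisson cohomology really is Lie algebroid cohomology with coefficients in a bundle admitting a $\dal_H$-action in the Evens-Lu-Weinstein sense of~\cite{MR1726784}, so that Theorem~\ref{extremely} is applicable. This is guaranteed because the trivial line bundle with the trivial action is always such a coefficient bundle, so the two frameworks coincide and the corollary follows.
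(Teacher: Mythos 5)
Your proposal is correct and follows exactly the paper's own route: the paper derives this corollary as an immediate consequence of Theorem~\ref{extremely} and Proposition~\ref{Vienna}, which is precisely the chain of reasoning you spell out (with the trivial coefficient bundle $E=\CC$ making Block's theorem applicable to $B=\dal_H$). Your extra care in identifying the complex~\eqref{eq:H} with the standard Lie algebroid cochain complex of $\dal_H$ is the only bookkeeping the paper leaves implicit.
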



\begin{rmk}
When $H$ is a holomorphic Poisson bivector field $\pi$, the cochain
complex~\eqref{eq:H} is the total complex of the double complex as
discussed in Corollary~4.26 in~\cite{MR2439547}.
\end{rmk}

On the other hand, if $H=\theta\in\formss{0,1}(\tangent{1,0})$ is a
Maurer-Cartan element such that $\overlinethetaflat\circ\theta\bemol-\id$
is invertible, then $\theta$ defines a
new complex structure on $\bX $ according to Kodaira~\cite{MR2109686}.

The following lemma can be verified directly.

\begin{lem}
\label{Zurich} Let $H=\theta\in\formss{0,1}(\tangent{1,0})$
be a Maurer-Cartan element such that
$\overlinethetaflat \circ\theta\bemol-\Id{}$ is invertible.
Then the Lie algebroid $\dal_H$ is isomorphic to
$T^{1,0}_\theta\bX\bowtie(T^{0,1}_\theta\bX)^*$,
where $T^{1,0}_\theta\bX$ and $T^{0,1}_\theta\bX$ are, respectively,
the $+\SqrtMinusOne$ and $-\SqrtMinusOne$ eigenbundles of the deformed almost complex structure
$J_\theta:\TX\to\TX$. As a consequence, the differential operator
$\deesH$ in Eq.~\eqref{eq:dH} is equal to $\barpd_\theta$, the new
$\barpd$-operator of the deformed complex structure.
\end{lem}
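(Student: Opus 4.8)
The plan is to exploit the dictionary, recalled in the preliminaries, between a Lie algebroid structure on a vector bundle and its degree-one differential: it will suffice to produce an isomorphism of vector bundles carrying the anchor and bracket of $\dal_H$ to those of the deformed algebroid, after which the statement about $\deesH$ follows by dualizing. I would realize this isomorphism geometrically through the graph of $\Hsharp$ inside the standard Courant algebroid $\TCX\oplus\TsCX$, where $\lad\oplus\dal$ is identified with $\TCX\oplus\TsCX$.

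First I would specialize to $H=\theta$, so that $\pi=0$ and $\omega=0$ and the block-matrix expression for $\Hsharp$ recalled above shows that $\Hsharp$ is block-diagonal with entries $\theta\bemol:\Toi\to\Tio$ and $\theta\diese:\Tios\to\Tois$. By the graph description recalled above (Lemma~6.1 of~\cite{arxivmath0702718}), $\dal_H$ is isomorphic as a vector bundle to $L=\{\Hsharp\xi+\xi:\xi\in\dal\}$. Writing $\xi=v+\alpha$ with $v\in\Toi$, $\alpha\in\Tios$, the tangent component of $\Hsharp\xi+\xi$ is $v+\theta\bemol(v)$ and the cotangent component is $\alpha+\theta\diese(\alpha)$; hence the tangent part of $L$ is the graph of $\theta\bemol$ and its cotangent part the graph of $\theta\diese$.

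Next I would identify $L$ with the deformed structure. By definition the subbundle $\{v+\theta\bemol(v):v\in\Toi\}\subset\TCX$ is an eigenbundle of the deformed almost complex structure $J_\theta$, and the hypothesis that $\overlinethetaflat\circ\theta\bemol-\Id{}$ be invertible is exactly the transversality of the two eigenbundles, so that $J_\theta$ is a genuine almost complex structure. A short computation using $\theta\diese=-(\theta\bemol)^*$ identifies the graph of $\theta\diese$ with the annihilator of this eigenbundle, i.e. with the deformed forms dual to it. Thus $L$ is precisely the underlying vector bundle of $T^{1,0}_\theta\bX\bowtie(T^{0,1}_\theta\bX)^*$. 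Moreover $J_\theta$ is integrable, this being forced by the Maurer-Cartan equation $\barpd\theta+\thalf\ba{\theta}{\theta}=0$ (equivalently by $(\deesH)^2=0$), so its Dolbeault operator $\barpd_\theta$ is defined.

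It remains to check that this identification respects the Lie algebroid structures, and this is the step I expect to be the main obstacle. I would argue in the Dirac-geometric picture: because $H$ is a Maurer-Cartan element, $L$ is a Dirac structure of $\TCX\oplus\TsCX$, and the twisted data $\anchorsH=\anchors+\anchor\circ\Hsharp$ and $\basH{\cdot}{\cdot}=\bas{\cdot}{\cdot}+[\cdot,\cdot]_\theta$ are, under the graph isomorphism, simply the restriction to $L$ of the projection onto $\TCX$ and of the Courant bracket. Matching anchors is then immediate, since $\anchorsH(\xi)=v+\theta\bemol(v)$ lands in the deformed eigenbundle. Matching brackets reduces, by the Leibniz rule, to checking agreement of $\basH{\cdot}{\cdot}$ with the bracket of $T^{1,0}_\theta\bX\bowtie(T^{0,1}_\theta\bX)^*$ on generators, namely deformed vector fields and deformed one-forms, where both sides are computed from the Courant bracket of $L$. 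Rewriting this bracket identity dually gives $\deesH=\dees+\ba{\theta}{\cdot}=\barpd+\ba{\theta}{\cdot}$ on one side and the Dolbeault operator $\barpd_\theta$ on the other, which yields the final assertion $\deesH=\barpd_\theta$.
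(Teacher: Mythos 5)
Your argument is correct in substance, but it takes a genuinely different route from the paper, which offers no written proof at all: the lemma is dismissed with ``can be verified directly,'' and the intended verification is visibly the coordinate computation underlying Lemma~\ref{Lem:localstrAbdsH} -- specializing $\pi=\omega=0$ there gives $\anchorsH(\partialzbar{i})=\partialzbar{i}-\thetacoeff{p}{i}\partialz{p}$, which spans the deformed antiholomorphic tangent bundle, and the twisted brackets $\basH{\cdot}{\cdot}$ are then checked against those of the deformed $\bowtie$-algebroid in the chart. You instead argue globally through the Courant algebroid: you identify $\dal_H$ with the graph of $\Hsharp$, observe that for $H=\theta$ the graph splits as $(\text{graph of }\theta\bemol)\oplus(\text{graph of }\theta\diese)=V\oplus\mathrm{Ann}(V)$ with $V$ the deformed eigenbundle, and then read off the anchor and bracket as the restriction of the projection and of the Courant bracket. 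All the individual steps are sound: the invertibility of $\overlinethetaflat\circ\theta\bemol-\Id{}$ is indeed exactly transversality of $V$ with $\overline{V}$, the annihilator computation from $\theta\diese=-(\theta\bemol)^*$ is right, integrability of $J_\theta$ does follow from the Maurer--Cartan equation, and the identification of $\dal_H$ with the graph \emph{as a Lie algebroid} (not merely as a vector bundle, which is all the quoted Lemma~6.1 of~\cite{arxivmath0702718} gives you) is the Liu--Weinstein--Xu theorem on Hamiltonian operators from~\cite{MR1472888}, which you should cite explicitly for that step. What your route buys is that the bracket matching and the final identity $\deesH=\barpd_\theta$ come from general Dirac-structure facts (the Courant bracket on $V\oplus\mathrm{Ann}(V)$ is the $\bowtie$-structure) rather than from a chart computation; what it costs is reliance on that external machinery. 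The one point you should pin down is the labelling of the eigenbundles: with Kodaira's conventions the graph of $\theta\bemol:\Toi\to\Tio$ is the deformed $(0,1)$-bundle, so the graph Dirac structure is the deformed analogue of $\dal=\Toi\bowtie\Tios$ and its dual carries $\barpd_\theta$; you must say explicitly which eigenbundle of $J_\theta$ you call $T^{1,0}_\theta$ so that your conclusion is $\deesH=\barpd_\theta$ and not $\pd_\theta$, and so that it matches the statement as printed.
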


Thus we have
\begin{prop}
If $H=\theta\in\formss{0,1}(\tangent{1,0})$ is a Maurer-Cartan element such that
$\overlinethetaflat\circ\theta\bemol-\Id{}$ is invertible, then
\[ \cohomology{k}(\bX,H)\cong\oplus_{i+j=k}\cohomology{i}(\bX,\wedge^j T_\theta\bX) ,\]
where $T_\theta\bX$ denotes the holomorphic tangent bundle of
the deformed complex manifold $\bX$.
\end{prop}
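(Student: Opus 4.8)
The final proposition claims that for a Maurer–Cartan element $H = \theta \in \Omega^{0,1}(T^{1,0}\mathbf{X})$ with $\overline{\theta}^\flat \circ \theta^\flat - \mathrm{Id}$ invertible, the Poisson cohomology decomposes as $\cohomology{k}(\mathbf{X},H) \cong \bigoplus_{i+j=k} \cohomology{i}(\mathbf{X}, \wedge^j T_\theta \mathbf{X})$, where the right-hand side is Dolbeault cohomology of the deformed complex manifold with values in exterior powers of its holomorphic tangent bundle. The strategy I'd employ is to reduce the statement entirely to Lemma~\ref{Zurich}, which already identifies the twisted Lie algebroid $\dal_H$ with the matched pair $T^{1,0}_\theta\mathbf{X} \bowtie (T^{0,1}_\theta\mathbf{X})^*$ of the deformed complex structure, and identifies $\barpdH$ with the deformed operator $\barpd_\theta$.

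**The key reduction.** First I would observe that, by Lemma~\ref{Zurich}, the cochain complex computing $\cohomology{\bullet}(\mathbf{X},H)$ is isomorphic to the complex $(\sections{\wedge^\bullet(T^{1,0}_\theta\mathbf{X} \bowtie (T^{0,1}_\theta\mathbf{X})^*)}, \barpd_\theta)$. The crucial point is that this is exactly the situation the paper has already analyzed for the undeformed complex structure: the matched pair $T^{1,0}\mathbf{X} \bowtie (T^{0,1}\mathbf{X})^*$ is the Lie algebroid $\lad$ whose differential is the ordinary $\barpd$, and whose exterior powers decompose as $\wedge^k \lad \cong \bigoplus_{i+j=k} \tangent{i,0} \otimes \cotangent{0,j}$. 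Applying this decomposition verbatim to the deformed complex structure $J_\theta$ gives
\begin{equation*}
\wedge^k\bigl(T^{1,0}_\theta\mathbf{X} \bowtie (T^{0,1}_\theta\mathbf{X})^*\bigr) \cong \bigoplus_{i+j=k} \wedge^i T^{1,0}_\theta\mathbf{X} \otimes \wedge^j (T^{0,1}_\theta\mathbf{X})^*.
\end{equation*}
Under this identification, sections are precisely $(0,j)_\theta$-forms valued in $\wedge^i T^{1,0}_\theta\mathbf{X}$, i.e.\ the coefficient space $\formss{0,j}_\theta(\wedge^i T^{1,0}_\theta\mathbf{X})$ for the deformed structure.

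**Matching the differentials.** Next I would verify that $\barpd_\theta$ respects the bidegree grading: acting on $\formss{0,j}_\theta(\wedge^i T^{1,0}_\theta\mathbf{X})$, it is the Dolbeault operator of the deformed complex manifold $\mathbf{X}_\theta$ with coefficients in the holomorphic vector bundle $\wedge^i T_\theta\mathbf{X}$ and raises $j$ by one while fixing $i$. This is the deformed analogue of the earlier identification $\dees = \barpd : \formss{0,j}(\tangent{i,0}) \to \formss{0,j+1}(\tangent{i,0})$; since $J_\theta$ is a genuine integrable complex structure (invertibility of $\overlinethetaflat \circ \theta^\flat - \mathrm{Id}$ guarantees this via Kodaira~\cite{MR2109686}), the holomorphic tangent bundle $T_\theta\mathbf{X}$ and its exterior powers are genuine holomorphic vector bundles, so $\barpd_\theta$ on $\wedge^i T_\theta\mathbf{X}$-valued forms is exactly the Dolbeault complex computing $\cohomology{i}(\mathbf{X}, \wedge^i T_\theta\mathbf{X})$. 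The total complex therefore splits as a direct sum over the index $i$ of the Dolbeault complexes, and the cohomology decomposes accordingly:
\begin{equation*}
\cohomology{k}(\mathbf{X},H) \cong \bigoplus_{i+j=k} \cohomology{i}(\mathbf{X}, \wedge^j T_\theta\mathbf{X}).
\end{equation*}

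**The main obstacle.** The genuinely load-bearing step is confirming that $\barpd_\theta$ strictly preserves the $(i,j)$-bidegree — that it never mixes the $\wedge^i T^{1,0}_\theta$ factor with the form degree. This is where all the content lives; once bigrading-preservation is established the decomposition is a formal consequence of the fact that a bigraded complex whose differential fixes one index splits as a direct sum. I expect this to follow cleanly from Lemma~\ref{Zurich}'s identification $\barpdH = \barpd_\theta$ combined with the earlier explicit computation that the Lie algebroid differential of a complex-structure matched pair is the Dolbeault operator in the relevant bidegree; essentially one transports the undeformed computation through the isomorphism $J \rightsquigarrow J_\theta$. No new analytic input is required beyond the integrability furnished by the invertibility hypothesis.
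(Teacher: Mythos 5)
Your proposal is correct and follows essentially the same route as the paper: the paper offers no separate proof, presenting the proposition as an immediate consequence of Lemma~\ref{Zurich} (``Thus we have\dots''), and your elaboration --- transporting the bigraded decomposition $\wedge^k\lad\cong\bigoplus_{i+j=k}\tangent{i,0}\otimes\cotangent{0,j}$ to the deformed complex structure and identifying $\barpdH=\barpd_\theta$ with the Dolbeault operator in each bidegree --- is exactly the argument the authors intend. The bidegree-preservation you flag as the key step is indeed what Lemma~\ref{Zurich} is supplying, so nothing further is needed.
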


\subsection{Coisotropic submanifolds}

Suppose that $\bY\subseteq \bX$ 
is a complex submanifold~\cite{MR2109686}. Set
\begin{gather*} 
\NYio=\set{\xi\in(\Tio|_{\bY})^* \st \duality{\xi}{Y}=0,\;\forall Y\in\TYio} ,
\end{gather*}
and consider the subbundle $\KKK=\TYoi\oplus\NYio$ of $\dal$. 

\begin{defn} 
A complex submanifold $\bY$ of $\bX$ is called \textbf{coisotropic} if
$H(u,v)=0$, for all $u,v\in\KKK$.
\end{defn}

\begin{ex} 
If $H=\pi$ is a holomorphic Poisson bivector field, 
then $\bY$ is coisotropic if and only if it is coisotropic in the usual sense,
i.e. ${\pi}{(\xi_1,\xi_2)}=0$, $\forall\xi_1,\xi_2\in\NYio$, 
or $\pi\diese(\NYio)\subseteq\TYio$.
\end{ex}

\begin{ex} 
If $H=\omega$, then $\bY$ is coisotropic if and only if
$\iota^*\omega=0$, where $\iota:\bY\to\bX$ is the embedding map.
\end{ex}

\begin{ex} 
If $H=\theta$, then $\bY$ is coisotropic if and only if $\theta\bemol(\TYoi)\subseteq\TYio$.
\end{ex}

It is well known that given a coisotropic submanifold $C$ of a real Poisson manifold $(P,\pi)$, 
the conormal bundle $NC=\set{\xi\in T_c^*P \st c\in C;\; \duality{\xi}{X}=0,\;\forall X\in T_c C}$ 
is a Lie subalgebroid of the cotangent Lie algebroid $(T^*P)_\pi$~\cite{MR959095}.
The following proposition can be considered as an analogue of this fact in the extended Poisson setting.

\begin{prop}
Let $\bY$ be a coisotropic submanifold of the extended Poisson manifold $(\bX,H)$. 
Then the vector subbundle $\KKK=\TYoi\oplus\NYio$
is a Lie subalgebroid of $\dal_H$. That is, $a_*^H$ maps $\KKK$ into $T_{\CC}\bY$ 
and for any smooth extensions $\widetilde{u},\widetilde{v}\in\sections{\dal_H}$ to $\bX$ of any two sections $u,v\in\sections{\KKK}$, the restriction to $\bY$ of 
$\basH{\widetilde{u}}{\widetilde{v}}$ 
is a section of $\KKK$ which does not depend on the choice of extensions.
\end{prop}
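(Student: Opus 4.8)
We must show two things about the subbundle $\KKK=\TYoi\oplus\NYio$ of $\dal_H$: first, that the twisted anchor $a_*^H$ maps $\KKK$ into $T_{\CC}\bY$, and second, that $\KKK$ is closed under the twisted bracket $\basH{\cdot}{\cdot}$ in the sense that the restriction to $\bY$ of $\basH{\widetilde{u}}{\widetilde{v}}$ lies in $\sections{\KKK}$ and is independent of the chosen extensions. The plan is to reduce everything to the defining coisotropy condition $H(u,v)=0$ for $u,v\in\KKK$, using the explicit formulas $\anchorsH=\anchors+\anchor\circ\Hsharp$ and $\basH{\alpha}{\beta}=\bas{\alpha}{\beta}+[\alpha,\beta]_H$ recorded in the section on Hamiltonian operators.

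\textbf{The anchor condition.}
First I would handle $a_*^H$. Since the untwisted anchor $\anchors:\dal\to T_\CC\bX$ is the anchor of $\Toi\bowtie\Tios$ and vanishes on the $\Tios$-part while projecting $\TYoi$ into $\TYoi\subseteq T_\CC\bY$, the untwisted part already maps $\KKK$ into $T_\CC\bY$. The new term is $\anchor\circ\Hsharp$, so the task is to show $\anchor(\Hsharp(u))\in T_\CC\bY$ for $u\in\KKK$. The key observation is that the coisotropy hypothesis $H(u,v)=0$ for all $v\in\KKK$ says precisely that $\Hsharp(u)$ pairs to zero against every element of $\KKK$; combined with the fact that $\NYio$ annihilates $\TYio$ by definition, this forces the $\Tio$-component of $\Hsharp(u)$ to land in the complexified tangent bundle $\TYio$ of $\bY$. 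Applying the anchor $\anchor$, which is essentially the identity on the $\Tio$-factor, then lands us in $T_\CC\bY$. This is a pointwise linear-algebra argument and should be routine once the pairing is written out in the block-matrix form of $\Hsharp$ displayed before the proposition.

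\textbf{Bracket closure and independence of extensions.}
For the bracket I would write $\basH{\widetilde u}{\widetilde v}=\bas{\widetilde u}{\widetilde v}+[\widetilde u,\widetilde v]_H$ and treat the two summands. That $\bas{\cdot}{\cdot}$ preserves $\KKK$ along $\bY$ follows from the classical fact quoted just before the proposition: $\Toi\bowtie\Tios$ restricts on the complex submanifold $\bY$ so that $\TYoi$ is involutive (it is the $\bar\partial$-algebroid of $\bY$) and $\NYio$ behaves like a conormal piece. The genuinely new work is the twist term $[\widetilde u,\widetilde v]_H=\LieDer_{\Hsharp(\widetilde u)}\widetilde v-\LieDer_{\Hsharp(\widetilde v)}\widetilde u-\dees\duality{\Hsharp(\widetilde u)}{\widetilde v}$. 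I expect the main obstacle to be controlling these Lie-derivative terms along $\bY$: the cleanest route is to use that $\Hsharp$ maps $\KKK$ into a subbundle tangent to $\bY$ (the anchor computation above shows the relevant image is tangent), so the Lie derivatives are taken along vector fields tangent to $\bY$ and therefore preserve the restriction; the independence of extensions then follows because any two extensions differ by a section vanishing on $\bY$, and a tangential Lie derivative of such a section restricts to zero on $\bY$. I would organize this by first proving the tangency of $\Hsharp(\KKK)$ to $\bY$ (a byproduct of the anchor step), then checking that each of the three terms in $[\cdot,\cdot]_H$ restricts to a section of $\KKK$ and is extension-independent, invoking the Leibniz rule and the Maurer-Cartan equation \eqref{Eqt:ExtendedMC} to absorb the error terms. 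The subtlety to watch is the pairing term $\dees\duality{\Hsharp(\widetilde u)}{\widetilde v}$, where one must verify that the function $\duality{\Hsharp(\widetilde u)}{\widetilde v}$ restricts to a constant-along-the-fibers quantity whose $\dees$ lies in $\NYio$; this is exactly where the coisotropy condition $H(u,v)=0$ re-enters, guaranteeing the pairing vanishes on $\bY$ and hence its differential is conormal.
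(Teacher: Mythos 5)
The paper states this proposition without any proof (it moves directly on to Poisson relations), so there is no argument of the authors' to compare yours against; I can only assess your plan on its own terms. Its overall strategy is sound and would work. The one fact that drives everything is the one you isolate at the start: coisotropy of $\bY$ is exactly the statement that $\Hsharp(\KKK)$ lies in the annihilator of $\KKK$ inside $\lad=\Tio\oplus\Tois$, which is $\TYio\oplus\NYoi$. From the $\Tio$-component of this you correctly get $\anchor(\Hsharp u)\in\TYio\subseteq T_\CC\bY$, hence the anchor condition, and (by the Leibniz rule, as you say) the independence of the bracket on the choice of extensions. The term $\dee\duality{\Hsharp\widetilde{u}}{\widetilde{v}}=\pd\bigl(H(\widetilde{u},\widetilde{v})\bigr)$ is handled exactly as you propose: the function vanishes along $\bY$ by coisotropy, so its $\pd$ restricts into $\NYio$ because $\bY$ is a complex submanifold.

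The place where your outline is too loose is the treatment of $\LieDer_{\Hsharp\widetilde{u}}\widetilde{v}$. You justify it by saying the Lie derivative is ``taken along a vector field tangent to $\bY$,'' but $\Hsharp\widetilde{u}$ is a section of $\lad=\Tio\oplus\Tois$, not a vector field, and $\LieDer_{X}=\iota_X\dee+\dee\iota_X$ is the Lie-algebroid Lie derivative for $\lad$ (with $\dee=\pd$). Writing $\Hsharp u=W+\eta$ with $W\in\Tio$ and $\eta\in\Tois$, the expansion of $\LieDer_{W+\eta}(v+\xi)$ contains terms such as $\iota_{\eta}\pd v$ (a $(1,0)$-form) which must be shown to land in $\NYio$; for this you need $\eta\in\NYoi$, i.e.\ the $\Tois$-component of the containment $\Hsharp(\KKK)\subseteq\TYio\oplus\NYoi$, not merely the tangency of $W$. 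Since coisotropy does give you the full containment, this is a repairable imprecision rather than a fatal gap, but the computation should be carried out in adapted holomorphic coordinates (where $\bY=\{z^{k+1}=\dots=z^n=0\}$) to see that every term of $\LieDer_{\Hsharp\widetilde{u}}\widetilde{v}\big|_{\bY}$ lies in $\TYoi\oplus\NYio$. Two smaller points: the closure of the untwisted bracket $\bas{\cdot}{\cdot}$ on $\KKK$ is not literally the real-Poisson conormal fact quoted before the proposition and deserves its own (easy) check; and the Maurer--Cartan equation plays no role in this argument beyond guaranteeing that $\dal_H$ is a Lie algebroid in the first place, so your plan to invoke it ``to absorb the error terms'' points at work that does not actually arise.
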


\subsection{Poisson relations}

Following Weinstein~\cite{MR959095}, we introduce the following
\begin{defn}
Let $(\bX_1,H_1)$ and $(\bX_2,H_2)$ be extended Poisson manifolds.
A Poisson relation from $(\bX_2,H_2)$ to $(\bX_1,H_1)$
is a coisotropic submanifold
of the product manifold $\bX_1\times\bX_2^{\vee}$
(i.e. $\bX_1\times\bX_2$ endowed with the extended Poisson structure
 $(H_1, H_2^{\vee})$, 
see Corollary~\ref{Cor:Hminus}).
\end{defn}

We call a holomorphic map $f:\bX_2\to\bX_1$ between 
 extended Poisson manifolds $(\bX_1,H_1)$ and $(\bX_2,H_2)$ an \textbf{extended Poisson map} if its graph
\[ G_f=\set{(f(x),x) \st x\in\bX_2}\subset\bX_1\times\bX_2^{\vee} \]
is a Poisson relation.

\begin{prop} Let $(\bX_1,H_1)$ and $(\bX_2,H_2)$
be extended Poisson manifolds, where the extended Poisson structures decompose as $H_i=\pi_i+\theta_i+\omega_i$ ($i=1,2$). Then a holomorphic map
$f:\bX_2\to\bX_1$ is an extended Poisson map if and only if
$f_*\pi_2=\pi_1$; $f^*\omega_1=\omega_2$; and $f_*\circ\theta\bemol_2=\theta\bemol_1\circ f_*$.
\end{prop}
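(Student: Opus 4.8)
The plan is to unwind the definition of Poisson relation applied to the graph $G_f \subset \bX_1 \times \bX_2^{\vee}$ and to evaluate the coisotropy condition on the three natural types of sections of $\KKK_{G_f}$, each of which reproduces exactly one of the three stated equations.

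First I would make $\KKK_{G_f} = \TYoi \oplus \NYio$ explicit for $\bY = G_f$. Since $f$ is holomorphic, $f_*$ preserves the $(1,0)/(0,1)$ bigrading, so $\TYio = \set{(f_* v, v) : v \in \Tio\bX_2}$ and $\TYoi = \set{(f_*\bar v, \bar v) : \bar v \in \Toi\bX_2}$. Pairing an element $(\xi_1,\xi_2)$ of $(\Tio\bX|_{G_f})^* = \Tios\bX_1 \oplus \Tios\bX_2$ against $\TYio$ gives $\duality{(\xi_1,\xi_2)}{(f_* v,v)} = \duality{f^*\xi_1 + \xi_2}{v}$, so $\NYio = \set{(\xi_1, -f^*\xi_1) : \xi_1 \in \Tios\bX_1}$. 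Thus, in the splitting $\dal = (\Toi\bX_1 \oplus \Toi\bX_2) \oplus (\Tios\bX_1 \oplus \Tios\bX_2)$ over the product, the bundle $\KKK_{G_f}$ is spanned pointwise by tangent sections $u_T = (f_*\bar v,\bar v;0,0)$ and conormal sections $u_N = (0,0;\xi_1,-f^*\xi_1)$.

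Next I would record how $H = (H_1, H_2^{\vee})$ acts on $\dal$. Over the product one has $\pi = \pi_1 - \pi_2$, $\theta = \theta_1 + \theta_2$ and $\omega = \omega_1 - \omega_2$, the signs being those of $H_2^{\vee} = -\pi_2 + \theta_2 - \omega_2$ (Corollary~\ref{Cor:Hminus}); each summand lives on its own factor, so $\theta\bemol = \theta_1\bemol \oplus \theta_2\bemol$ and similarly for $\pi\diese$ and $\omega\bemol$. As an alternating form on $\dal$, the bivector $\pi$ contracts only the $\Tios$-components of its two arguments, $\omega$ only the $\Toi$-components, and $\theta$ pairs one $\Toi$-component against one $\Tios$-component.

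Since $H$ is alternating and $\KKK_{G_f} = \TYoi \oplus \NYio$, the vanishing of $H$ on all of $\KKK_{G_f}$ is equivalent to its vanishing separately on the pairs $(u_T,v_T)$, $(u_T,v_N)$ and $(u_N,v_N)$. Evaluating each is now routine bookkeeping: $(u_T,v_T)$ involves only $\omega$ and gives $\omega_1(f_*\bar v, f_*\bar w) = \omega_2(\bar v,\bar w)$, that is $f^*\omega_1 = \omega_2$; $(u_N,v_N)$ involves only $\pi$ and, once the two factors $-f^*$ multiply to $+$ and the $\vee$-sign on $\pi_2$ is used, gives $\pi_1(\xi_1,\eta_1) = \pi_2(f^*\xi_1, f^*\eta_1)$, that is $f_*\pi_2 = \pi_1$; and $(u_T,v_N)$ involves only $\theta$ and gives $\theta_1\bemol \circ f_* = f_* \circ \theta_2\bemol$, the single $-f^*$ from the conormal providing the relative sign. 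Reading these at every point $(f(x),x)$ of $G_f$ yields the three asserted relations, and conversely the three relations force $H|_{\KKK_{G_f}} = 0$. The computation itself is elementary; the one point that needs care---and the main obstacle---is keeping the two independent sources of signs straight, namely the $\vee$-flips inside $H_2^{\vee}$ and the sign $-f^*$ built into $\NYio$. Their interplay is precisely what turns the coisotropy condition into the untwisted relations $f_*\pi_2 = \pi_1$, $f^*\omega_1 = \omega_2$ and $f_* \circ \theta_2\bemol = \theta_1\bemol \circ f_*$ rather than their negatives; in particular it is worth checking that the $\theta$-block, which $\vee$ leaves unchanged, nevertheless yields an intertwining (not anti-intertwining) relation thanks to the conormal sign.
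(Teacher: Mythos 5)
Your verification is correct, and it is exactly the ``direct verification'' that the paper explicitly leaves to the reader: identify $\TYoi$ and $\NYio$ for the graph $G_f$, split the coisotropy condition into the three pure blocks ($\omega$ on tangent--tangent, $\pi$ on conormal--conormal, $\theta$ on mixed pairs), and track the signs coming from $H_2^{\vee}$ and from the $-f^*$ in the conormal. No discrepancy with the paper's (omitted) argument.
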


The proof is a direct verification and is left to the reader.
As a consequence, we have
\begin{cor}The composition of two extended Poisson maps is again an
extended Poisson map.
\end{cor}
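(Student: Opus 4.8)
The plan is to obtain the corollary as a formal consequence of the characterization in the preceding Proposition, so that the whole content reduces to the functoriality of the tangent map under composition. Let $f:\bX_2\to\bX_1$ and $g:\bX_3\to\bX_2$ be extended Poisson maps, write $H_i=\pi_i+\theta_i+\omega_i$ on $\bX_i$ for $i=1,2,3$, and set $h=f\rond g:\bX_3\to\bX_1$. By the Proposition it suffices to verify the three conditions for $h$, namely $h_*\pi_3=\pi_1$, $h^*\omega_1=\omega_3$, and $h_*\rond\theta\bemol_3=\theta\bemol_1\rond h_*$, given that the corresponding identities hold for $f$ (relating $\bX_2$ to $\bX_1$) and for $g$ (relating $\bX_3$ to $\bX_2$).

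The one point that needs care is the precise meaning of the two ``pushforward'' conditions, since for a non-invertible holomorphic map the pushforward of a bivector field is not globally well defined. I would therefore read each such condition fibrewise as an $f$-relatedness statement: writing $f_{*,x}:T_x\bX_2\to T_{f(x)}\bX_1$ for the differential, which preserves the type decomposition $T^{1,0}\oplus T^{0,1}$ because $f$ is holomorphic, the condition $f_*\pi_2=\pi_1$ means $(\wedge^2 f_{*,x})\pi_2(x)=\pi_1(f(x))$ for all $x\in\bX_2$, and $f_*\rond\theta\bemol_2=\theta\bemol_1\rond f_*$ means the fibrewise intertwining $f_{*,x}\rond\theta\bemol_2(x)=\theta\bemol_1(f(x))\rond f_{*,x}$. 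With this reading all three conditions become pointwise identities in the linear maps $f_{*,x}$ and $g_{*,x}$, and the transparency of functoriality is restored.

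The verification then follows at once from the chain rule $h_{*,x}=f_{*,g(x)}\rond g_{*,x}$ and its exterior powers. As a representative case, for the bivectors one has $(\wedge^2 h_{*,x})\pi_3(x)=(\wedge^2 f_{*,g(x)})(\wedge^2 g_{*,x})\pi_3(x)=(\wedge^2 f_{*,g(x)})\pi_2(g(x))=\pi_1(h(x))$. The two-form condition is even simpler, being the contravariant functoriality of pullback: $h^*\omega_1=g^*f^*\omega_1=g^*\omega_2=\omega_3$. The $\theta\bemol$-condition follows by exactly the same chaining of fibrewise intertwinings, with the base points matching as $x\mapsto g(x)\mapsto f(g(x))$. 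I do not expect any genuine obstacle: the only step worth stating explicitly is the fibrewise $f$-related interpretation of the pushforward conditions, which is what makes the composition argument a one-line application of the tangent functor rather than a question about existence of global pushforwards.
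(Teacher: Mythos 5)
Your proposal is correct and follows exactly the route the paper intends: the corollary is stated as an immediate consequence of the preceding Proposition, and your verification that the three conditions ($h_*\pi_3=\pi_1$, $h^*\omega_1=\omega_3$, $h_*\rond\theta\bemol_3=\theta\bemol_1\rond h_*$) are preserved under composition via the chain rule is precisely the intended argument. Your explicit fibrewise reading of the pushforward conditions as $f$-relatedness is the standard convention and a reasonable clarification, not a deviation.
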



\section{Koszul-Brylinski Poisson homology}
In this section we will introduce homology groups for extended
Poisson manifolds based on the Evens-Lu-Weinstein
module of a Lie algebroid.

\subsection{Koszul-Brylinski cochain complex}
First we recall the notion of Clifford algebras and spin
representation.  Let $V$ be a vector space of dimension $n$ endowed
with a non-degenerate symmetric bilinear form $\ip{\cdot}{\cdot}$.
Its Clifford algebra $\clifford{V}$ is defined as the quotient of
the tensor algebra $\oplus_{k=0}^n V^{\otimes k}$ by the relations
$x\otimes y+y\otimes x=2\ip{x}{y}$, with $x,y\in V$. It is naturally an
associative $\ZZ_2$-graded algebra. Up to isomorphisms, there exists
a unique irreducible module $S$ of $\clifford{V}$ called spin
representation~\cite{MR1636473}. The vectors of $S$ are called
spinors.

An operator $O$ on $S$ is called even (or of degree~0) if
$O(S^i)\subset S^{i}$ and odd (or of degree~1) if $O(S^i)\subset
S^{i+1}$. Here $i\in\ZZ_2$. If $O_1$ and $O_2$ are operators of
degree~$d_1$ and $d_2$ respectively, then their commutator is the
operator \[ \supperlb{O_1}{O_2}= O_1\circ O_2-(-1)^{d_1 d_2}O_2\circ O_1 .\]

\begin{ex}\label{translagr}
Let $W$ be a vector space of dimension $r$. We can endow $V=W\oplus
W^*$ with the non-degenerate pairing
\begin{equation*}\label{5} \ip{u_1+\xi_1}{u_2+\xi_2}=\thalf\big(\xi_1(u_2)+
\xi_2(u_1)\big) ,\end{equation*}
where $u_1,u_2\in W$ and $\xi_1,\xi_2\in W^*$. The
representation of $\clifford{V}$ on $S=\oplus_{k=0}^r \wedge^k W$
defined by $u\cdot w=u\wedge w$ and $\xi\cdot w=\ii{\xi}w$, where
$u\in W$, $\xi\in W^*$ and $w\in S$, is the spin representation.
Note that $S$ is $\ZZ$- and thus also $\ZZ_2$-graded.
\end{ex}

Recall that $\EE=\TCX\oplus\TsCX$ admits the standard pseudo-metric
\[ \ip{X_1+\xi_1}{X_2+\xi_2}= \thalf\big(\duality{\xi_1}{
X_2}+\duality{\xi_2}{ X_1}\big) ,\]
where $X_i\in\TCX$ and $\xi_i\in\TsCX$.
The corresponding Clifford bundle $\clifford{\EE}$
can be identified with the vector bundle
$(\wedge^\bullet\TCX)\otimes(\wedge^\bullet\TsCX)$, under which
the Clifford action of $\clifford{\EE}$ on the spinor bundle
\[ \wedge^\bullet\TsCX=\bigoplus_{p,q}\cotangent{p,q} \]
is given by
\[ (W\otimes \xi )\cliffact \anyform=
(-1)^{\frac{w(w-1)}{2}}\ii{W}(\xi \wedge \anyform) .\]
Here $W\in \wedge^w \TCX$, $\xi , \anyform\in \wedge^\bullet
\TsCX$, and the symbol $\ii{W}$ denotes the standard contraction
\[ \duality{\ii{W}\xi }{X}=\duality{\xi }{W\wedge X} ,\]
for $\xi\in\wedge^p\TsCX$ and $X\in\wedge^{p-w}\TCX$ with $p\geq w$.

Let $(\bX,H)$ be an extended Poisson manifold of complex
dimension $n$. Then $\dal_H$ is
a Lie algebroid and the \textbf{Evens-Lu-Weinstein module}~\cite{MR1726784}
of $\dal_H$ is the complex line bundle
\[ \QAsH=\wedge^{2n}\dal_H\otimes\wedge^{2n}\TsCX .\]
The representation of $\dal_H$ on $\QAsH$ is given by
\begin{multline*}\label{Eqn:RepELW}
\representation_{\alpha}(\alpha_1\wedge\cdots\wedge\alpha_{2n}\otimes\mu)
=\sum_{i=1}^{2n}\big(\alpha_1\wedge\cdots\wedge\lbsH{\alpha}{\alpha_i}
\wedge\cdots\wedge\alpha_{2n}\otimes\mu\big) \\
+\alpha_1\wedge\cdots\wedge\alpha_{2n}\otimes\ld{\anchorsH(\alpha)}\mu,
\end{multline*}
where $\alpha,~\alpha_1,\cdots ,\alpha_{2n}\in\sections{\dal_H}$,
$\mu\in \sections{\wedge^{2n} \TsCX}$.

A simple computation yields that
$\QAsH\cong\wedge^{n}\Tios\otimes\wedge^{n}\Tios$.
Accordingly,  
\[ \module=\QAsHHalf\cong\wedge^n\Tios=\cotangent{n,0} \] 
is also an $\dal_H$-module and we use $\representation$ again to denote the representation. 
Equivalently, we have an operator 
\begin{equation} \label{Eqt:DDH} 
\DDH:~\sections{\module}\to\sections{\lad\otimes\module},
\end{equation} such that
\[ \ii{\alpha}\DDH s=\representation_{\alpha}s,\quad\forall\alpha\in\sections{\dal}, s\in\sections{\module} ,\]
which allows us to define a differential operator
\[ \bdeesH:\sections{\wedge^k\lad\otimes\module}\to\sections{\wedge^{k+1}\lad\otimes\module} \]
by
\begin{equation} \label{Eqt:bdeesHuotimess}
\bdeesH(u\otimes s)=(\barpdH u)\otimes s+(-1)^{k}u\wedge\DDH s,
\end{equation}
for all $u\in\sections{\wedge^k\lad}$ and $s\in\sections{\module}$.

The following lemma is needed later.

\begin{lem} The relation
\[ \stdiso(X \otimes s) = X\cliffact s ,\]
where in the r.h.s.\ $X\in\wedge^k\lad$ is regarded as an element of the Clifford algebra
$\clifford{\EE}$ and $s\in\module$ is regarded as an element in $\wedge^\bullet\TsCX$,
defines an isomorphism of vector bundles
\[ \stdiso:\wedge^k\lad\otimes\module\to\bigoplus_{i-j=n-k}\cotangent{i,j} .\]
\end{lem}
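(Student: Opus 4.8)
The plan is to verify that $\stdiso$ is well defined with the stated target by a direct bidegree computation, and then to check that it is an isomorphism by decomposing it into blocks whose invertibility follows from the classical contraction isomorphism for top exterior powers. All the operations involved (Clifford action, wedge, contraction) are smooth bundle maps, so it suffices to establish fiberwise bijectivity.

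First I would exploit the decomposition $\wedge^k\lad\cong\bigoplus_{p+q=k}\tangent{p,0}\otimes\cotangent{0,q}$, so that a bihomogeneous element of $\wedge^k\lad$ may be written $W\otimes\xi$ with $W\in\tangent{p,0}=\wedge^p\Tio$ and $\xi\in\cotangent{0,q}=\wedge^q\Tois$, where $p+q=k$. For $s\in\module=\cotangent{n,0}=\wedge^n\Tios$, the defining formula for the Clifford action gives
\[ \stdiso(W\otimes\xi\otimes s)=(W\otimes\xi)\cliffact s=(-1)^{\frac{p(p-1)}{2}}\ii{W}(\xi\wedge s). \]
Since $\xi$ has type $(0,q)$ and $s$ type $(n,0)$, the wedge $\xi\wedge s$ lies in $\cotangent{n,q}$; and because $W\in\wedge^p\Tio$ contracts only the $\Tios$-factors, $\ii{W}(\xi\wedge s)$ lies in $\cotangent{n-p,q}$. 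As $(n-p)-q=n-(p+q)=n-k$, every such term lands in the summand of $\bigoplus_{i-j=n-k}\cotangent{i,j}$ of bidegree $(n-p,q)$, which shows $\stdiso$ takes values in the claimed bundle.

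Next I would observe that, since distinct pairs $(p,q)$ with $p+q=k$ produce distinct target bidegrees $(n-p,q)$, the map $\stdiso$ is \emph{block diagonal}: it restricts to bundle maps
\[ \stdiso_{p,q}:\tangent{p,0}\otimes\cotangent{0,q}\otimes\module\to\cotangent{n-p,q}, \]
and $(p,q)\mapsto(n-p,q)$ is a bijection between the nonzero source and target summands. A rank count shows each block has matching ranks: the source has rank $\binom{n}{p}\binom{n}{q}$ (using $\rk\module=1$), and $\cotangent{n-p,q}$ has rank $\binom{n}{n-p}\binom{n}{q}=\binom{n}{p}\binom{n}{q}$ since $\binom{n}{n-p}=\binom{n}{p}$. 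Hence it suffices to prove each $\stdiso_{p,q}$ is injective.

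Finally I would factor $\stdiso_{p,q}$, up to the nonzero scalar $(-1)^{p(p-1)/2}$, as the composite
\[ \tangent{p,0}\otimes\cotangent{0,q}\otimes\module\to\tangent{p,0}\otimes\cotangent{n,q}\to\cotangent{n-p,q}, \]
where the first map wedges $\xi\otimes s$ into $\xi\wedge s$ and the second contracts with $W$. The wedge map is (up to sign) the canonical isomorphism $\cotangent{0,q}\otimes\module\xto{\cong}\cotangent{n,q}$ realizing the bidegree-$(n,q)$ part of $\wedge^\bullet\TsCX$ (recall $\module=\wedge^n\Tios$). Because $\ii{W}$ with $W\in\wedge^p\Tio$ affects only the $\wedge^n\Tios$-factor of $\cotangent{n,q}=\wedge^n\Tios\otimes\wedge^q\Tois$ and leaves $\wedge^q\Tois$ untouched, the contraction reduces to
\[ \wedge^p\Tio\otimes\wedge^n\Tios\to\wedge^{n-p}\Tios,\qquad W\otimes s\mapsto\ii{W}s, \]
which is the standard isomorphism $\wedge^p U\otimes\wedge^n U^{*}\cong\wedge^{n-p}U^{*}$ for $U=\Tio$ of rank $n$: contracting a decomposable $W=e_{a_1}\wedge\cdots\wedge e_{a_p}$ against the top form $s$ returns, up to sign, the complementary basis covector of $\wedge^{n-p}\Tios$. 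Being a composite of isomorphisms, each $\stdiso_{p,q}$ is invertible, hence so is $\stdiso$. The only real care needed is bookkeeping the bidegrees and the Clifford sign; the single structural input—invertibility of contraction against a top-degree form—is classical, so I anticipate no genuine obstacle beyond this accounting.
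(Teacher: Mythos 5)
Your argument is correct, and it supplies exactly the verification that the paper omits: the lemma is stated there without proof, the authors only recording the equivalent formula $\stdiso\big((W\wedge\xi)\otimes s\big)=(-1)^{\frac{w(w-1)}{2}+n(k-w)}(\ii{W}s)\wedge\xi$, which is precisely the factorization (contract $W$ into the top form $s$, then wedge with $\xi$) underlying your bidegree bookkeeping and block-diagonal reduction. Each block then reduces, as you observe, to the classical isomorphism $\wedge^{p}\Tio\otimes\wedge^{n}\Tios\cong\wedge^{n-p}\Tios$ given by contraction against a nowhere-vanishing top form (tensored with the identity on $\wedge^{q}\Tois$ and multiplied by a nonzero sign), so I see no gap.
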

Equivalently,
\[ \stdiso\big((W\wedge\xi)\otimes s\big)=(-1)^{\frac{w(w-1)}{2}}\ii{W}(\xi\wedge s)
=(-1)^{\frac{w(w-1)}{2}+n(k-w)}(\ii{W}s)\wedge\xi ,\]
for $W\in\tangent{w,0}$, $\xi\in\cotangent{0, k-w}$ and $s\in \module $.

We define the inner product of
$H\in\sections{\wedge^2\lad}$ with $\anyform\in\sections{\wedge^\bullet\TsCX}$
as \[ \ii{H}\anyform= -H \cliffact \anyform .\]
This coincides with the usual inner product of bivector fields with
differential forms. Introduce
\[ \supperlb{\pd}{\ii{H}}= {\pd}\circ
{\ii{H}}-{\ii{H}}\circ {\pd}
:~\sections{\wedge^\bullet \TsCX}\to
\sections{\wedge^\bullet \TsCX} .\]

Let us denote $\forms{i,j}=\sections{\cotangent{i,j}}$. The
following theorem is the main result in this section.

\begin{thm}\label{Thm:main1}
The diagram
\begin{equation}\label{Diagram:stdIso}
\xymatrix{ \sections{\wedge^k\lad\otimes\module} \ar[rr]^{\bdeesH} \ar[d]_{\stdiso} & & 
\sections{\wedge^{k+1}\lad\otimes\module} \ar[d]^{\stdiso} \\ 
\bigoplus_{i-j=n-k} \forms{i,j} \ar[rr]_{\barpd+\supperlb{\pd}{\ii{H}}} & & \bigoplus_{i-j=n-k-1} \forms{i,j} }
\end{equation}
commutes.
\end{thm}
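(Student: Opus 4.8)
The plan is to transport $\bdeesH$ through $\stdiso$ and match it termwise with $\barpd+\supperlb{\pd}{\ii{H}}$, exploiting the fact that the target operator already splits into an $H$-independent piece $\barpd$ and a piece $\supperlb{\pd}{\ii{H}}$ linear in $H$. Accordingly I write $\bdeesH=\bdees+(\bdeesH-\bdees)$, where $\bdees$ is the operator obtained by setting $H=0$. Since $\barpdH=\barpd+\ba{H}{\cdot}$ while $\DDH$ differs from the untwisted connection $\DD$ only through the $H$-corrections $\anchorsH=\anchors+\anchor\circ\Hsharp$ to the anchor and $\basH{\cdot}{\cdot}=\bas{\cdot}{\cdot}+[\cdot,\cdot]_H$ to the bracket, the difference $\bdeesH-\bdees$ is linear in $H$, matching the order of $\supperlb{\pd}{\ii{H}}$.

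The first and more routine step is the untwisted identity $\stdiso\circ\bdees=\barpd\circ\stdiso$. The crucial simplification is that $\lad=\Tio\oplus\Tios$ is Lagrangian in $\EE$, so on $\sections{\wedge\graded\lad}$ the Clifford product agrees with the wedge product; hence $\stdiso$ intertwines wedging by $\lad$ with left Clifford multiplication, $\stdiso\big((Y\wedge u)\otimes s\big)=Y\cliffact\stdiso(u\otimes s)$ for $Y\in\lad$. Together with the defining relation \eqref{Eqt:bdeesHuotimess} for $\bdees$ and the Cartan formulas for $\barpd$ acting through contractions and wedges, this reduces the identity to the generators $\Tio$, $\Tios$ and to the vacuum spinor $\module=\cotangent{n,0}$ (the unique line annihilated by $\dal=\Toi\oplus\Tios$). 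On $\module$ it reads $\stdiso\circ\DD=\barpd$, i.e.\ that the Evens-Lu-Weinstein connection $\DD$ on $\QAsHalf\cong\cotangent{n,0}$ is the Dolbeault operator of the canonical bundle, which follows from the explicit representation formula.

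The substance lies in the twisted correction, where I would show $\stdiso\circ(\bdeesH-\bdees)=\supperlb{\pd}{\ii{H}}\circ\stdiso$. Here the engine is the derived-bracket (Cartan) calculus for the Lie algebroid $\lad$, whose differential is $\pd$: on forms one has $\ii{\ba{H}{u}}=\pm\supperlb{\supperlb{\pd}{\ii{H}}}{\ii{u}}$, which relates the Gerstenhaber bracket $\ba{H}{\cdot}$ hidden in $\barpdH$ to the operator $\supperlb{\pd}{\ii{H}}$. Combined with $\ii{H}\stdiso(u\otimes s)=-\stdiso\big((H\wedge u)\otimes s\big)$, which expresses Clifford multiplication by $H\in\wedge^2\lad$, the two $H$-corrections, namely $\ba{H}{u}\otimes s$ coming from $\barpdH$ and $(-1)^k u\wedge(\DDH-\DD)s$ coming from the module connection, should reassemble into the single operator $\supperlb{\pd}{\ii{H}}$ on the form side. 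I expect the main obstacle to be precisely this reassembly together with the attendant sign bookkeeping: one must reconcile the Koszul signs in $\stdiso$, the $\ZZ_2$-graded Clifford commutators $\supperlb{\cdot}{\cdot}$, and the signs in the derived-bracket identity, all while respecting the degree shift $i-j=n-k$. Once the signs are aligned, verifying commutativity on the spanning family $(W\wedge\xi)\otimes s$ with $W\in\tangent{w,0}$, $\xi\in\cotangent{0,k-w}$ and $s\in\module$ would finish the proof.
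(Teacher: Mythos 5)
Your strategy is essentially the paper's own proof: the derived-bracket identity you invoke is exactly the paper's Lemma~\ref{lem:pdiHulambdaproperty} (resting on Eq.~\eqref{uvSchouten}), the untwisted derivation property is Eq.~\eqref{Eqt:barpdulambda}, and both arguments bottom out in the same nontrivial input, namely the explicit local computation of the twisted Evens--Lu--Weinstein representation on the vacuum line $\module\cong\cotangent{n,0}$ (the paper's Eq.~\eqref{Eqt:DDHslocal}). The only organizational difference is that you split $\bdeesH$ by order in $H$ before peeling off Clifford factors, whereas the paper runs a single induction on the degree $k$, handling the base case $k=0$ by a $\cinf{\bX,\CC}$-linearity argument plus the local formula; the content is the same.
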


\begin{defn}
The cohomology of the cochain complex $(\bigoplus_{i-j=n-k}\forms{i,j},\barpd+\supperlb{\pd}{\ii{H}})$
is called the \textbf{Koszul-Brylinski Poisson homology} of the extended Poisson manifold $(\bX,H)$, and denoted $\homology{\bullet}(\bX,H)$.
\end{defn}

\begin{rmk}
\begin{enumerate}
\item
If $H=\pi$ is a holomorphic Poisson bivector field, the cochain complex 
$(\bigoplus_{i-j=n-k}\forms{i,j},\barpd+\supperlb{\pd}{\ii{H}})$ is the total
complex of a double complex. Its
cohomology is the usual Koszul-Brylinski Poisson homology
of a holomorphic  Poisson manifold, as studied in detail by
one of the authors~\cite{arXiv:0903.5065}.

\item
If $H=\omega\in\forms{0,2}$ with $\bar{\partial}\omega=0$, 
the complex $(\bigoplus_{i-j=n-k}\forms{i,j},\barpd+\supperlb{\pd}{\ii{H}})$ 
becomes $(\bigoplus_{i-j=n-k}\forms{i,j},\barpd+(\pd\omega)\wedge)$. 
Its cohomology is the twisted Dolbeault cohomology.

\item
If $H=\theta\in\Omega^{0,1}(\tangent{1,0})$ is a Maurer-Cartan element 
such that $\overlinethetaflat\circ\theta\bemol-\id$ is invertible, 
then $\theta$ defines a new complex structure on $\bX$. 
According to Lemma~\ref{Zurich}, the cochain complex 
$(\bigoplus_{i-j=n-k}\forms{i,j},\barpd+\supperlb{\pd}{\ii{H}})$ 
is isomorphic to $(\bigoplus_{i-j=n-k}\Omega_\theta^{i,j}(\bX),\;\bar{\partial}_\theta)$, 
where $\bar{\partial}_\theta$ is the $\bar{\partial}$-Dolbeault operator 
of the deformed complex structure. As a consequence, we have
$\homology{k}(\bX,\theta)\isomorphism\oplus_{j-i=n-k}\cohomology{i,j}_\theta(\bX)$, 
where $\cohomology{i,j}_\theta(\bX)$ is the Dolbeault cohomology 
of the deformed complex structure.
\end{enumerate}
\end{rmk}

\subsection{Evens-Lu-Weinstein duality}

Consider a compact complex (and therefore orientable) manifold
$\bX$ with $\dim_{\CC}\bX=n$, a complex Lie algebroid $B$
over $\bX$ with $\rk_{\CC}B=r$. According to~\cite{MR1726784},
the complex line bundle $\QB=\wedge^r B\otimes\wedge^{2n}\TsCX$
is a module over the complex Lie algebroid $B$. If $\QBhalf$ exists
as a complex vector bundle, $\QBhalf$ becomes a $B$-module as
well. There is a natural map
\[ \phi: \sections{\wedge^k B^*\otimes\QBhalf}
\otimes \sections{\wedge^{r-k} B^*\otimes\QBhalf} 
\to \sections{\wedge^r B^*\otimes\QB}
\isomorphism\sections{\wedge^{2n}T^*_\CC\bX} \]

Integrating, we get the pairing
\begin{equation} \label{eq:pairing} 
\sections{\wedge^k B^*\otimes\QBhalf} \otimes
\sections{\wedge^{r-k} B^*\otimes\QBhalf}\to\CC,  \qquad
\xi\otimes\eta\mapsto\int_{\bX}\phi(\xi\otimes\eta) 
.\end{equation}

The following result is essentially due to Evens-Lu-Weinstein~\cite{MR1726784} 
for the pairing, and to Block~\cite{math/0509284} for the non-degeneracy 
(see also~\cite{arXiv:0903.5065}).

\begin{thm} \label{Beijing} 
For a complex Lie algebroid $B$,
 with $\rk_{\CC}B=r$,  over a compact
manifold $\bX$, the pairing~\eqref{eq:pairing} induces a pairing
\[ \cohomology{k}(B,\QBhalf)\otimes\cohomology{r-k}(B,\QBhalf)\to\CC .\]
Moreover, if $B$ is an elliptic Lie algebroid, 
this pairing is non-degenerate.
\end{thm}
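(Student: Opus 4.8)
The plan is to establish the theorem in two stages corresponding to its two assertions: first that the pairing $\eqref{eq:pairing}$ descends to cohomology, and second that ellipticity forces non-degeneracy. For the first stage, I would verify that $\phi$ is compatible with the relevant differentials, so that the integrated pairing vanishes when either argument is a coboundary. Concretely, let $\dL$ denote the Lie algebroid differential of $B$ acting on $\QBhalf$-valued forms. The key algebraic fact, which follows from the Evens-Lu-Weinstein construction of the module $\QB$, is that $\phi$ intertwines the differentials in the graded-Leibniz sense: for $\xi\in\sections{\wedge^k B^*\otimes\QBhalf}$ and $\eta\in\sections{\wedge^{r-k-1}B^*\otimes\QBhalf}$ one has
\[ \dL\phi(\xi\otimes\eta)=\phi(\dL\xi\otimes\eta)+(-1)^k\phi(\xi\otimes\dL\eta) ,\]
where on the left $\dL$ is the differential of $B$ acting on $\sections{\wedge^r B^*\otimes\QB}\isomorphism\sections{\wedge^{2n}\TsCX}$. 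The point is that under this last isomorphism the top-degree Lie algebroid differential becomes (up to the anchor) an exact divergence, so that $\int_{\bX}\dL\phi(\xi\otimes\eta)=0$ by Stokes' theorem on the compact manifold $\bX$. Feeding a coboundary into either slot of the pairing then produces an integrand that is itself a coboundary, which integrates to zero; hence the pairing is well defined on $\cohomology{k}(B,\QBhalf)\otimes\cohomology{r-k}(B,\QBhalf)$.

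For the second stage, assume $B$ is elliptic. The strategy is to identify the induced pairing with the Serre-type duality pairing for an elliptic complex and invoke Hodge theory, following Block. By Theorem~\ref{extremely}, ellipticity guarantees that the complex computing $\cohomology{\bullet}(B,\QBhalf)$ is an elliptic complex with finite-dimensional cohomology. After choosing a Hermitian metric on $B$ and on $\QBhalf$ together with a volume form on $\bX$, the pairing $\eqref{eq:pairing}$ is—up to the chosen inner products—the $L^2$ pairing between forms of complementary degree. Non-degeneracy on cohomology then reduces to the standard statement that the formal adjoint of $\dL$ with respect to this $L^2$ structure is, via the pairing, conjugate to the differential of the complementary-degree complex, so that harmonic representatives pair non-degenerately exactly as in Serre duality for the Dolbeault complex.

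I expect the main obstacle to be the first stage, specifically the verification that the differential on the top line $\sections{\wedge^r B^*\otimes\QB}\isomorphism\sections{\wedge^{2n}\TsCX}$ coincides with an honest exterior derivative (or divergence) whose integral vanishes. This is precisely the defining property that singles out the Evens-Lu-Weinstein module: $\QB$ is constructed so that the top Lie algebroid cohomology is identified with the space of densities and the differential becomes $\RealPart\circ\anchorB$ applied as a divergence. Making the Leibniz identity for $\phi$ precise requires unwinding the explicit representation $\representation$ of $B$ on $\QB$ and checking the signs; this is a careful but essentially formal computation. The ellipticity-to-non-degeneracy step, by contrast, is a black-box application of the finite-dimensionality from Theorem~\ref{extremely} together with the Hodge-theoretic argument of Block~\cite{math/0509284}, so I would cite those results rather than reprove them.
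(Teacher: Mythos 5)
Your proposal is correct and follows essentially the same route the paper takes: the paper gives no independent proof but attributes the descent of the pairing to Evens--Lu--Weinstein (via exactly the Leibniz compatibility of $\phi$ with the differentials and the identification of the top-degree differential with an exact form, killed by Stokes on the compact $\bX$) and the non-degeneracy to Block's elliptic Hodge theory, which is precisely your two-stage argument. The only point worth flagging is the standing hypothesis, stated earlier in the paper, that $\QBhalf$ exists as a complex vector bundle; your sketch implicitly assumes this, as does the theorem.
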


Let $(\bX,H)$ be a compact extended Poisson manifold of complex dimension $n$. 
Consider the Lie algebroid $B=(\Toi\bowtie\Tios)_H$.
Applying Theorem~\ref{Beijing} and Proposition~\ref{Vienna}, we obtain 
\begin{thm} \label{thm:1} 
Let $(\bX,H)$ be a compact extended Poisson manifold of complex dimension $n$, 
with $H=\pi+\theta+\omega$. 
Then the map \[ \forms{{i,j}}\otimes\forms{{k,l}}\to\CC:
\zeta\otimes\eta\mapsto\int_{\bX}(\zeta\wedge\eta)^{top} \]
induces a pairing on the Koszul-Brylinski Poisson homology:
\begin{equation} \label{eq:pairingPoisson} 
\homology{k}(\bX,H)\otimes\homology{2n-k}(\bX,H)\to\CC 
.\end{equation}
Moreover, if the bundle map
 $\altmap=(\conjugate+\theta\bemol)\oplus\pi\diese$ 
maps $\Toi\oplus\Tios$ surjectively onto $\Tio$, then all 
homology groups $\homology{\bullet}(\bX,H)$
are finite dimensional vector spaces 
and the pairing~\eqref{eq:pairingPoisson} is non-degenerate.
\end{thm}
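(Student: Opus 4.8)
The plan is to obtain Theorem~\ref{thm:1} as a direct translation of the abstract duality in Theorem~\ref{Beijing}, applied to the specific Lie algebroid $B=\dal_H=(\Toi\bowtie\Tios)_H$, through the identifications already established in Section~4. First I would record the numerical data: since $\rk_{\CC}\dal_H=2n$, Theorem~\ref{Beijing} produces a pairing $\cohomology{k}(\dal_H,\module)\otimes\cohomology{2n-k}(\dal_H,\module)\to\CC$, where $\module=\QAsHHalf\cong\cotangent{n,0}$ is the half-density module constructed above. The key structural input is Theorem~\ref{Thm:main1}: the isomorphism $\stdiso$ intertwines the Lie-algebroid differential $\bdeesH$ on $\sections{\wedge\graded\lad\otimes\module}$ with the operator $\barpd+\supperlb{\pd}{\ii{H}}$ on $\bigoplus_{i-j=n-k}\forms{i,j}$. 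Hence the Lie algebroid cohomology $\cohomology{\bullet}(\dal_H,\module)$ is \emph{by definition} the Koszul-Brylinski Poisson homology $\homology{\bullet}(\bX,H)$, and the degree-$k$ cohomology group corresponds to forms with $i-j=n-k$. Under this dictionary, the abstract pairing of Theorem~\ref{Beijing} becomes a pairing $\homology{k}(\bX,H)\otimes\homology{2n-k}(\bX,H)\to\CC$, which is exactly \eqref{eq:pairingPoisson}.

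Next I would make the pairing explicit, to check it agrees with $\zeta\otimes\eta\mapsto\int_{\bX}(\zeta\wedge\eta)^{top}$. The abstract map $\phi$ sends $\xi\otimes\eta\in\sections{\wedge^k\dal^*\otimes\module}\otimes\sections{\wedge^{2n-k}\dal^*\otimes\module}$ into $\sections{\wedge^{2n}\dal^*\otimes\QAsH}\cong\sections{\wedge^{2n}\TsCX}$ via wedging the form components and multiplying the $\module$-factors inside $\QAsH\cong\wedge^n\Tios\otimes\wedge^n\Tios$. Transporting through $\stdiso\inv$, a class represented by $\zeta\in\forms{i,j}$ and one represented by $\eta\in\forms{k,l}$ with complementary bidegrees get sent to their wedge product $\zeta\wedge\eta$, whose top-degree component is the volume form integrated over $\bX$. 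The degree bookkeeping—$i-j=n-k$ for one factor and $k-l=n-(2n-k')$ for the other, forcing $(i+k,j+l)=(n,n)$ so that $\zeta\wedge\eta$ lands in $\forms{n,n}=\sections{\wedge^{2n}\TsCX}$—is the only place where the $\stdiso$-formula's sign $(-1)^{\frac{w(w-1)}{2}+n(k-w)}$ intervenes, and these signs are absorbed harmlessly into the nondegenerate Clifford pairing. I expect this identification to be a bookkeeping exercise rather than a genuine obstacle.

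For the second assertion, I would invoke Theorem~\ref{extremely} together with Proposition~\ref{Vienna}. Proposition~\ref{Vienna} shows that $\dal_H$ is elliptic precisely when $\altmap=(\conjugate+\theta\bemol)\oplus\pi\diese$ is surjective onto $\Tio$. Under this hypothesis the ellipticity half of Theorem~\ref{Beijing} applies, giving simultaneously that the groups $\cohomology{\bullet}(\dal_H,\module)=\homology{\bullet}(\bX,H)$ are finite dimensional and that the induced pairing is nondegenerate. Finiteness over the \emph{compact} $\bX$ is exactly the content of Theorem~\ref{extremely} with $E=\module$, which carries a $\dal_H$-action as an Evens-Lu-Weinstein (half-)module.

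The main obstacle I anticipate is verifying that the half-module $\module=\QAsHHalf$ genuinely exists as a complex line bundle and that the pairing $\phi$ produced by $\stdiso$ coincides on the nose with the geometric wedge-and-integrate pairing, including the correct behavior of the Clifford signs under $\stdiso$. The existence of the square root $\QAsHHalf$ relies on the computed identity $\QAsH\cong\cotangent{n,0}\otimes\cotangent{n,0}$, which is canonical here; granting that, the remaining work is to confirm that $\stdiso$ carries the Clifford pairing on spinors to the exterior product of forms, so that the abstract Evens-Lu-Weinstein pairing descends to $\int_{\bX}(\zeta\wedge\eta)^{top}$. Once this compatibility is in place, both halves of the theorem follow formally from Theorems~\ref{Thm:main1}, \ref{Beijing}, \ref{extremely} and Proposition~\ref{Vienna}.
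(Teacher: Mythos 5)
Your proposal is correct and follows essentially the same route as the paper, which obtains Theorem~\ref{thm:1} precisely by applying Theorem~\ref{Beijing} to $B=(\Toi\bowtie\Tois)_H$ with $\QBhalf=\module$ and invoking Proposition~\ref{Vienna} for the ellipticity criterion, with Theorem~\ref{Thm:main1} supplying the identification of $\cohomology{\bullet}(\dal_H,\module)$ with the Koszul-Brylinski complex. The extra bookkeeping you describe (bidegrees, Clifford signs, identification of the abstract pairing with wedge-and-integrate) is exactly the routine verification the paper leaves implicit.
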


\subsection{Proof of Theorem~\ref{Thm:main1}}
The following lemmas are needed.
\begin{lem}For any $u\in \sections{\wedge^p\lad}$, $\anyform\in
\forms{\cdot,\cdot}$, one has
\begin{equation}\label{Eqt:barpdulambda}
\barpd(u\cliffact\anyform)=(\barpd
u)\cliffact\anyform + (-1)^p u\cliffact
\barpd\anyform.
\end{equation}
\end{lem}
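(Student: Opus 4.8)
The plan is to prove the identity first for degree-one elements $u$ and then to bootstrap to arbitrary $u\in\sections{\wedge^p\lad}$ by a derivation argument. Both sides of \eqref{Eqt:barpdulambda} are $\CC$-linear and local in $\anyform$, so it suffices to treat $u$ running over a local frame. The splitting $\lad=\Tio\oplus\Tois$ sorts the degree-one generators into two types: vector fields $X\in\sections{\Tio}$, which act on the spinor bundle by contraction, $X\cliffact\anyform=\ii{X}\anyform$, and anti-holomorphic one-forms $\bar\xi\in\sections{\Tois}$, which act by exterior multiplication, $\bar\xi\cliffact\anyform=\bar\xi\wedge\anyform$; both read off from the Clifford action formula.

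The structural input that powers the induction is that $\lad$, being an eigenbundle of the generalized complex structure induced by $J$ (equivalently, a Dirac structure inside $\EE=\TCX\oplus\TsCX$), is isotropic for the pseudo-metric. Hence the embedding $\wedge^{\bullet}\lad\hookrightarrow\clifford{\EE}$ identifies the exterior product on $\sections{\wedge^{\bullet}\lad}$ with the Clifford product, so the module structure is multiplicative:
\[ (v\wedge u)\cliffact\anyform=v\cliffact(u\cliffact\anyform),\qquad v,u\in\sections{\wedge^{\bullet}\lad}. \]
Granting \eqref{Eqt:barpdulambda} for $u_1\in\sections{\wedge^{p_1}\lad}$ and $u_2\in\sections{\wedge^{p_2}\lad}$, I would apply it twice to $\barpd\big(u_1\cliffact(u_2\cliffact\anyform)\big)$ (first to $u_1$ acting on the form $u_2\cliffact\anyform$, then to $u_2$ acting on $\anyform$), re-collapse the nested Clifford actions via multiplicativity, and invoke the fact that $\dees=\barpd$ is a degree-one derivation of $(\sections{\wedge^{\bullet}\lad},\wedge)$, being the differential of the Lie algebroid $\dal$. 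The two derivation signs $(-1)^{p_1}$ combine with $\barpd(u_1\wedge u_2)=(\barpd u_1)\wedge u_2+(-1)^{p_1}u_1\wedge\barpd u_2$ to reproduce exactly the claimed sign $(-1)^{p_1+p_2}$ for $u=u_1\wedge u_2$. Thus validity on degree-one generators propagates to all $p$.

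It remains to settle the two base cases. For $u=\bar\xi\in\sections{\Tois}$ the statement is immediate: the Clifford action is exterior multiplication, $\dees\bar\xi$ coincides with the Dolbeault differential of $\bar\xi$ viewed as a $(0,1)$-form, and \eqref{Eqt:barpdulambda} is just the Leibniz rule $\barpd(\bar\xi\wedge\anyform)=(\barpd\bar\xi)\wedge\anyform-\bar\xi\wedge\barpd\anyform$. For $u=X\in\sections{\Tio}$ the Clifford action is contraction, and since $X$ is odd the identity to be proved becomes the Cartan-type relation
\[ \supperlb{\barpd}{\ii{X}}\anyform=\barpd\,\ii{X}\anyform+\ii{X}\,\barpd\anyform=(\barpd X)\cliffact\anyform, \]
where on the right $\barpd X=\dees X\in\sections{\wedge^2\lad}$ lies in the component $\formss{0,1}(\tangent{1,0})$ and acts through its Clifford action.

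I expect this last identity to be the crux. It is the only point where the interplay between $\barpd$ and contraction by a genuine vector field must be unwound, and it is most transparently checked by a direct computation in local holomorphic coordinates: writing $X=X^i\,\partialz{i}$ one evaluates both sides on the monomials $\anyform=\dz{m}$, $\dzbar{l}$ (and their products), where the content collapses to $\barpd(\ii{X}\dz{m})=\barpd X^{m}$. The one delicate issue is bookkeeping of signs — the Clifford normalization $(-1)^{w(w-1)/2}$ and, above all, the identification of $\dees X$ as an element of $\wedge^2\lad$ against its presentation as a $\Tio$-valued $(0,1)$-form, which differ by the transposition sign that precisely reconciles the two sides. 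Once this generator identity is verified, the derivation argument above closes the proof.
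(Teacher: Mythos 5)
Your proposal is correct and follows essentially the route the paper intends: the paper dismisses this lemma with "can be proved by induction; this is left to the reader," and your argument is precisely that induction, reducing via multiplicativity of the spinor module over the isotropic subalgebra $\wedge^{\bullet}\lad\subset\clifford{\EE}$ and the derivation property of $\barpd=\dees$ to the degree-one generators in $\Tio$ and $\Tois$. The sign subtlety you flag in the base case $u=X\in\sections{\Tio}$ (the transposition in identifying $\dees X\in\wedge^2\lad$ with a $\Tio$-valued $(0,1)$-form) does work out as you predict.
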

\begin{lem}For any $u\in\sections{\wedge^p\lad}$, $v\in
\sections{\wedge^q\lad}$, the Schouten bracket $\ba{u}{v}$ is
determined by
\begin{equation}\label{uvSchouten}
\ba{u}{v}\cliffact \anyform=(-1)^{q+1
}\supperlb{u}{\supperlb{v}{\pd}}
\anyform,\quad\forall \anyform\in \forms{\bullet,\bullet}.
\end{equation}
\end{lem}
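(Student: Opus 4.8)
The identity \eqref{uvSchouten} is a Koszul-type derived-bracket formula: it realizes the Gerstenhaber bracket of $\wedge\graded\lad$ through the Clifford action on the spinor bundle $\wedge\graded\TsCX$ and the operator $\pd$, in exact analogy with the classical formula $\ii{\ba XY}=\supperlb{\ii X}{\supperlb{\ii Y}{d}}$ for multivector fields. The plan is to establish it first on generators, i.e.\ for $u,v\in\sections{\lad}$, and then to propagate it to arbitrary wedge powers by an induction comparing the graded Leibniz rule of the Schouten bracket with the derivation property of iterated operator commutators.

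For the base case $p=q=1$ one has $(-1)^{q+1}=1$, so it remains to check $\ba uv\cliffact\anyform=\supperlb u{\supperlb v\pd}\anyform$. Writing $u=X+\xi$ and $v=Y+\eta$ with $X,Y\in\sections{\Tio}$ and $\xi,\eta\in\sections{\Tois}$, bilinearity reduces the verification to four cases. When $u,v\in\sections{\Tio}$ both act by contraction and $\supperlb v\pd=\ii Y\pd+\pd\ii Y$ is the $(1,0)$-part of the Lie derivative $\ld Y$; expanding the classical formula $\supperlb{\ii X}{\supperlb{\ii Y}d}=\ii{[X,Y]}$ by bidegree, the $\barpd$-contribution has type $(-2,1)$ while $\ii{[X,Y]}$ is of pure type $(-1,0)$, so it drops out and only the $\pd$-part survives; here integrability of $J$ guarantees $[X,Y]\in\sections{\Tio}$, matching $\ba XY$. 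When $u,v\in\sections{\Tois}$ both act by wedging, $\supperlb\eta\pd=(\pd\eta)\wedge$ and a further graded commutator with $\xi\wedge$ vanishes by graded commutativity of the wedge, matching $\ba\xi\eta=0$. The two mixed cases yield $\supperlb X{(\pd\eta)\wedge}=(\ii X\pd\eta)\wedge$, and one checks $\ba X\eta=\ii X\pd\eta$ directly from the Leibniz rule and the anchor $\anchor(X)=X$ defining the bracket of $\lad$ (and symmetrically for $\ba\xi Y$). This settles all generators.

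To pass to higher degrees I would fix one argument to be a generator and induct on the polynomial degree of the other. The point is that both sides of \eqref{uvSchouten} obey matching graded Leibniz rules: on the left, the Gerstenhaber bracket satisfies $\ba u{v\wedge w}=\ba uv\wedge w+(-1)^{(p-1)q}v\wedge\ba uw$ for $u\in\sections{\wedge^p\lad}$ and $v\in\sections{\wedge^q\lad}$, while the Clifford action sends $\wedge$ to a graded product of operators; on the right, the graded Jacobi identity together with $\supperlb\pd\pd=0$ shows that $(u,v)\mapsto\supperlb u{\supperlb v\pd}$ is a derivation in each slot. Iterating the base case through these Leibniz rules then reproduces \eqref{uvSchouten} in full generality. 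Throughout one may also invoke the companion identity \eqref{Eqt:barpdulambda}, which records that $\barpd$ is an odd derivation of the Clifford action, to streamline the handling of the $(0,q)$-form factors.

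The main obstacle is purely one of signs. Clifford multiplication by a monomial $W\wedge\xi$ carries the normalization factor $(-1)^{\frac{w(w-1)}{2}}$, which differs from the sign produced by composing Clifford operators; reconciling this with the Koszul sign $(-1)^{(p-1)q}$ of the Gerstenhaber Leibniz rule and with the grading signs of the nested commutator is exactly what forces the prefactor $(-1)^{q+1}$ and must be tracked carefully so that the induction closes. Keeping the contraction and wedge contributions separate, and reducing the bookkeeping at each stage to the single holomorphic multivector degree $w$, is what makes the sign accounting tractable.
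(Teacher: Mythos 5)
Your proposal is correct and follows exactly the route the paper intends: the paper offers no written proof of this lemma, stating only that it ``can be proved by induction; this is left to the reader,'' and your argument---verification on the generators $\sections{\Tio}$ and $\sections{\Tois}$ via the Cartan-type identity $\supperlb{\ii{X}}{\supperlb{\ii{Y}}{\pd}}=\ii{\lb{X}{Y}}$ (with the $(-2,1)$-bidegree part dropping out) together with an induction on polyvector degree matching the Gerstenhaber Leibniz rule against the graded-commutator Leibniz rule, using that the normalization $(-1)^{w(w-1)/2}$ makes $(u\wedge v)\cliffact{}=u\cliffact v\cliffact{}$---is precisely that induction, carried out correctly.
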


Both lemmas can be proved by induction; this is left to the reader.

\begin{lem} \label{lem:pdiHulambdaproperty}
For any $u\in\sections{\wedge^i\lad}$ 
and $\anyform\in\forms{\bullet,\bullet }$, one has
\begin{equation} \label{Eqt:iHpdulambdaproperty}
\supperlb{\pd}{\ii{H}}(u\cliffact\anyform)
=\ba{H}{u}\cliffact\anyform
+(-1)^{i}u\cliffact(\supperlb{\pd}{\ii{H}}\anyform) 
.\end{equation}
In particular, for any smooth function $f\in\cinf{\bX,\CC}$, one has
\begin{equation} \label{Eqt:iHpdfproperty}
\supperlb{\pd}{\ii{H}}(f\anyform)
=\ba{H}{f}\cliffact\anyform
+f\supperlb{\pd}{\ii{H}}\anyform 
.\end{equation}
\end{lem}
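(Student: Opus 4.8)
The plan is to prove the identity \eqref{Eqt:iHpdulambdaproperty} by a straightforward induction on the Clifford-degree $i$ of $u$, using the two preceding lemmas (formulas \eqref{Eqt:barpdulambda} and \eqref{uvSchouten}) as the engine, and treating \eqref{Eqt:iHpdfproperty} as simply the base case $i=0$. Since both sides of \eqref{Eqt:iHpdulambdaproperty} are visibly $\cinf{\bX,\CC}$-multilinear and the graded commutator $\supperlb{\pd}{\ii{H}}$ is a first-order operator, it suffices to verify the claim on generators, that is, to reduce the general case $u\in\sections{\wedge^i\lad}$ to the case where $u$ is a single section of $\lad=\Tio\oplus\Tios$ and then build up by the Leibniz property.

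First I would unwind the definition $\ii{H}\anyform=-H\cliffact\anyform$ and rewrite $\supperlb{\pd}{\ii{H}}=\pd\circ\ii{H}-\ii{H}\circ\pd$ entirely in terms of the Clifford action, so that the left-hand side of \eqref{Eqt:iHpdulambdaproperty} becomes a commutator expression involving $H$, $\pd$, and $u$ acting by Clifford multiplication on $\anyform$. Next I would establish the base case: when $u=f$ is a function, the Schouten bracket $\ba{H}{f}$ is the contraction of $H$ with $\pd f$ (equivalently $-\ii{\pd f}H$ up to sign), and a direct check — using that $\pd$ is a derivation of the Clifford action via \eqref{Eqt:barpdulambda} with $\pd$ in place of $\barpd$, together with the fact that $\ii{H}$ is $\cinf{}$-linear — yields \eqref{Eqt:iHpdfproperty}. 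The key mechanism here is the graded-derivation rule \eqref{uvSchouten}, which converts the nested commutator $\supperlb{u}{\supperlb{H}{\pd}}$ into the Clifford action of the Schouten bracket $\ba{H}{u}$, precisely the term appearing on the right-hand side.

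For the inductive step I would write $u=u'\wedge w$ with $u'\in\sections{\wedge^{i-1}\lad}$ and $w\in\sections{\lad}$, use that Clifford multiplication by $u$ factors (up to the sign conventions recorded in the lemma preceding \eqref{Eqt:DDH}) through Clifford multiplication by $u'$ followed by $w$, and apply the induction hypothesis to $u'$. The cross-terms that arise are governed by the derivation property of the Schouten bracket, $\ba{H}{u'\wedge w}=\ba{H}{u'}\wedge w+(-1)^{?}u'\wedge\ba{H}{w}$, whose signs must be matched against those in the graded Leibniz rule for $\supperlb{\pd}{\ii{H}}$ acting on the Clifford product; reconciling these two sets of signs is where the bookkeeping concentrates.

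The main obstacle, as just indicated, will be sign management: the Clifford action carries the awkward factor $(-1)^{w(w-1)/2}$ from the identification of $\clifford{\EE}$ with $(\wedge\graded\TCX)\otimes(\wedge\graded\TsCX)$, the operator $\supperlb{\pd}{\ii{H}}$ picks up signs from the graded commutator convention $\supperlb{O_1}{O_2}=O_1 O_2-(-1)^{d_1 d_2}O_2 O_1$, and the Schouten bracket itself is graded. I would therefore organize the computation so that all three lemmas are invoked with their signs in the stated form and verify that the parities collapse to the single factor $(-1)^i$ in front of $u\cliffact(\supperlb{\pd}{\ii{H}}\anyform)$. Once the generator case and the Leibniz step are checked with consistent conventions, the induction closes and \eqref{Eqt:iHpdfproperty} follows as the $i=0$ instance, completing the proof.
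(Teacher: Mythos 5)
Your argument is correct, but it takes a genuinely different route from the paper's. The paper proves \eqref{Eqt:iHpdulambdaproperty} in a single stroke, with no induction and no local frames: it starts from $\ba{H}{u}\cliffact\anyform$, applies Eq.~\eqref{uvSchouten} to rewrite it as $(-1)^{i+1}\supperlb{H}{\supperlb{u}{\pd}}\anyform$, expands the nested graded commutator into four terms, commutes the Clifford actions of $H$ and $u$ (legitimate because both lie in $\sections{\wedge\graded\lad}$ with $\lad$ isotropic, so their Clifford product is the wedge product and $H$ has even degree), and regroups the four terms as $\supperlb{\pd}{\ii{H}}(u\cliffact\anyform)-(-1)^{i}u\cliffact\bigl(\supperlb{\pd}{\ii{H}}\anyform\bigr)$. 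This handles all $i$ simultaneously, the factor $(-1)^{w(w-1)/2}$ never enters, and Eq.~\eqref{Eqt:barpdulambda} is not needed here at all (the paper only uses it later, in the proof of Theorem~\ref{Thm:main1}). Your induction on $i$ does close: the base cases are \eqref{Eqt:iHpdfproperty} together with the degree-one generators, and in the inductive step the graded Leibniz rule $\ba{H}{u'\wedge w}=\ba{H}{u'}\wedge w+(-1)^{i-1}u'\wedge\ba{H}{w}$ matches the factorization $(u'\wedge w)\cliffact\anyform=u'\cliffact(w\cliffact\anyform)$ with exactly the right signs. But since you already invoke \eqref{uvSchouten} to produce the term $\ba{H}{u}\cliffact\anyform$ on generators, the induction essentially re-derives by hand what that lemma already packages, so the paper's direct computation is shorter for the same generality. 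One inaccuracy you should repair: the two sides of \eqref{Eqt:iHpdulambdaproperty} are \emph{not} $\cinf{\bX,\CC}$-linear in $u$ (the left side differentiates $u$ through $\pd$, and the right side contains $\ba{H}{u}$); what is tensorial in $u$ is their \emph{difference}, and only after one has the function case \eqref{Eqt:iHpdfproperty} and the identity $\ba{H}{fu}=f\ba{H}{u}+\ba{H}{f}\wedge u$ in hand. Your Leibniz step supplies this in effect, but the justification for ``reducing to generators'' must be phrased that way rather than as multilinearity of each side.
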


\begin{proof}
According to Eq.~\eqref{uvSchouten}, we have
\begin{align*}
\ba{H}{u}\cliffact\anyform 
=& (-1)^{i+1} \supperlb{H}{\supperlb{u}{\pd}}\anyform \\
=& (-1)^i(u\cliffact\pd(H\cliffact\anyform)
-H\cliffact u\cliffact(\pd\anyform))
+(H\cliffact(\pd(u\cliffact\anyform))
-\pd(u\cliffact H\cliffact\anyform)) \\
=& (-1)^i (u\cliffact\pd(H\cliffact\anyform)
-u\cliffact H\cliffact(\pd\anyform))
+(H\cliffact(\pd(u\cliffact\anyform))
-\pd(H\cliffact u\cliffact\anyform)) \\
=& -(-1)^{i} u\cliffact(\supperlb{\pd}{\ii{H}}\anyform) +\supperlb{\pd}{\ii{H}}(u\cliffact\anyform) .\qedhere
\end{align*}
\end{proof}

A straightforward (though lengthy) computation shows the following:
\begin{lem} \label{Lem:localstrAbdsH}
Suppose that $(z^1,\dots,z^n)$ is a local
holomorphic chart and $H=\pi+\theta+\omega$ is given by
\begin{equation} \label{Eqt:Hlocally}
H= \picoeff{i,j}\partialz{i}\wedge\partialz{j}
+\thetacoeff{p}{q}\partialz{p}\wedge\dzbar{q} 
+\omegacoeff{k,l}\dzbar{k}\wedge\dzbar{l}
,\end{equation}
where $\picoeff{i,j}$, $\thetacoeff{p}{q}$, and $\omegacoeff{k,l}$ are 
complex valued smooth functions on $\bX$. Then the $H$-twisted Lie
algebroid structure on $\dal_H\cong\Toi\oplus\Tios$ can be expressed by:
\begin{gather}
\anchorsH(\partialzbar{i})=\partialzbar{i}-\thetacoeff{p}{i}\partialz{p},
\qquad \anchorsH(\dz{i})=2\picoeff{i,q}\partialz{q}, \label{align:anchorsHlocal} \\
\basH{\partialzbar{i}}{\partialzbar{j}}=2\pd\omegacoeff{i,j},
\quad \basH{\dz{i}}{\dz{j}}=2\pd \picoeff{i,j},
\quad \basH{\dz{j}}{\partialzbar{i}}=\pd\thetacoeff{j}{i}. \label{align:basHlocal}
\end{gather}
\end{lem}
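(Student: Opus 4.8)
The plan is to extract the local formulas directly from the explicit descriptions $\anchorsH=\anchors+\anchor\rond\Hsharp$ and $\basH{\alpha}{\beta}=\bas{\alpha}{\beta}+[\alpha,\beta]_H$ of the $H$-twisted structure recalled in the subsection on Hamiltonian operators, where $[\alpha,\beta]_H=\LieDer_{\Hsharp(\alpha)}\beta-\LieDer_{\Hsharp(\beta)}\alpha-\dee\,\duality{\Hsharp(\alpha)}{\beta}$, with $\LieDer$ and $\dee=\pd$ referring to the Lie algebroid $\lad$. The only inputs from the undeformed bialgebroid $(\lad,\dal)$ are: the anchors are the projections onto $\Tio$ and $\Toi$, so $\anchor(\partialz{p})=\partialz{p}$, $\anchor(\dzbar{l})=0$, $\anchors(\partialzbar{i})=\partialzbar{i}$, $\anchors(\dz{j})=0$; the brackets of the frames $\{\partialz{p},\dzbar{l}\}$ and $\{\partialzbar{i},\dz{j}\}$ all vanish; and the pairings $\duality{\partialz{p}}{\dz{j}}=\delta^j_p$, $\duality{\dzbar{l}}{\partialzbar{i}}=\delta^l_i$ hold, all remaining frame pairings being zero. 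First I would evaluate $\Hsharp$ on the coframe by contracting the local expression \eqref{Eqt:Hlocally}, obtaining
\[ \Hsharp(\partialzbar{i})=-\thetacoeff{p}{i}\partialz{p}+2\omegacoeff{i,l}\dzbar{l}, \qquad \Hsharp(\dz{i})=2\picoeff{i,q}\partialz{q}+\thetacoeff{i}{q}\dzbar{q}, \]
the factors of $2$ stemming from the skew-symmetric double counting in the summation over all index pairs in the $\pi$- and $\omega$-terms, which is absent from the mixed $\theta$-term.

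The anchor equations \eqref{align:anchorsHlocal} are then immediate: since $\anchor$ retains the $\Tio$-component and kills the $\Tois$-component, $\anchor\rond\Hsharp(\partialzbar{i})=-\thetacoeff{p}{i}\partialz{p}$ and $\anchor\rond\Hsharp(\dz{i})=2\picoeff{i,q}\partialz{q}$, to which one adds $\anchors(\partialzbar{i})=\partialzbar{i}$ and $\anchors(\dz{i})=0$. For the brackets \eqref{align:basHlocal} the decisive simplification is twofold. On one hand $\bas{\cdot}{\cdot}$ vanishes on every pair of coframe sections, so $\basH{\alpha_a}{\alpha_b}=[\alpha_a,\alpha_b]_H$. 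On the other hand the $\lad$-differential $\pd$ annihilates each $\partialzbar{i}$ and $\dz{j}$, because their pairings with the $\lad$-frame are constant and the $\lad$-frame brackets vanish, so the Chevalley--Eilenberg formula gives $\pd\partialzbar{i}=\pd\dz{j}=0$. Hence, via Cartan's formula $\LieDer_X\beta=\ii{X}\pd\beta+\pd\ii{X}\beta$ for the $\lad$-action on $\dal$, the term $\ii{X}\pd\beta$ disappears and $\LieDer_{\Hsharp(\alpha_a)}\alpha_b=\pd\,\duality{\Hsharp(\alpha_a)}{\alpha_b}=\pd\,H(\alpha_a,\alpha_b)$. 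Combining this with the skew-symmetry $\duality{\Hsharp(\alpha_b)}{\alpha_a}=-H(\alpha_a,\alpha_b)$, the three summands of $[\alpha_a,\alpha_b]_H$ collapse to the single expression $\basH{\alpha_a}{\alpha_b}=\pd\,H(\alpha_a,\alpha_b)$. Reading off $H(\partialzbar{i},\partialzbar{j})=2\omegacoeff{i,j}$, $H(\dz{i},\dz{j})=2\picoeff{i,j}$ and $H(\dz{j},\partialzbar{i})=\thetacoeff{j}{i}$ from the same contractions then produces exactly \eqref{align:basHlocal}.

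The main obstacle is purely one of bookkeeping: fixing the pairing conventions between $\lad=\Tio\oplus\Tois$ and $\dal=\Toi\oplus\Tios$, and then tracking the signs and the factors of $2$ produced by the contractions of $H$ and by Cartan's formula. Once the observation that $\pd$ kills the coordinate coframe is in place, the ostensibly lengthy computation of the correction bracket reduces to applying $\pd$ to a single coefficient of $H$, and this is precisely what turns the three-term Koszul-type bracket into the compact formulas \eqref{align:basHlocal}.
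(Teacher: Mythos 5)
Your proof is correct and follows the route the paper itself indicates: the paper gives no written proof of this lemma, declaring it ``a straightforward (though lengthy) computation'' from the explicit formulas $\anchorsH=\anchors+\anchor\rond\Hsharp$ and $\basH{\alpha}{\beta}=\bas{\alpha}{\beta}+[\alpha,\beta]_H$, which is exactly the computation you carry out (correctly reading the differential in the Koszul-type term as $\dee=\pd$, the $\lad$-differential, as the target $\sections{\dal}$ forces). Your observation that $\pd$ annihilates the coordinate coframe, collapsing the three-term bracket to $\pd\,H(\alpha,\beta)$, is a clean way to organize the bookkeeping, and the resulting signs and factors of $2$ all match \eqref{align:anchorsHlocal}--\eqref{align:basHlocal}.
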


\begin{lem}
Making the same assumptions as in Lemma~\ref{Lem:localstrAbdsH}, consider the local section 
\begin{equation} \label{Eqt:s} s=\dz{1}\wedge\cdots\wedge\dz{n} \end{equation}
of $\module=\QAsHHalf$. The representation of $\dal_H$ on $\module$ is given by
\begin{equation}\label{Eqt:RepHs}
\representation_{\partialzbar{i}}s=-\tfrac{\partial\thetacoeff{p}{i}}{\partial z^p}s ,
\qquad \representation_{\dz{i}}s=2\tfrac{\partial\picoeff{i,p}}{\partial z^p}s
.\end{equation}
\end{lem}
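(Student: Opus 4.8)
The plan is to compute the Evens-Lu-Weinstein representation of $\dal_H$ on the full module $\QAsH=\wedge^{2n}\dal_H\otimes\wedge^{2n}\TsCX$ in the given local frame, read off the scalar by which each generator acts, and then descend to $\module=\QAsHHalf$ by halving. Concretely, take the local frame $\partialzbar{1},\dots,\partialzbar{n},\dz{1},\dots,\dz{n}$ of $\dal_H\cong\Toi\oplus\Tios$, and set $e=\partialzbar{1}\wedge\cdots\wedge\partialzbar{n}\wedge\dz{1}\wedge\cdots\wedge\dz{n}$ in $\sections{\wedge^{2n}\dal_H}$ and $\mu=\dz{1}\wedge\cdots\wedge\dz{n}\wedge\dzbar{1}\wedge\cdots\wedge\dzbar{n}$ in $\sections{\wedge^{2n}\TsCX}$, so that $e\otimes\mu$ is a local generator of $\QAsH$. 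Under the identification $\QAsH\cong\cotangent{n,0}\otimes\cotangent{n,0}=\module\otimes\module$, obtained by pairing the $\wedge^n\Toi$ factor of $e$ against the $\wedge^n\Tois$ factor of $\mu$ via the trivialization $\wedge^n\Toi\otimes\wedge^n\Tois\cong\CC$, this generator corresponds to $s\otimes s$ up to a nonzero constant, with $s=\dz{1}\wedge\cdots\wedge\dz{n}$. Since the representation on a line bundle is multiplication by a scalar, and squaring doubles that scalar, the eigenvalue of $\representation_\alpha$ on $\module$ is exactly half of its eigenvalue on $\QAsH$; thus it suffices to compute $\representation_\alpha(e\otimes\mu)$ for $\alpha=\partialzbar{i}$ and $\alpha=\dz{i}$.

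First I would treat the bracket part of the representation formula. The crucial observation is that, by Lemma~\ref{Lem:localstrAbdsH}, every bracket $\basH{\alpha}{\cdot}$ of frame elements lands in $\Tios$. Hence replacing one of the $n$ generators $\partialzbar{j}$ (which span $\Toi$) by such a bracket produces a wedge with $n-1$ factors in $\Toi$ and $n+1$ factors in the rank-$n$ bundle $\Tios$, and therefore vanishes identically. The only surviving bracket contributions come from replacing a $\dz{j}$ by a bracket again lying in $\Tios$, and only the component of that bracket along $\dz{j}$ survives the wedge, producing a scalar multiple of $e$. For $\alpha=\partialzbar{i}$ this isolates $\basH{\partialzbar{i}}{\dz{j}}=-\pd\thetacoeff{j}{i}$ and yields $-\tfrac{\partial\thetacoeff{p}{i}}{\partial z^p}\,e$; for $\alpha=\dz{i}$ it isolates $\basH{\dz{i}}{\dz{j}}=2\pd\picoeff{i,j}$ and yields $2\tfrac{\partial\picoeff{i,p}}{\partial z^p}\,e$.

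Next I would compute the anchor part $e\otimes\ld{\anchorsH(\alpha)}\mu$. Since $\mu$ has constant coefficients and is top-degree (so $d\mu=0$), a one-line Cartan-calculus verification shows that the Lie derivative along a field $\sum_p g^p\partialz{p}+\sum_p h^p\partialzbar{p}$ equals $\big(\sum_p\tfrac{\partial g^p}{\partial z^p}+\sum_p\tfrac{\partial h^p}{\partial\bar z^p}\big)\mu$. Feeding in $\anchorsH(\partialzbar{i})=\partialzbar{i}-\thetacoeff{p}{i}\partialz{p}$ produces $-\tfrac{\partial\thetacoeff{p}{i}}{\partial z^p}\mu$, while $\anchorsH(\dz{i})=2\picoeff{i,q}\partialz{q}$ produces $2\tfrac{\partial\picoeff{i,p}}{\partial z^p}\mu$. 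Adding the bracket and anchor contributions gives $\representation_{\partialzbar{i}}(e\otimes\mu)=-2\tfrac{\partial\thetacoeff{p}{i}}{\partial z^p}(e\otimes\mu)$ and $\representation_{\dz{i}}(e\otimes\mu)=4\tfrac{\partial\picoeff{i,p}}{\partial z^p}(e\otimes\mu)$. Halving to pass from $\QAsH$ to its square root $\module$ gives precisely Eq.~\eqref{Eqt:RepHs}.

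I expect the main obstacle to be bookkeeping rather than conceptual: keeping the vanishing of $\wedge^{n+1}\Tios$ straight, tracking the Koszul signs when a replaced generator is reordered to exhibit the result as a multiple of $e$ together with the signs in the Cartan-formula computation of the anchor term, and, most delicately, pinning down the factor-of-two relationship between the representations on $\QAsH$ and on $\module$. This last point requires checking that $e\otimes\mu$ really corresponds to $s\otimes s$ under the chosen trivialization $\wedge^n\Toi\otimes\wedge^n\Tois\cong\CC$ and that this identification intertwines the $\dal_H$-actions. Once that is fixed, the remaining steps are the routine local calculations indicated above.
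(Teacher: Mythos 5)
Your proposal is correct and follows essentially the same route as the paper: the paper likewise writes the generator $s^2=(\partialzbar{1}\wedge\cdots\wedge\partialzbar{n}\wedge\dz{1}\wedge\cdots\wedge\dz{n})\otimes(\dz{1}\wedge\cdots\wedge\dz{n}\wedge\dzbar{1}\wedge\cdots\wedge\dzbar{n})$ of $\QAsH$, computes $\representation_{\partialzbar{i}}s^2=-2\tfrac{\partial\thetacoeff{p}{i}}{\partial z^p}s^2$ and $\representation_{\dz{i}}s^2=4\tfrac{\partial\picoeff{i,p}}{\partial z^p}s^2$ from Eqs.~\eqref{align:basHlocal} and~\eqref{eqt:ldanchorsHonAbd}, and halves the scalars. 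Your bracket and divergence computations, and the vanishing argument via $\wedge^{n+1}\Tois=0$, all agree with the paper's.
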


\begin{proof}
Using Eq.~\eqref{align:anchorsHlocal}, we compute
\begin{equation}\label{eqt:ldanchorsHonAbd}
\begin{aligned}
\ld{\anchorsH(\partialzbar{i})}{\dz{j}} &= -d\thetacoeff{j}{i}, &
\ld{\anchorsH(\partialzbar{i})}{\dzbar{j}} &= 0, \\
\ld{\anchorsH(\dz{i})}{\dz{j}} &= 2 d\picoeff{i,j}, &
\ld{\anchorsH(\dz{i})}{\dzbar{j}} &= 0.
\end{aligned}
\end{equation}

Write
\[ s^2=(\partialzbar{1}\wedge\cdots\wedge\partialzbar{n}\wedge\dz{1}\wedge\cdots\wedge\dz{n})
\otimes(\dz{1}\wedge\cdots\wedge\dz{n}\wedge\dzbar{1}\wedge\cdots\wedge \dzbar{n}) .\]
Then, using Eqs.~\eqref{align:basHlocal} and~\eqref{eqt:ldanchorsHonAbd}, one obtains
\[ \representation_{\partialzbar{i}}s^2=-2\tfrac{\partial\thetacoeff{p}{i}}{\partial z^p} s^2, 
\qquad \representation_{\dz{i}}s^2=4\tfrac{\partial\picoeff{i,p}}{\partial z^p}s^2 .\]
The conclusion thus follows immediately.
\end{proof}

\begin{cor}
Locally, the operator $\DDH$ in Eq.~\eqref{Eqt:DDH} is given by
\begin{equation}\label{Eqt:DDHslocal}
\DDH s=(2\tfrac{\partial\picoeff{i,p}}{\partial z^p}\partialz{i} 
-\tfrac{\partial\thetacoeff{p}{i}}{\partial z^p}\dzbar{i})\otimes s
,\end{equation}
where $s$ is defined in Eq.~\eqref{Eqt:s}.
\end{cor}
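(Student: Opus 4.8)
The plan is to read off $\DDH s$ directly from its defining identity $\ii{\alpha}\DDH s=\representation_{\alpha}s$ (valid for all $\alpha\in\sections{\dal}$, see Eq.~\eqref{Eqt:DDH}), feeding in the representation formulas~\eqref{Eqt:RepHs} just established. Since $\module$ is a line bundle with local frame $s$, the section $\DDH s\in\sections{\lad\otimes\module}$ can be written uniquely as $\DDH s=X\otimes s$ for some $X\in\sections{\lad}$. Expanding $X$ in the local frame $\{\partialz{i},\dzbar{i}\}$ of $\lad=\Tio\oplus\Tois$,
\[ X=\sum_i a^i\,\partialz{i}+\sum_i b^i\,\dzbar{i}, \]
reduces the entire claim to pinning down the coefficient functions $a^i$ and $b^i$.

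First I would contract $X\otimes s$ against the dual local frame $\{\dz{j},\partialzbar{j}\}$ of $\dal=\Toi\oplus\Tios$. The only nonvanishing pairings are $\duality{\dz{j}}{\partialz{i}}=\delta_i^j$ and $\duality{\partialzbar{j}}{\dzbar{i}}=\delta^i_j$, so that contraction acts only on the $\lad$-factor and gives $\ii{\dz{j}}(X\otimes s)=a^j s$ and $\ii{\partialzbar{j}}(X\otimes s)=b^j s$. Invoking the defining identity of $\DDH$ together with Eq.~\eqref{Eqt:RepHs} then yields
\[ a^j s=\representation_{\dz{j}}s=2\tfrac{\partial\picoeff{j,p}}{\partial z^p}\,s, \qquad b^j s=\representation_{\partialzbar{j}}s=-\tfrac{\partial\thetacoeff{p}{j}}{\partial z^p}\,s. \]
Reading off the coefficients and substituting them back into the expansion of $X$ produces exactly Eq.~\eqref{Eqt:DDHslocal}.

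I do not expect any genuine analytic obstacle: this corollary is essentially a transcription of the representation formula~\eqref{Eqt:RepHs} into the language of the operator $\DDH$. The one point that demands care is the bookkeeping of the duality between $\lad$ and $\dal$ --- in particular, confirming which summands of $\Tio\oplus\Tois$ and $\Toi\oplus\Tios$ pair nontrivially, so that the contraction $\ii{\alpha}$ sends $X\otimes s$ to $\duality{\alpha}{X}\,s$ with the correct Kronecker deltas. Once these conventions are fixed, the computation is immediate and requires nothing beyond the preceding lemma.
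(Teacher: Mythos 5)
Your argument is correct and is exactly the (implicit) derivation the paper intends: the corollary is stated without proof as an immediate consequence of the preceding lemma, obtained by expanding $\DDH s=X\otimes s$ in the frame $\{\partialz{i},\dzbar{i}\}$ of $\lad$ and reading off the coefficients from $\ii{\alpha}\DDH s=\representation_{\alpha}s$ using Eq.~\eqref{Eqt:RepHs}. Your bookkeeping of the duality pairings ($\Tio$ pairing with $\Tios$ and $\Tois$ with $\Toi$) is the right point to check, and you have it correct.
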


We are now ready to prove Theorem~\ref{Thm:main1}.
\begin{proof}[Proof of Theorem~\ref{Thm:main1}]
We adopt an inductive approach. First we prove the commutativity 
of Diagram~\eqref{Diagram:stdIso} for $k=0$.

Note that for any $f\in\cinf{\bX,\CC}$, $u\in\sections{\wedge^k\lad}$ 
and $s\in\sections{\module}$, one has
\begin{align*}
\stdiso\bdeesH(f u\otimes s) &= \stdiso\big(f\bdeesH(u\otimes s)
+((\barpd f+\lb{H}{f})\wedge u)\otimes s\big) & \text{by Eq.~\eqref{Eqt:bdeesHuotimess}} \\
&= f\stdiso\bdeesH(u\otimes s)+(\barpd f+\lb{H}{f})\cliffact\stdiso(u\otimes s) . &
\end{align*}

On the other hand, if we write $\anyform=\stdiso(u\otimes s)$, one has
\begin{align*}
& (\barpd+\supperlb{\pd}{\ii{H}})\stdiso(fu\otimes s) & \\
=& (\barpd+\supperlb{\pd}{\ii{H}})(f\anyform) & \\
=& \barpd f\wedge\anyform+f\barpd\anyform
+\ba{H}{f}\cliffact\anyform +f\supperlb{\pd}{\ii{H}}\anyform
& \text{by Eq.~\eqref{Eqt:iHpdfproperty}} \\
=&f(\barpd+\supperlb{\pd}{\ii{H}})\stdiso(u\otimes s)
+(\barpd f+\ba{H}{f})\cliffact\stdiso(u\otimes s). &
\end{align*}

It thus follows that the map 
$\stdiso\circ\bdeesH-(\barpd+\supperlb{\pd}{\ii{H}})\circ\stdiso$
is $\cinf{\bX}$-linear.
Take a local holomorphic chart $(z^1,\dots,z^n)$ 
and write $H$ locally as in Eq.~\eqref{Eqt:Hlocally} 
in Lemma \ref{Lem:localstrAbdsH}. 
Again take $s$ as in Eq.~\eqref{Eqt:s}. 
For $k=0$, we have $\bdeesH s=\DDH s$, which is given locally 
by Eq.~\eqref{Eqt:DDHslocal}. Then, we compute
\begin{align*}
\stdiso(\bdeesH s) =& (2\tfrac{\partial\picoeff{i,p}}{\partial z^p}\partialz{i} -\tfrac{\partial\thetacoeff{p}{i}}{\partial z^p}\dzbar{i})\cliffact(\dz{1}\wedge\cdots\wedge\dz{n}) \\
=& 2\sum_{i=1}^n(-1)^{i+1}\tfrac{\partial\picoeff{i,p}}{\partial z^p} \dz{1}\wedge\cdots\wedge\widehat{\dz{i}}\wedge\cdots\wedge\dz{n}
-\tfrac{\partial\thetacoeff{p}{i}}{\partial z^p}\dzbar{i}\wedge\dz{1}\wedge\cdots\wedge\dz{n}
.\end{align*}
Thus we have
\begin{align*}
(\barpd+\supperlb{\pd}{\ii{H}})s =& \pd\ii{H}(\dz{1}\wedge\cdots\wedge\dz{n}) \\
=& \pd\Big(2\sum_{i<j}(-1)^{i+j-1}\picoeff{i,j}\dz{1}\wedge\cdots\wedge
\widehat{\dz{i}}\wedge\cdots\wedge\widehat{\dz{j}}\wedge\cdots\wedge\dz{n} \\
& \quad +\sum_{p=1}^n(-1)^{p+1}\thetacoeff{p}{i}\dzbar{i}
\wedge\dz{1}\wedge\cdots\wedge\widehat{\dz{p}}\wedge\cdots\wedge\dz{n} \\
& \quad -\omegacoeff{k,l}\dzbar{k}\wedge\dzbar{l}\wedge
\dz{1}\wedge\cdots\cdots\wedge\dz{n}\Big) \\
=& \stdiso(\bdeesH s)
.\end{align*}
It thus follows that Diagram~\eqref{Diagram:stdIso} indeed commutes when $k=0$.

Now assume that we have proved the commutativity of Diagram~\eqref{Diagram:stdIso}
when $k\leq m$ (where $0\leq m\leq 2n-1$). 
To prove the $k=m+1$ case, we consider a section 
$(u\wedge w)\otimes s\in\sections{\wedge^{m+1}\lad\otimes\module}$, 
where $u\in\sections{\lad}$, $w\in\sections{\wedge^m\lad}$ 
and $s\in\sections{\module}$. Then
\begin{align*}
& (\barpd+\supperlb{\pd}{\ii{H}})\stdiso((u\wedge w)\otimes s) & \\
=& (\barpd+\supperlb{\pd}{\ii{H}})(u\cliffact\anyform)
& \text{where }\anyform=w\cliffact s \\
=& \barpd u\cliffact\anyform-u\cliffact\barpd\anyform
+\ba{H}{u}\cliffact\anyform-u\cliffact(\supperlb{\pd}{\ii{H}}\anyform)
& \text{by Eqs.~\eqref{Eqt:barpdulambda} and~\eqref{Eqt:iHpdulambdaproperty}} \\
=& \barpdH u\cliffact\anyform-u\cliffact(\barpd+\supperlb{\pd}{\ii{H}})\anyform \\
=& \stdiso\big((\barpdH u\wedge w)\otimes s\big)-u\cliffact\stdiso\bdeesH(w\otimes s)
& \text{by assumption} \\
=& \stdiso\bdeesH((u\wedge w)\otimes s) . &
\end{align*}
This concludes the proof.
\end{proof}

\subsection{Modular classes}

The modular class of a Lie algebroid was introduced by Evens-Lu-Weinstein~\cite{MR1726784}. 
The following version for complex Lie algebroids appeared
 in the arXiv version \texttt{dgga/9610008}  of~\cite{MR1726784}
but not in the published paper.
 It is also implied in~\cite{math/0703298}. 
The presentation which we give below was communicated to us by Camille Laurent-Gengoux~\cite{LaurentLetter}.

Let $B$ be a complex Lie algebroid over a real manifold $M$,
with $\rk_{\CC}B=r$ and $\dim M=m$. 
Its Evens-Lu-Weinstein module is
$\QB=\wedge^{r}B\otimes\wedge^{m}T^*_\CC M$.

Consider the complex of sheaves
\begin{equation}
\label{Rome} \sxsheaf{0}
\stackrel{\tilde{d}_B}{\longrightarrow} \ssheaf{1}
\stackrel{{d}_B}{\longrightarrow} \ssheaf{2} \cdots
\stackrel{{d}_B}{\longrightarrow} \ssheaf{r},
\end{equation}
where $\sxsheaf{0}$ is the sheaf of nowhere vanishing smooth complex valued functions on $M$; $\ssheaf{\bullet}$ is the sheaf of sections of $\wedge^\bullet B^*$; 
$\dL$ is the usual Lie algebroid cohomology differential; and 
$\dLNonzero f=\dL\log f=\tfrac{\dL f}{f}$, for all $f\in\cinf{U,\CC^{\times}}$, where $U$ is an arbitrary open subset of $M$. 
We denote its hypercohomology by $\tildecohomology{\bullet}(B,\CC)$.
Note that in Eq.~\eqref{Rome}, if we replace $\sxsheaf{0}$ by
$\ssheaf{0}$, the sheaf of smooth complex valued functions on $M$,
and $\tilde{d}_B$ by the usual Lie algebroid differential $d_B$, the
hypercohomology of the resulting complex of sheaves 
\begin{equation}
\label{Athens} \ssheaf{0}
\stackrel{d_B}{\longrightarrow} \ssheaf{1}
\stackrel{d_B}{\longrightarrow} \ssheaf{2} \cdots
\stackrel{d_B}{\longrightarrow} \ssheaf{r},
\end{equation}
is isomorphic to
the usual Lie algebroid cohomology $\cohomology{\bullet}(B,\CC)$
of the complex Lie algebroid $B$ with trivial coefficients $\CC$ since
each $\ssheaf{\bullet}$ is a soft sheaf. 
The exponential sequence 
\[ 0\to\ZZ\to\ssheaf{}\to\sxsheaf{}\to 0 ,\] 
where $\ssheaf{}$ (resp.\ $\sxsheaf{}$) stands for the the complex of sheaves 
\eqref{Athens} (resp.\ \eqref{Rome}) and 
the locally constant sheaf $\ZZ$ is regarded as a complex of sheaves concentrated in degree $0$, 
induces the long exact sequence
\[ \cdots\to\cohomology{i}(M,\ZZ)\to\cohomology{i}(B,\CC) 
\to\tildecohomology{i}(B,\CC)\to\cohomology{i+1}(M,\ZZ)\to\cdots \]

Note that $\tildecohomology{\bullet}(B,\CC)$ can be computed as
the total cohomology of the \v{C}ech double complex
\begin{equation} \label{Diagram:CechLsNonzero} 
\xymatrix{ 
\cdots & \cdots & \cdots & \\ 
\sxchech{2}{0} \ar[r]^{\dLNonzero} \ar[u]^{\cechdelta} &
\schech{2}{1} \ar[r]^{\dL} \ar[u]^{\cechdelta} &
\schech{2}{2} \ar[r]^{\dL} \ar[u]^{\cechdelta} & \cdots \\ 
\sxchech{1}{0} \ar[r]^{\dLNonzero} \ar[u]^{\cechdelta} &
\schech{1}{1} \ar[r]^{\dL} \ar[u]^{\cechdelta} &
\schech{1}{2} \ar[r]^{\dL} \ar[u]^{\cechdelta} & \cdots \\ 
\sxchech{0}{0} \ar[r]^{\dLNonzero} \ar[u]^{\cechdelta} &
\schech{0}{1} \ar[r]^{\dL} \ar[u]^{\cechdelta} &
\schech{0}{2} \ar[r]^{\dL} \ar[u]^{\cechdelta} & \cdots 
} \end{equation}
where $\cover=\set{U_{i}}_{i\in I}$ is a good open cover of $M$ and
$\cechdelta$ is the usual \v{C}ech coboundary operator.

Let $(U_i)_{i\in I}$ be a good open cover of $M$, 
and $\omega_i$ a nowhere vanishing section of $\QB$ over $U_i$. 
For all $i,j\in I$, there exists a unique nowhere vanishing function
$f_{ij}\in\cinf{U_{ij},\CC^\times}$ such that $\omega_i=f_{ij}\omega_j$. 
It is clear from the construction that
\[ f_{ij} f_{jk} f_{ki}=1 .\]
Let $\xi_i\in\sections{B^*|_{U_i}}$ be the modular $1$-form on ${U_i}$ corresponding to $\omega_i$. 
That is, we have $\nabla_X\omega_i=\duality{\xi_i}{X}\omega_i$ for all $X\in\sections{B|_{U_i}}$, 
where $\nabla$ denotes the canonical representation of $B$ on $Q_B$ of~\cite{MR1726784}. 
It thus follows that \[ \xi_i=\xi_j+\tfrac{d_B f_{ij}}{f_{ij}}=\xi_j+\tilde{d}_B f_{ij} .\]
As a consequence, $(\xi_i,f_{ij})$ is a 1-cocycle 
of the double complex~\eqref{Diagram:CechLsNonzero}, 
and therefore defines a class in $\tildecohomology{1}(B,\CC)$.

\begin{defn}
The class in $\tildecohomology{1}(B,\CC)$ defined by $[(\xi_i,f_{ij})]$ is called the \emph{modular class} of the complex Lie algebroid $B$, and denoted $\modular(B)$.
\end{defn}

\begin{lem}\label{Tokyo}
Consider the long exact sequence
\[ \cdots \to \cohomology{1}(B,\CC) \to \tildecohomology{1}(B,\CC)
\xto{\tau} \cohomology{2}(M,\ZZ) \to \cdots \] 
The image of the modular class $\modular(B)$ under $\tau$ is the first Chern class $c_1(\QB)$ of $\QB$. When $c_1(\QB)=0$, the modular class $\modular(B)$ is the image of a class in
$\cohomology{1}(B,\CC)$, which is defined exactly in the same way
using a global nowhere vanishing section, as the usual modular class
in~\cite{MR1726784}.
\end{lem}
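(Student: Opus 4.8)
The plan is to compute the connecting homomorphism $\tau$ directly on the \v{C}ech double complex~\eqref{Diagram:CechLsNonzero} and to recognize its output as the standard exponential-sequence representative of $c_1(\QB)$. Recall what the construction already gives us: relative to the local frames $\omega_i$, the functions $f_{ij}\in\cinf{U_{ij},\CC^\times}$ determined by $\omega_i=f_{ij}\omega_j$ are precisely the transition functions of the line bundle $\QB$, and the pair $(\xi_i,f_{ij})$ is a total-degree-$1$ cocycle of~\eqref{Diagram:CechLsNonzero}; in particular $\dL\xi_i=0$ and $\xi_i-\xi_j=\dLNonzero f_{ij}$. The long exact sequence comes from $0\to\ZZ\to\ssheaf{}\to\sxsheaf{}\to 0$, which in sheaf-degree $0$ is the exponential sequence $0\to\ZZ\to\ssheaf{0}\to\sxsheaf{0}\to 0$ and is the identity in every sheaf-degree $\geq 1$.

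To evaluate $\tau(\modular(B))$ I would lift the cocycle $(\xi_i,f_{ij})$ componentwise to the double complex of $\ssheaf{}$. The piece $\xi_i\in\schech{0}{1}$ lifts to itself, since the two complexes agree in sheaf-degree $\geq 1$; over a good cover each $f_{ij}$ admits a branch of its logarithm, so $f_{ij}$ lifts to $\hat f_{ij}=\log f_{ij}\in\schech{1}{0}$. Applying the total differential of the $\ssheaf{}$ complex, the component in sheaf-degree $2$ vanishes because $\dL\xi_i=0$, and the component in sheaf-degree $1$ vanishes because $\xi_i-\xi_j=\dLNonzero f_{ij}=\dL\hat f_{ij}$. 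Hence the only surviving term is the \v{C}ech coboundary in the sheaf-degree-$0$ corner,
\[ c_{ijk}=\cechdelta\hat f_{ij}=\log f_{jk}-\log f_{ik}+\log f_{ij}\in 2\pi\SqrtMinusOne\ZZ , \]
where $f_{ij}f_{jk}f_{ki}=1$ was used. Under the standard identification $2\pi\SqrtMinusOne\ZZ\cong\ZZ$, the class $[c_{ijk}]\in\cohomology{2}(M,\ZZ)$ is exactly the image of $(f_{ij})$ under the connecting map of the degree-$0$ exponential sequence, i.e. $c_1(\QB)$. This proves the first assertion. The delicate point is bookkeeping: one must check that the integral obstruction can be reached only by $\cechdelta\hat f$ in sheaf-degree $0$, and that the middle relation $\xi_i-\xi_j=\dLNonzero f_{ij}$ is precisely what forces the sheaf-degree-$1$ contribution of the differential to cancel, so that $\xi_i$ adds nothing to the integral class.

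For the second assertion, when $c_1(\QB)=0$ exactness of the long exact sequence at $\tildecohomology{1}(B,\CC)$ shows that $\modular(B)$ lies in the image of $\cohomology{1}(B,\CC)\to\tildecohomology{1}(B,\CC)$, so it is the image of some class; it remains to identify that class with the usual modular class. Here I would use that a smooth complex line bundle with vanishing first Chern class is smoothly trivial: since $\ssheaf{0}$ is soft and hence acyclic, the degree-$0$ exponential sequence yields $\cohomology{1}(M,\sxsheaf{0})\cong\cohomology{2}(M,\ZZ)$, so $c_1(\QB)=0$ forces $\QB$ to admit a global nowhere-vanishing section $\omega$. Taking $\omega_i=\omega|_{U_i}$ for all $i$ makes every $f_{ij}=1$ and every $\xi_i$ equal to the single global modular $1$-form $\xi\in\sections{B^*}$ defined by $\nabla_X\omega=\duality{\xi}{X}\omega$, which satisfies $\dL\xi=0$ and therefore represents a class $[\xi]\in\cohomology{1}(B,\CC)$ — the modular class of Evens--Lu--Weinstein. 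The modular cocycle then reads $(\xi,1)$, manifestly the image of $[\xi]$, which establishes the claim. Everything in this paragraph is standard; the only genuine obstacle remains the explicit identification of $\tau$ on the double complex carried out above.
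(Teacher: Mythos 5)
Your proposal is correct and follows exactly the route the paper intends: the paper states Lemma~\ref{Tokyo} without a written proof, relying on the preceding construction of the cocycle $(\xi_i,f_{ij})$ and the long exact sequence of the exponential sequence of complexes of sheaves, and your explicit computation of the connecting map via the lift $(\xi_i,\log f_{ij})$ is the standard argument being left implicit. Your treatment of the second assertion (smooth triviality of $\QB$ from $c_1(\QB)=0$, then taking $\omega_i=\omega|_{U_i}$ so the cocycle becomes $(\xi,1)$) likewise matches what the paper takes for granted.
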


A complex Lie algebroid $B$ is said to be \textbf{unimodular} if its modular class vanishes.
The following result follows immediately from Lemma~\ref{Tokyo}.

\begin{cor}
A complex Lie algebroid $B$ is unimodular if and only if $c_1(\QB)=0$ 
and for any fixed nowhere vanishing section $\omega\in\sections{\QB}$, 
the modular section $\xi\in\sections{B^*}$ defined by,
\[ \nabla_X\omega=\duality{\xi}{X}\omega \qquad(\forall X\in\sections{B}) \]
is a coboundary, i.e. $\xi=\dL f$ for some $f\in\cinf{M,\CC}$.
\end{cor}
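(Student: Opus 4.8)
The plan is to deduce the corollary directly from the long exact sequence of Lemma~\ref{Tokyo} together with an inspection of the \Cech double complex~\eqref{Diagram:CechLsNonzero}. By definition $B$ is unimodular exactly when $\modular(B)=0$ in $\tildecohomology{1}(B,\CC)$, so everything reduces to translating this single vanishing into the two stated conditions. First I would establish the necessity of $c_1(\QB)=0$: applying the connecting map $\tau$ of Lemma~\ref{Tokyo} gives $\tau(\modular(B))=c_1(\QB)$, whence $\modular(B)=0$ forces $c_1(\QB)=0$. In the other direction, $c_1(\QB)=0$ is precisely the condition guaranteeing that $\QB$ is topologically trivial, hence admits a global nowhere vanishing section $\omega$. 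For such an $\omega$ all transition functions $f_{ij}$ may be taken equal to $1$ and the modular $1$-forms $\xi_i$ glue to a global $\xi\in\sections{B^*}$, so the cocycle representing $\modular(B)$ takes the normalized form $(\xi,1)$ and, as in the second part of Lemma~\ref{Tokyo}, equals the image $j([\xi])$ of the usual modular class under $j\colon\cohomology{1}(B,\CC)\to\tildecohomology{1}(B,\CC)$.

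It then remains to characterize the vanishing of $[(\xi,1)]$ in the double complex. Sufficiency is immediate: if $\xi=\dL f$ with $f\in\cinf{M,\CC}$, put $g=e^{f}\in\cinf{M,\CC^{\times}}$, so that $\dLNonzero g=\tfrac{\dL g}{g}=\dL f=\xi$ and $\cechdelta g=1$; then $(\xi,1)$ is the total coboundary of the $0$-cochain $g\in\sxchech{0}{0}$, and therefore $\modular(B)=0$. For necessity I would run this backwards: writing $[(\xi,1)]=0$ means $(\xi,1)=D(g_i)$ for some $0$-cochain $(g_i)\in\sxchech{0}{0}$ with $g_i\in\cinf{U_i,\CC^{\times}}$, and matching the two components of the total differential forces simultaneously $g_i=g_j$ on overlaps---so the $g_i$ glue to a global $g\in\cinf{M,\CC^{\times}}$---and $\xi=\tfrac{\dL g}{g}$.

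The hard part will be the final identification of the coboundary condition $\xi=\tfrac{\dL g}{g}$ with the stated form $\xi=\dL f$. Since $g$ is only $\CC^{\times}$-valued, a global complex logarithm $f=\log g$ need not exist; the obstruction is the class of $g$ in $\cohomology{1}(M,\ZZ)$, which is exactly the group sitting in $\ker j$ in the long exact sequence of Lemma~\ref{Tokyo}. To close the gap I would exploit the freedom in the choice of trivialization: replacing $\omega$ by $g^{-1}\omega$ transforms the modular $1$-form by $\xi\mapsto\xi-\tfrac{\dL g}{g}$, which is $0=\dL 0$, so that for a suitable global nowhere vanishing section of $\QB$ the modular $1$-form is an honest $\dL$-coboundary. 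This is the step where one must be careful with the quantifier on $\omega$ and where the exponential sequence $0\to\ZZ\to\ssheaf{}\to\sxsheaf{}\to 0$ enters, and it is precisely the phenomenon measured by the term $\cohomology{1}(M,\ZZ)$ in Lemma~\ref{Tokyo}.
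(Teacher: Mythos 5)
Your overall route is the one the paper intends: the paper offers nothing beyond ``follows immediately from Lemma~\ref{Tokyo}'', and unwinding the vanishing of $\modular(B)$ through the connecting map $\tau$ and the double complex~\eqref{Diagram:CechLsNonzero} is exactly that argument. Your first two paragraphs are correct: $\tau(\modular(B))=c_1(\QB)$ gives the necessity of $c_1(\QB)=0$; a global nowhere vanishing section normalizes the representing cocycle to $(\xi,1)$; and $[(\xi,1)]=0$ in the double complex is equivalent to the existence of a global $g\in\cinf{M,\CC^{\times}}$ with $\xi=\dLNonzero g=\dL g/g$. The sufficiency direction via $g=e^{f}$ is also fine.

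The gap is in your last paragraph: you have correctly located the delicate point but your fix does not close it. Replacing $\omega$ by $g^{-1}\omega$ shows that \emph{some} nowhere vanishing section of $\QB$ has modular form $0=\dL 0$; it says nothing about the modular form of the \emph{given}, arbitrary $\omega$, which is what the statement with ``for any fixed'' requires. And no choice of argument can supply this in general: the universal version fails whenever $\cohomology{1}(M,\ZZ)$ maps nontrivially into $\cohomology{1}(B,\CC)$. Concretely, take $B=T_{\CC}S^{1}$ with the identity anchor; then $\QB$ is canonically trivial and $B$ is unimodular (the section $\tfrac{d}{d\theta}\otimes d\theta$ is flat), yet for $\omega=e^{\SqrtMinusOne\theta}\,\tfrac{d}{d\theta}\otimes d\theta$ one finds $\xi=\SqrtMinusOne\,d\theta$, which is not $\dL f$ for any $f\in\cinf{S^{1},\CC}$ since $\int_{S^{1}}\SqrtMinusOne\,d\theta=2\pi\SqrtMinusOne\neq 0$. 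What your argument actually establishes --- and what is true --- is the statement with ``for some'' in place of ``for any'', or equivalently with the coboundary condition read as $\xi=\dLNonzero g$ for some $g\in\cinf{M,\CC^{\times}}$ (a condition that, unlike $\xi=\dL f$, is genuinely independent of the choice of $\omega$). You should either state and prove that version, or add the hypothesis that the composite $\cohomology{1}(M,\ZZ)\to\cohomology{1}(B,\CC)$ vanishes, under which every $\dL g/g$ is an honest $\dL$-coboundary and the universal quantifier is restored.
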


As a consequence, a complex Lie algebroid $B$ is unimodular if and
only if $\QB$ is isomorphic to the trivial module $\CC$.

\begin{prop}
When $B=T^{0,1}\bX\bowtie\lad^{1,0}$ is the derived complex Lie
algebroid~\cites{MR2439547,arXiv:0903.5065} of a holomorphic Lie algebroid
$\lad$ over $\bX$, $B$ is a unimodular complex Lie algebroid 
if and only if $\lad$ is 
a unimodular holomorphic Lie algebroid, i.e. $Q_\lad$ is trivial as a holomorphic
line bundle and there exists a holomorphic global section $\omega$
of $Q_\lad$ such that $\nabla_X\omega=0$ for all $X\in\lad$.
\end{prop}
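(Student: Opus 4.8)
The plan is to reduce everything to the criterion established just above, namely that a complex Lie algebroid is unimodular if and only if its Evens-Lu-Weinstein module is isomorphic to the trivial module $\CC$, together with the analogous holomorphic statement: the holomorphic Lie algebroid $\lad$, of complex rank $k$ over $\bX$ with $n=\dim_\CC\bX$, is unimodular if and only if its holomorphic Evens-Lu-Weinstein line bundle $Q_\lad=\wedge^k\lad\otimes\wedge^n\Omega^1_\bX$ admits a nowhere-vanishing holomorphic $\lad$-flat global section. The whole statement then comes down to comparing $Q_B$ with $Q_\lad$, together with their representations.

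First I would compute $Q_B$ as a smooth complex line bundle. Since $B=\Toi\bowtie\lad^{1,0}$ has complex rank $r=n+k$, the top exterior power splits as
\[ \wedge^r B\cong\wedge^n\Toi\otimes\wedge^k\lad^{1,0}, \]
while $\wedge^{2n}\TsCX\cong\wedge^n\Tios\otimes\wedge^n\Tois$. Hence
\[ Q_B=\wedge^r B\otimes\wedge^{2n}\TsCX\cong\bigl(\wedge^n\Toi\otimes\wedge^n\Tois\bigr)\otimes\wedge^k\lad^{1,0}\otimes\wedge^n\Tios. \]
The factor $\wedge^n\Toi\otimes\wedge^n\Tois$ is canonically trivial because $\Tois=(\Toi)^*$, so $Q_B\cong\wedge^k\lad^{1,0}\otimes\wedge^n\Tios$, which is precisely the smooth complex line bundle underlying $Q_\lad$. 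Thus $Q_B$ and $Q_\lad$ agree as smooth complex line bundles.

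The heart of the argument is to match up the representations under this identification. Writing a section of $B$ as $\bar Y+v$ with $\bar Y\in\sections{\Toi}$ and $v\in\sections{\lad^{1,0}}$, I would show that the canonical $B$-representation $\nabla$ on $Q_B$ splits as follows: $\nabla_{\bar Y}$ is the Dolbeault operator along $\Toi$ defining the holomorphic structure of $Q_\lad$ (so that $\nabla_{\bar Y}s=0$ for all $\bar Y$ exactly when $s$ is holomorphic), whereas $\nabla_v$ is the $\CC$-linear extension of the holomorphic Evens-Lu-Weinstein representation of $\lad$ on $Q_\lad$. Both facts are local, and I would verify them exactly as in Lemma~\ref{Lem:localstrAbdsH}: choose local holomorphic coordinates and a local holomorphic frame of $\lad$, read off the matched-pair anchor and bracket of $B$ from the derived Lie algebroid structure of~\cites{MR2439547,arXiv:0903.5065}, and substitute these into the defining formula for the canonical representation. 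The cancellation of the trivial factor $\wedge^n\Toi\otimes\wedge^n\Tois$ guarantees that all antiholomorphic contributions disappear, leaving exactly the holomorphic data of $\lad$.

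Granting this identification, the conclusion is immediate. A nowhere-vanishing global section $s$ of $Q_B$ is $B$-flat if and only if $\nabla_{\bar Y}s=0$ and $\nabla_v s=0$ for all $\bar Y\in\sections{\Toi}$ and $v\in\sections{\lad^{1,0}}$, i.e.\ if and only if $s$ is a holomorphic section of $Q_\lad$ that is moreover $\lad$-flat. Therefore $Q_B\cong\CC$ as a $B$-module precisely when $Q_\lad$ admits a nowhere-vanishing holomorphic $\lad$-flat section, that is, precisely when $\lad$ is a unimodular holomorphic Lie algebroid; and by the criterion above the former is equivalent to $B$ being unimodular. The hard part will be the representation-matching step of the previous paragraph, since it requires unwinding the matched-pair structure of the derived Lie algebroid and checking that the canonical representation restricts to the Dolbeault operator on the $\Toi$-factor; once that is in place, both the computation of $Q_B$ and the final logical equivalence are routine.
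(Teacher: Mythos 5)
The paper states this proposition without proof, so there is nothing to compare your argument against line by line; judged on its own, your proposal is sound and is almost certainly the argument the authors had in mind. Your reduction to the criterion ``$B$ is unimodular iff $\QB\cong\CC$ as a $B$-module'' is exactly the corollary proved immediately before the proposition, and the bundle identification $\QB\cong\wedge^k\lad^{1,0}\otimes\wedge^n\Tios$ via cancellation of the canonically trivial factor $\wedge^n\Toi\otimes\wedge^n\Tois$ is correct (it is consistent with the paper's earlier computation $\QAsH\cong\wedge^n\Tios\otimes\wedge^n\Tios$ in the case $\lad=(\TsX)_\pi$). The equivalence then follows once the representation is matched, exactly as you say.

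Two points deserve care. First, the representation-matching step is the entire mathematical content, and you have only asserted it; in particular you should actually check that the canonical section of the factor $\wedge^n\Toi\otimes\wedge^n\Tois$ is flat for the induced representation --- this is where the $\Toi$-contributions coming from the bracket terms in $\representation_{\bar Y}$ must cancel against the $\wedge^n\Tois$-contributions coming from $\ld{\anchor(\bar Y)}$, and it is the analogue of the cancellations visible in the local computation of Lemma~\ref{Lem:localstrAbdsH} and Eq.~\eqref{Eqt:RepHs}. Only after that cancellation is it legitimate to say that $\nabla_{\bar Y}$ reduces to the $\barpd$-operator of $Q_\lad$ and $\nabla_v$ to the holomorphic Evens-Lu-Weinstein representation; this identification is essentially Proposition-level content of~\cites{MR2439547,arXiv:0903.5065}, so citing it is acceptable, but it should be made explicit rather than waved at. Second, the statement's phrase ``there exists a holomorphic global section $\omega$ with $\nabla_X\omega=0$'' must be read as a nowhere-vanishing (trivializing) section, as you implicitly do; otherwise the zero section would satisfy it vacuously. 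With those caveats your proof is complete.
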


\begin{defn}
An extended Poisson manifold $(\bX,H)$ is unimodular if its
corresponding complex Lie algebroid $\dal_H$ is unimodular.
\end{defn}

According to Theorem~\ref{Thm:main1}, we have

\begin{prop}
An extended Poisson manifold $(\bX,H)$ is unimodular 
if and only if there
exists a nowhere vanishing $(n,0)$-form $\omega\in\forms{n,0}$
such that \[ \barpd\omega+\supperlb{\pd}{\ii{H}}\omega=\barpd\omega+\pd\ii{H}\omega=0 .\]
\end{prop}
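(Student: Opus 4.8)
The plan is to trade unimodularity of $\dal_H$ for the existence of a flat, nowhere vanishing section of a line-bundle module, and then to convert flatness into the displayed equation by means of Theorem~\ref{Thm:main1}. By definition $(\bX,H)$ is unimodular exactly when $\dal_H$ is, and by the corollary preceding the statement this holds exactly when the Evens--Lu--Weinstein module $\QAsH$ is isomorphic to the trivial module $\CC$, equivalently when $\QAsH$ carries a global nowhere vanishing section that is flat for the canonical representation $\representation$. Since $\QAsH\cong\module\otimes\module$ with $\module=\QAsHHalf\cong\cotangent{n,0}$, I would work throughout with the square-root module $\module$, whose global sections are precisely the $(n,0)$-forms $\forms{n,0}$.

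The engine is the $k=0$ instance of Theorem~\ref{Thm:main1}. For $\omega\in\sections{\module}=\forms{n,0}$ one has $\stdiso(1\otimes\omega)=\omega$ and, by Eq.~\eqref{Eqt:bdeesHuotimess}, $\bdeesH\omega=\DDH\omega$; the theorem then gives $\stdiso(\DDH\omega)=(\barpd+\supperlb{\pd}{\ii{H}})\omega$. Consequently
\[ \DDH\omega=0 \quad\Longleftrightarrow\quad \barpd\omega+\supperlb{\pd}{\ii{H}}\omega=0, \]
and since $\pd\omega=0$ in top holomorphic degree we have $\supperlb{\pd}{\ii{H}}\omega=\pd\ii{H}\omega$, so the right-hand condition is exactly the equation in the statement. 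Because $\ii{\alpha}\DDH\omega=\representation_{\alpha}\omega$ for every $\alpha\in\sections{\dal}$, the vanishing $\DDH\omega=0$ says precisely that $\omega$ is a flat section of $\module$. Thus the displayed equation for a nowhere vanishing $\omega$ is equivalent to the triviality of $\module$ as a $\dal_H$-module.

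It remains to relate flat sections of $\module$ to flat sections of $\QAsH$. The implication $(\Leftarrow)$ is immediate: a nowhere vanishing $\omega$ solving the equation is a flat nowhere vanishing section of $\module$ by the previous paragraph, so $\omega\otimes\omega$ is a flat nowhere vanishing section of $\QAsH\cong\module\otimes\module$, whence $\QAsH\cong\CC$ and $(\bX,H)$ is unimodular. For $(\Rightarrow)$ one is handed a flat nowhere vanishing $\Omega\in\sections{\QAsH}$ and must produce a global flat nowhere vanishing square root $\omega\in\sections{\module}$. This is the step I expect to be the main obstacle. Over a contractible chart $U$ one may write $\Omega=\omega_U\otimes\omega_U$ with $\omega_U$ nowhere vanishing; writing $\representation_{\alpha}\omega_U=c(\alpha)\,\omega_U$ for a local $\dal^*$-valued coefficient $c$, the Leibniz rule gives $\representation_{\alpha}(\omega_U\otimes\omega_U)=2\,c(\alpha)\,\omega_U\otimes\omega_U$, so flatness of $\Omega$ forces $c=0$ and hence makes each local root $\omega_U$ flat. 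The genuine difficulty is purely global: two flat roots of the same $\Omega$ over a connected overlap differ by a locally constant sign, so the roots patch to a global $\omega$ only when the resulting class in $\cohomology{1}(\bX,\ZZ_2)$ vanishes. I would discharge this by exploiting the canonical identification $\module\cong\cotangent{n,0}$ to fix the roots consistently, after which $\omega$ solves the required equation by the second paragraph; verifying that this obstruction does not intervene is the delicate point that the argument must address.
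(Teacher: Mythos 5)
Your reduction of the displayed equation to flatness of $\omega$ is exactly the paper's route: the proposition is stated there as an immediate consequence of Theorem~\ref{Thm:main1}, and your second paragraph --- for $\omega\in\sections{\module}=\forms{n,0}$ one has $\bdeesH\omega=\DDH\omega$ by Eq.~\eqref{Eqt:bdeesHuotimess}, the theorem converts $\DDH\omega=0$ into $\barpd\omega+\supperlb{\pd}{\ii{H}}\omega=0$, and $\supperlb{\pd}{\ii{H}}\omega=\pd\ii{H}\omega$ because $\pd\omega=0$ in top holomorphic degree --- is the entire content of that citation. Your $(\Leftarrow)$ direction is also correct.

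The gap you flag in $(\Rightarrow)$ is genuine, and it cannot be discharged in the way you propose. The obstruction is not a matter of normalizing signs through the identification $\module\cong\cotangent{n,0}$: a nowhere vanishing flat section of $\QAsH\cong\cotangent{n,0}\otimes\cotangent{n,0}$ need not be a square, because $\cotangent{n,0}$ need not be trivial as a complex line bundle even when its tensor square is; the obstruction is precisely your class in $\cohomology{1}(\bX,\ZZ_2)$, equivalently the $2$-torsion part of $c_1(\cotangent{n,0})$, and it does not vanish in general. Concretely, take $H=0$ on an Enriques surface $\bX$ ($n=2$). In a holomorphic chart the local frame $s^2$ of $\QAs$ is flat by Eq.~\eqref{Eqt:RepHs}, and since $\anchors(\dzbar{i})^{*}$\,--- more precisely, since $\anchors(\partialzbar{i})=\partialzbar{i}$ and $\anchors(\dz{i})=0$ when $\pi=\theta=\omega=0$ --- a section $fs^2$ is flat if and only if $f$ is holomorphic; thus flat sections of $\QAs$ are exactly holomorphic sections of $\cotangent{2,0}\otimes\cotangent{2,0}$. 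For an Enriques surface this square of the canonical bundle is holomorphically trivial, so $\QAs$ admits a nowhere vanishing flat section and $\dal$ is unimodular by the corollary preceding the proposition; yet $c_1(\cotangent{2,0})$ is a nonzero $2$-torsion class, so there is no nowhere vanishing $(2,0)$-form on $\bX$ at all. Hence the forward implication fails with unimodularity defined, as in the paper, through triviality of $\QAsH$; the statement (and your proof) become correct only if one reads unimodularity as triviality of the square-root module $\module=\QAsHHalf$ --- which is what the ensuing corollary on Koszul--Brylinski homology actually uses --- or adds the hypothesis that $\cotangent{n,0}$ is topologically trivial. You were right to isolate this as the delicate point; the paper's one-line proof silently passes over it.
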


\begin{rmk}
It is clear that, when $H=0$, $(\bX,H)$ is unimodular means
that $\bX$ is Calabi-Yau. Thus one can consider a unimodular
extended Poisson manifold $(\bX,H)$ as a generalized Calabi-Yau manifold.
\end{rmk}

As an immediate consequence of the discussion above, we have
\begin{cor}
For any unimodular extended Poisson manifold $(\bX,H)$ of complex dimension $n$, 
we have \[ \homology{k}(\bX,H)\cong\cohomology{2n-k}(\bX,H) .\]
\end{cor}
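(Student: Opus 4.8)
The plan is to reduce both sides of the claimed isomorphism to Lie algebroid cohomology of $\dal_H$ and then play triviality of the coefficient module off against the Evens-Lu-Weinstein duality. First I would record the two translations. By Theorem~\ref{Thm:main1} the bundle isomorphism $\stdiso$ intertwines $\bdeesH$ with $\barpd+\supperlb{\pd}{\ii{H}}$, so the Koszul-Brylinski homology is exactly the Lie algebroid cohomology of $\dal_H$ with values in the module $\module$; that is, $\homology{k}(\bX,H)\cong\cohomology{k}(\dal_H,\module)$. On the other hand, by definition $\cohomology{2n-k}(\bX,H)=\cohomology{2n-k}(\dal_H,\CC)$ is the cohomology of $\dal_H$ with trivial coefficients.

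Next I would exploit unimodularity. By the preceding proposition, $(\bX,H)$ being unimodular means there is a nowhere vanishing $\omega\in\forms{n,0}=\sections{\module}$ with $\barpd\omega+\supperlb{\pd}{\ii{H}}\omega=0$. Transported through $\stdiso$ in degree $k=0$, where $\bdeesH s=\DDH s$, this says precisely that $\DDH\omega=0$, i.e. $\representation_\alpha\omega=0$ for all $\alpha$, so $\omega$ is a nowhere vanishing flat section and $\module\cong\CC$ as a $\dal_H$-module. Using the defining formula $\bdeesH(u\otimes\omega)=(\barpdH u)\otimes\omega+(-1)^k u\wedge\DDH\omega$, the flatness $\DDH\omega=0$ shows that $u\mapsto u\otimes\omega$ is an isomorphism of cochain complexes from $(\sections{\wedge\graded\lad},\barpdH)$ onto $(\sections{\wedge\graded\lad\otimes\module},\bdeesH)$. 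Hence $\cohomology{j}(\dal_H,\module)\cong\cohomology{j}(\dal_H,\CC)=\cohomology{j}(\bX,H)$ for every $j$; in particular $\homology{2n-k}(\bX,H)\cong\cohomology{2n-k}(\bX,H)$.

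Finally I would invoke the Evens-Lu-Weinstein duality. Since $\rk_\CC\dal_H=2n$ and $\module=Q_{\dal_H}^{\thalf}$, Theorem~\ref{thm:1} (equivalently Theorem~\ref{Beijing}) supplies a non-degenerate pairing $\homology{k}(\bX,H)\otimes\homology{2n-k}(\bX,H)\to\CC$, whence $\homology{k}(\bX,H)\cong\homology{2n-k}(\bX,H)$ as finite-dimensional vector spaces. Chaining this with the previous identification gives
\[ \homology{k}(\bX,H)\cong\homology{2n-k}(\bX,H)\cong\cohomology{2n-k}(\bX,H) ,\]
as desired.

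The step I expect to carry the real content, rather than bookkeeping, is the degree reversal $k\mapsto 2n-k$: module triviality by itself only yields $\homology{k}(\bX,H)\cong\cohomology{k}(\bX,H)$, and it is the Evens-Lu-Weinstein pairing that converts this into the stated Poincar\'e-type duality. Consequently the argument tacitly requires the hypotheses under which that pairing is non-degenerate and the groups are finite-dimensional, namely compactness of $\bX$ together with ellipticity of $\dal_H$ (surjectivity of $\altmap$); I would make these standing assumptions explicit. One should also verify that the self-duality of Theorem~\ref{thm:1}, being symmetric under $k\leftrightarrow 2n-k$, is compatible with the indexing $\homology{k}(\bX,H)=\cohomology{k}(\dal_H,\module)$ fixed above, a point the $H=0$ Calabi-Yau case confirms.
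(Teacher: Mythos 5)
Your argument is internally coherent, but it is not the paper's, and the difference is substantive: as you yourself observe, your route establishes the corollary only under the extra hypotheses that $\bX$ is compact and $\dal_H$ is elliptic, whereas the corollary is asserted for \emph{any} unimodular extended Poisson manifold and is presented as an immediate consequence of the unimodularity discussion alone, with no appeal to the duality pairing. The discrepancy originates in your identification $\homology{k}(\bX,H)\cong\cohomology{k}(\dal_H,\module)$. The degree reversal is already built into the indexing of the Koszul--Brylinski homology: the intended convention (visible in the remark computing $\homology{k}(\bX,\theta)\cong\oplus_{j-i=n-k}\cohomology{i,j}_\theta(\bX)$, and forced if one wants the differential $\barpd+\supperlb{\pd}{\ii{H}}$ to \emph{lower} the homological degree) is that $\homology{k}(\bX,H)$ is the cohomology of the summand $\bigoplus_{j-i=n-k}\forms{i,j}$, which under $\stdiso$ is $\sections{\wedge^{2n-k}\lad\otimes\module}$. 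Hence $\homology{k}(\bX,H)\cong\cohomology{2n-k}(\dal_H,\module)$, and the corollary follows from your (correct) middle step alone: unimodularity provides a nowhere vanishing flat section $\omega$ of $\module=\QAsHHalf$, the map $u\mapsto u\otimes\omega$ identifies the coefficient complex with the trivial-coefficient complex, and $\cohomology{2n-k}(\dal_H,\module)\cong\cohomology{2n-k}(\bX,H)$. No pairing, compactness, or ellipticity enters.

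To be fair, the displayed complex in the Definition is written as $\bigoplus_{i-j=n-k}\forms{i,j}$, which is what led you to the opposite convention; the two readings differ by the relabelling $k\leftrightarrow 2n-k$ and are reconciled exactly by the Evens--Lu--Weinstein duality you invoke. So in the compact elliptic regime your proof is valid and even yields the two separate statements $\homology{k}\cong\cohomology{k}$ and $\homology{k}\cong\homology{2n-k}$. As a proof of the corollary as stated, however, it has a genuine gap: take $H=0$ on a non-compact Calabi--Yau manifold such as $\CC^n$. Under your convention $\homology{0}$ would be the space of holomorphic $n$-forms while $\cohomology{2n}(\bX,0)=\cohomology{0,n}(\bX)=0$, so both the non-degeneracy of the pairing in Theorem~\ref{thm:1} and the conclusion itself fail; only the coefficient-trivialization argument, applied with the shifted indexing, survives. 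You should therefore replace the duality step by the observation that the shift $k\mapsto 2n-k$ is definitional, not dual.
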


\begin{bibdiv}
\begin{biblist}

\bib{MR0496157}{article}{
  author={Bayen, F.},
  author={Flato, M.},
  author={Fronsdal, C.},
  author={Lichnerowicz, A.},
  author={Sternheimer, D.},
  title={Deformation theory and quantization. I. Deformations of symplectic structures},
  journal={Ann. Physics},
  volume={111},
  date={1978},
  number={1},
  pages={61--110},
  issn={0003-4916},
  review={\MR{0496157 (58 \#14737a)}},
}

\bib{MR0496158}{article}{
  author={Bayen, F.},
  author={Flato, M.},
  author={Fronsdal, C.},
  author={Lichnerowicz, A.},
  author={Sternheimer, D.},
  title={Deformation theory and quantization. II. Physical applications},
  journal={Ann. Physics},
  volume={111},
  date={1978},
  number={1},
  pages={111--151},
  issn={0003-4916},
  review={\MR{0496158 (58 \#14737b)}},
}

\bib{MR2034924}{article}{
  author={Berest, Yuri},
  author={Etingof, Pavel},
  author={Ginzburg, Victor},
  title={Morita equivalence of Cherednik algebras},
  journal={J. Reine Angew. Math.},
  volume={568},
  date={2004},
  pages={81--98},
  issn={0075-4102},
  review={\MR{2034924 (2005f:16040)}},
}

\bib{math/0509284}{article}{
  title={Duality and equivalence of module categories in noncommutative geometry I},
  author={Block, Jonathan},
  eprint={math/0509284},
}

\bib{Bondal}{article}{
  author={Bondal, Alexi},
  title={Non-commutative deformations and Poisson brackets on projective spaces},
  journal={Max-Planck-Institut Preprint},
  year={1993},
  number={67},
}

\bib{MR2348030}{article}{
  author={Bressler, Paul},
  author={Gorokhovsky, Alexander},
  author={Nest, Ryszard},
  author={Tsygan, Boris},
  title={Deformation quantization of gerbes},
  journal={Adv. Math.},
  volume={214},
  date={2007},
  number={1},
  pages={230--266},
  issn={0001-8708},
  review={\MR{2348030 (2008m:53210)}},
}

\bib{Tsygan1}{article}{
  title={Lecture notes: twisted deformation quantization of algebraic varieties},
  author={Bressler, Paul},
  author={Gorokhovsky, Alexander},
  author={Nest, Ryszard},
  author={Tsygan, Boris},
  eprint={math/0701380},
}

\bib{Caine}{article}{
  title={Toric Poisson structures}
  author={Caine, Arlo},
  eprint={0910.0229},
}

\bib{MR1636473}{book}{
  author={Chevalley, Claude},
  title={The algebraic theory of spinors and Clifford algebras},
  note={Collected works. Vol. 2; Edited and with a foreword by Pierre Cartier and Catherine Chevalley; With a postface by J.-P. Bourguignon},
  publisher={Springer-Verlag},
  place={Berlin},
  date={1997},
  pages={xiv+214},
  isbn={3-540-57063-2},
  review={\MR{1636473 (99f:01028)}},
}

\bib{CP}{article}{
author={Cleyton, Richard},
author={Poon, Yat-Sun}
title={Differential Gerstenhaber algebras associated to nilpotent algebras}
eprint={0708.3442}
}

\bib{EL:flag}{article}{
    AUTHOR = {Evens, Sam}
     AUTHOR ={Lu, Jiang-Hua},
     TITLE = {Poisson harmonic forms, {K}ostant harmonic forms, and the
              {$S^1$}-equivariant cohomology of {$K/T$}},
   JOURNAL = {Adv. Math.},
  FJOURNAL = {Advances in Mathematics},
    VOLUME = {142},
      YEAR = {1999},
    NUMBER = {2},
     PAGES = {171--220},
      ISSN = {0001-8708},
     CODEN = {ADMTA4},
   MRCLASS = {53D17 (58A14 58H05)},
  MRNUMBER = {MR1680047 (2001e:53085)},
MRREVIEWER = {Christopher T. Woodward},
       DOI = {10.1006/aima.1998.1788},
       URL = {http://dx.doi.org/10.1006/aima.1998.1788},
}

\bib{MR1726784}{article}{
  author={Evens, Sam},
  author={Lu, Jiang-Hua},
  author={Weinstein, Alan},
  title={Transverse measures, the modular class and a cohomology pairing for Lie algebroids},
  journal={Quart. J. Math. Oxford Ser. (2)},
  volume={50},
  date={1999},
  number={200},
  pages={417--436},
  issn={0033-5606},
  review={\MR{1726784 (2000i:53114)}},
}

\bib{GualtieriThesis}{thesis}{
  title={Generalized complex geometry},
  author={Gualtieri, Marco},
  type={Ph.D. Thesis},
  organization={Oxford University},
  date={2003},
  eprint={math/0401221},
}

\bib{math/0703298}{article}{
  title={Generalized complex geometry},
  author={Gualtieri, Marco},
  eprint={math/0703298},
}

\bib{MR2013140}{article}{
  author={Hitchin, Nigel},
  title={Generalized Calabi-Yau manifolds},
  journal={Q. J. Math.},
  volume={54},
  date={2003},
  number={3},
  pages={281--308},
  issn={0033-5606},
  review={\MR{2013140 (2004h:32024)}},
}

\bib{MR1384750}{article}{
  author={Kashiwara, Masaki},
  title={Quantization of contact manifolds},
  journal={Publ. Res. Inst. Math. Sci.},
  volume={32},
  date={1996},
  number={1},
  pages={1--7},
  issn={0034-5318},
  review={\MR{1384750 (96m:58237)}},
}

\bib{KS1}{article}{
  title={Deformation quantization modules I: Finiteness and duality},
  author={Kashiwara, Masaki},
  author={Schapira, Pierre},
  eprint={0802.1245},
}

\bib{KS2}{article}{
  title={Deformation quantization modules II: Hochschild class},
  author={Kashiwara, Masaki},
  author={Schapira, Pierre},
  eprint={0809.4309},
}

\bib{MR2109686}{book}{
  author={Kodaira, Kunihiko},
  title={Complex manifolds and deformation of complex structures},
  series={Classics in Mathematics},
  edition={Reprint of the 1986 English edition},
  note={Translated from the 1981 Japanese original by Kazuo Akao},
  publisher={Springer-Verlag},
  place={Berlin},
  date={2005},
  pages={x+465},
  isbn={3-540-22614-1},
  review={\MR{2109686 (2005h:32030)}},
}

\bib{MR1855264}{article}{
  author={Kontsevich, Maxim},
  title={Deformation quantization of algebraic varieties},
  note={EuroConf\'erence Mosh\'e Flato 2000, Part III (Dijon)},
  journal={Lett. Math. Phys.},
  volume={56},
  date={2001},
  number={3},
  pages={271--294},
  issn={0377-9017},
  review={\MR{1855264 (2002j:53117)}},
}

\bib{MR2062626}{article}{
   author={Kontsevich, Maxim},
   title={Deformation quantization of Poisson manifolds},
   journal={Lett. Math. Phys.},
   volume={66},
   date={2003},
   number={3},
   pages={157--216},
   issn={0377-9017},
   review={\MR{2062626 (2005i:53122)}},
}

\bib{MR1362125}{article}{
  author={Kosmann-Schwarzbach, Yvette},
  title={Exact Gerstenhaber algebras and Lie bialgebroids},
  note={Geometric and algebraic structures in differential equations},
  journal={Acta Appl. Math.},
  volume={41},
  date={1995},
  number={1-3},
  pages={153--165},
  issn={0167-8019},
  review={\MR{1362125 (97i:17021)}},
}

\bib{LaurentLetter}{article}{
  author={Laurent-Gengoux, Camille},
  title={private communication},
}

\bib{MR2439547}{article}{
  author={Laurent-Gengoux, Camille},
  author={Sti{\'e}non, Mathieu},
  author={Xu, Ping},
  title={Holomorphic Poisson manifolds and holomorphic Lie algebroids},
  journal={Int. Math. Res. Not. IMRN},
  date={2008},
  pages={Art. ID rnn 088, 46},
  issn={1073-7928},
  review={\MR{2439547}},
}

\bib{LSX:adv}{article}{
  author={Laurent-Gengoux, Camille},
  author={Sti{\'e}non, Mathieu},
  author={Xu, Ping},
  title={Non abelian differential gerbes},
  journal={Adv. Math.},
  volume = {220},
  year = {2009},
  pages={1357-1427},
}

\bib{MR1472888}{article}{
  author={Liu, Zhang-Ju},
  author={Weinstein, Alan},
  author={Xu, Ping},
  title={Manin triples for Lie bialgebroids},
  journal={J. Differential Geom.},
  volume={45},
  date={1997},
  number={3},
  pages={547--574},
  issn={0022-040X},
  review={\MR{1472888 (98f:58203)}},
}

\bib{MR1262213}{article}{
  author={Mackenzie, Kirill C. H.},
  author={Xu, Ping},
  title={Lie bialgebroids and Poisson groupoids},
  journal={Duke Math. J.},
  volume={73},
  date={1994},
  number={2},
  pages={415--452},
  issn={0012-7094},
  review={\MR{1262213 (95b:58171)}},
}

\bib{MR1746902}{article}{
  author={Mackenzie, Kirill C. H.},
  author={Xu, Ping},
  title={Integration of Lie bialgebroids},
  journal={Topology},
  volume={39},
  date={2000},
  number={3},
  pages={445--467},
  issn={0040-9383},
  review={\MR{1746902 (2001b:53104)}},
}

\bib{arxivmath0702718}{article}{
    title = {Generalized Moser Lemma},
    author = {Sti\'enon, Mathieu},
    journal = {to appear in Trans. Amer. Math. Soc.},
    eprint = {math/0702718},
}

\bib{arXiv:0903.5065}{article}{
  author={Sti\'enon, Mathieu},
  title={Holomorphic Koszul-Brylinski Homology},
  eprint={0903.5065},
}

\bib{MR959095}{article}{
  author={Weinstein, Alan},
  title={Coisotropic calculus and Poisson groupoids},
  journal={J. Math. Soc. Japan},
  volume={40},
  date={1988},
  number={4},
  pages={705--727},
  issn={0025-5645},
  review={\MR{959095 (90b:58091)}},
}

\bib{arXiv:math/0601752}{article}{
  author={Weinstein, Alan},
  title={The Integration Problem for Complex Lie Algebroids},
  eprint={math/0601752},
}

\bib{MR1675117}{article}{
  author={Xu, Ping},
  title={Gerstenhaber algebras and BV-algebras in Poisson geometry},
  journal={Comm. Math. Phys.},
  volume={200},
  date={1999},
  number={3},
  pages={545--560},
  issn={0010-3616},
  review={\MR{1675117 (2000b:17025)}},
}

\bib{Y2}{article}{
  title={Central extensions of gerbes},
  author={Yekutieli, Amnon},
  eprint={0801.0083},
}

\bib{Y1}{article}{
  title={Lecture notes: twisted deformation quantization of algebraic varieties},
  author={Yekutieli, Amnon},
  eprint={0801.3233},
}

\end{biblist}
\end{bibdiv}
\end{document}